\documentclass[a4paper, 11pt]{amsart}

\usepackage[backend=bibtex,maxnames=9,hyperref=auto,style=alphabetic]{biblatex}
\usepackage{latex_base}
\usepackage{macros}

\usepackage{graphicx}

\definecolor{JuliaGray}{HTML}{444444}
\definecolor{JuliaGreen}{HTML}{389826}
\definecolor{JuliaRed}{HTML}{D73A49}
\definecolor{JuliaPurple}{HTML}{9558B2}
\definecolor{JuliaBlue}{HTML}{4063D8}

\lstdefinelanguage{Julia}%
{keywords={Framework,BodyHinge,BodyBar,Polytope,AngularFramework,FrameworkOnSurface,DeformationPath,animate,plot,DeformationPath_EdgeContraction,SphericalDiskPacking,SpherePacking,VolumeHypergraph,Dodecahedron},
	morekeywords={abstract,break,case,catch,const,continue,do,else,elseif,end,export,false,for,function,immutable,import,importall,if,in,macro,module,otherwise,quote,return,switch,true,try,type,typealias,%
		using,while},%
	sensitive=true,%
	alsoother={$},%
	morecomment=[l]\#,%
	morecomment=[n]{\#=}{=\#},%
	morestring=[s]{"}{"},%
	morestring=[m]{'}{'},%
}[keywords,comments,strings]%

\usepackage{listings}
\newcommand\juliastyle{\lstset{
		language=Julia,
		basicstyle=\fontsize{10.5}{9}\ttfamily\selectfont,
		tabsize=5,
		aboveskip=10pt,
		belowskip=10pt,   
		morekeywords={self},
		keywordstyle=\bfseries\ttfamily\color{JuliaPurple},
		emph={},
		emphstyle=\ttfamily\bfseries\color{JuliaRed},
		stringstyle=\ttfamily\color{JuliaGreen},
		commentstyle=\ttfamily\color{JuliaBlue},
		frame=tb,
		showstringspaces=false,
		columns=flexible
}}
\lstnewenvironment{julia}[1][frame=none,xleftmargin=.15in]
{
	\juliastyle
	\lstset{#1}
}
{}

\lstnewenvironment{juliafig}[1][]
{
	\juliastyle
	\lstset{#1}
}
{}

\newcommand\juliainline[1]{{\juliastyle\lstinline!#1!}}

\author[Matthias Adrian-Himmelmann]{Matthias Adrian-Himmelmann$^*$}
\thanks{$^*$Institute for Analysis and Algebra, Technische Universit\"at Braunschweig, Germany.}
\address{Matthias Adrian-Himmelmann, Technische Universit\"at Braunschweig, Institut f\"ur Analysis und Algebra, AG Algebra, Universit\"atsplatz 2, 38106 Braunschweig,
	Germany\medskip}

\email{matthias.himmelmann@tu-braunschweig.de}

\date{}

\subjclass[2020]{52-04, 52C25, 53B21, 65H14, 70B15}

\keywords{Rigidity Theory, Riemannian Geometry, Homotopy Continuation, Geometric Constraint Systems, Flexibility, Kinematics}

\title[Approximating Continuous Motions]{Approximating Continuous Motions of Geometric Constraint Systems}

\begin{document}
\maketitle

\begin{abstract}
The realization space of geometric constraint systems is given by the vanishing locus of polynomials corresponding to natural geometric constraints. Such geometric constraint systems arise in many real-world scenarios such as structural engineering and soft matter physics. When a geometric constraint system is flexible, it admits continuous motions. The ability to explicitly compute such continuous motions is essential for analyzing the constraint system's quasistatic or elastic properties. However, this task is computationally challenging, even for comparatively simple geometric constraint systems, making numerical strategies attractive.
    
In this article, we present a general numerical framework for approximating continuous motions of geometric constraint systems given by quadratic polynomials. Our approach combines Riemannian optimization with numerical algebraic geometry to construct continuous motions via the metric projection onto the constraint set. By using homotopy continuation, we ensure that the computed motions correspond to genuine solutions of the constraint system and avoid numerical artifacts such as path-jumping. To handle singularities and over-determined systems, we introduce theoretical enhancements including randomization, adaptive step size control and a second-order analysis. These methods are implemented in the Julia package \juliainline{DeformationPaths.jl}, which supports a broad class of geometric constraint systems and demonstrates its robust and effective performance across a wide range of test cases.
\end{abstract}

\section{Introduction}
Geometric constraint systems are used to describe geometric objects that come with natural constraints which can often be represented as polynomials. Such geometric constraints can, for instance, come in the form of distances, angles, coplanarity, or tangency and restrict the possible deformations of the object. A geometric constraint system is called \struc{rigid} if all admissible deformations of a given realization are ambient isometries. Otherwise, they are \struc{flexible}, signifying the existence of a \struc{continuous motion} which deforms the structure and satisfies the underlying constraints. The dual concepts of rigidity and flexibility are two sides of the same coin, both studied within the mathematical field of rigidity theory. Even for simple bar-joint frameworks given by embedded graphs with edge-length constraints, determining their rigidity in $\mathbb{R}^d$ is a coNP-hard problem for $d\geq 2$ \cite{generalizationkempeuniversality}, so deciding their flexibility is NP-hard. There is thus little hope that deciding the rigidity of a geometric constraint system with more general constraints is any easier. 

One of the most extensively studied geometric constraint systems are polytopes. These structures already fascinated ancient Greek philosophers such as Plato and Pythagoras. Accordingly, rigidity theory has its roots in the study of polytopes, reaching back to 1813, when Cauchy proved the rigidity of all three-dimensional simplicial convex polytopes \cite{cauchytheorem}. In 1864, Maxwell established a condition for infinitesimal rigidity based on the kernel of the corresponding rigidity matrix \cite{maxwellcount} that was used by Dehn to extend Cauchy's Theorem to polytopes with rigid faces \cite{dehnstheorem}. However, not all triangulated polytopes are rigid and Bricard showed in 1897 that there are flexible, non-convex polyhedra with infinitesimally rigid faces \cite{raoulbricard}. A realization of \struc{Bricard's octahedron} is depicted in Figure \ref{fig:framework-introductory-pic}(l.). 

\begin{figure}[h!]
	\centering
	\begin{minipage}{0.34\linewidth}
	\includegraphics[width=\linewidth]{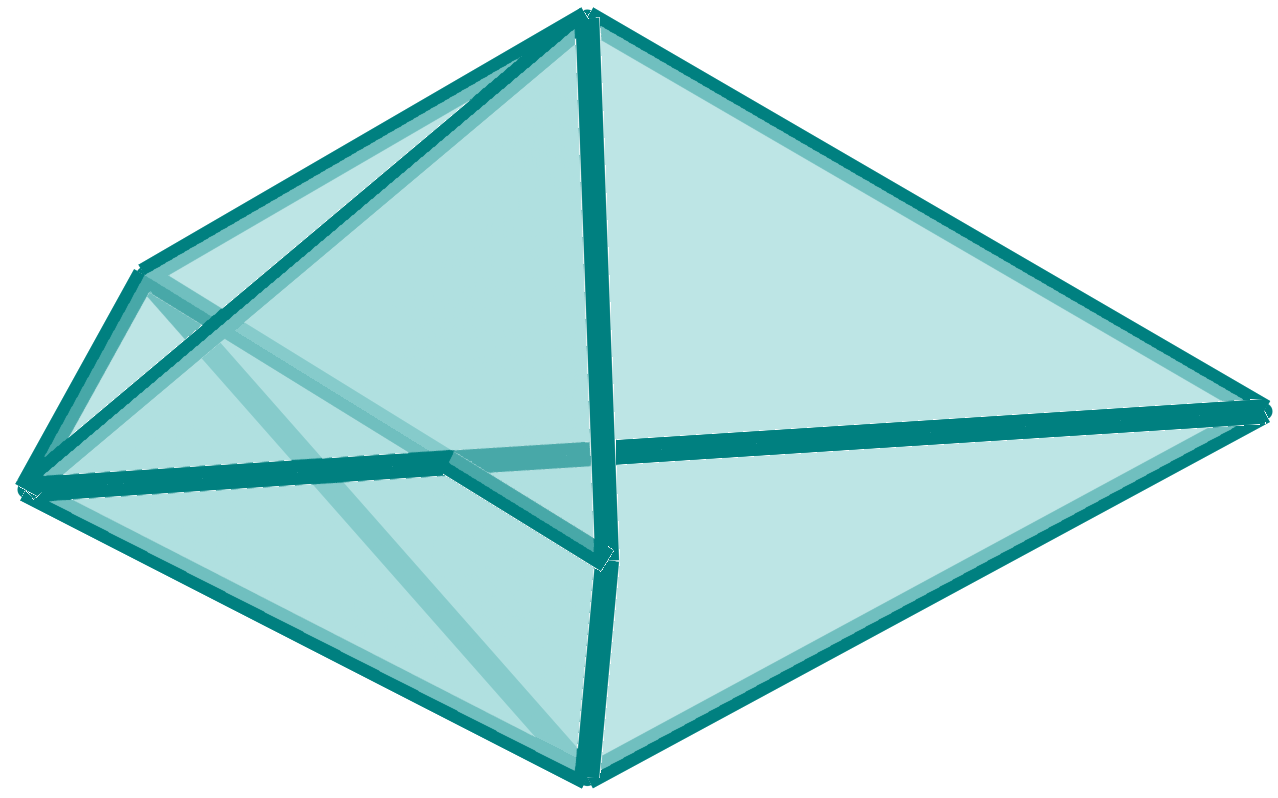}\\[6mm]
	\includegraphics[width=0.96\linewidth]{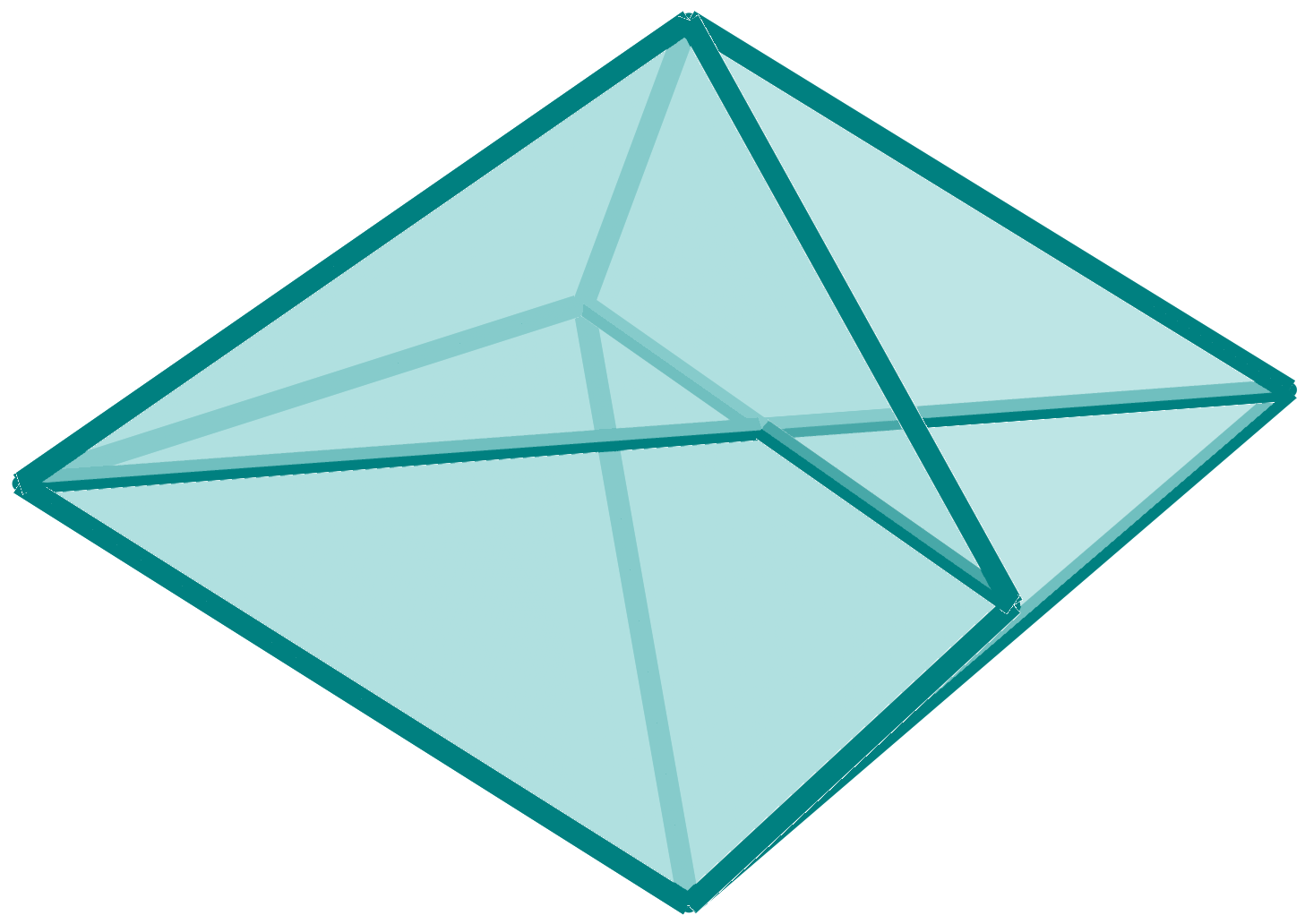}
\end{minipage}\hspace*{13mm}
\begin{minipage}{0.36\linewidth}
\includegraphics[width=\linewidth]{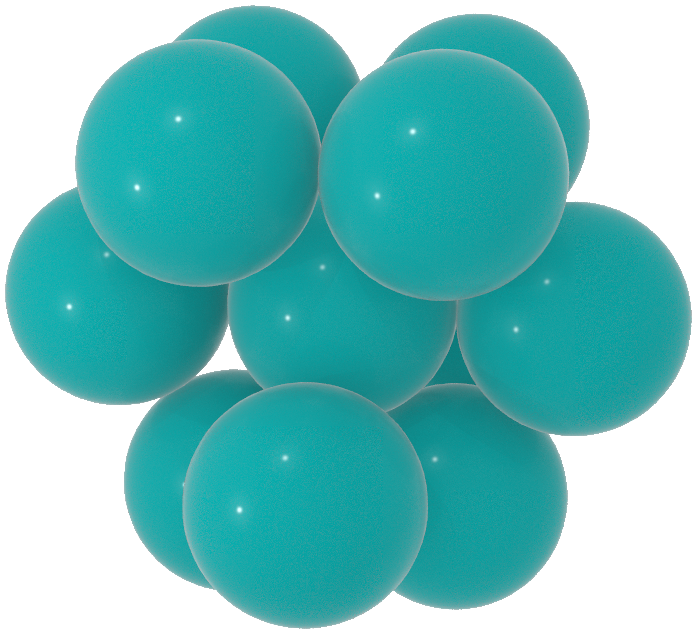}
\end{minipage}
	\caption{\textbf{(left)} Two flexible realization of the generically rigid octahedron, that can be continuously deformed into each other. \textbf{(right)} \textcolor{NiceBlue}{\textit{Hypostatic}} sphere packing, which is rigid, even though the number of contacts between spheres is lower than its nontrivial degrees of freedom (cf. \cite{almostrigidity}).}
	\label{fig:framework-introductory-pic}
\end{figure}

Nowadays, the applications of geometric constraint systems are diverse. Kempe's Universality Theorem \cite{kempesuniversality} shows that any bounded subset of an algebraic curve in $\mathbb{R}^2$ can be traced by an appropriate planar linkage. The possibility to reverse engineer a structure with only rigid bars that follows a prescribed mechanism makes such linkages a particularly compelling model for various natural phenomena. This principle can be used in the design of robot arms with targeted mobility \cite{10.1115/1.4031717}. When adding facet planarity constraints to simple frameworks, certain classes of polytopes are flexible, even though random realizations of convex polytopes are rigid \cite{himmelmannschulzewinter2025rigiditypolytopesedgelength}. Similar to the paradoxically flexible Bricard's octahedron, there are geometric constraint systems which are paradoxically rigid, meaning that they have fewer equations than degrees of freedom (e.g.\ Figure \ref{fig:framework-introductory-pic}(r.)). 

Moreover, geometric constraint systems can be used to study the behavior of granular material \cite{PhysRevLett.94.198001} or filament packings \cite{robustgeometricmodellingcylinderpackings}. Associated techniques can be used to study how viruses self-assemble \cite{viral_selfassembly}.
Accordingly, flexible configurations of geometric objects are studied by artists \cite{penrose-tiling-art}, have applications to quasicrystalline structures \cite{PhysRevX.9.021054}, and have the ability to generate exciting quasistatic properties such as \struc{auxeticity} \cite{geometricauxetics}, having implications for filtration and impact protection. 

For all of these reasons, the ability to explicitly compute continuous motions of geometric constraint systems is highly relevant. Related symbolic computational approaches are available and are actively developed (e.g.\ \cite{HPSchroecker,NACcolorings,pyrigi}). These approaches often rely on exploiting special properties of the system such as symmetry or only work on small examples. Since the underlying problem is NP-hard, the curse of dimensionality quickly becomes an issue. Consequently, numerical approximation strategies are often favored for larger systems. In this context, a specialized, certified curve-tracking approach for polynomial systems has been developed (cf. \cite{burr2025certifiedalgebraiccurveprojections}). Beyond that, local techniques for traversing geometric configuration spaces are available (e.g. \cite{HeatonHimmelmann,almostrigidity,XU2024112939}). 

However, such numerical approximation strategies are usually either not applicable to arbitrary polynomial systems or have other inherent drawbacks.
For instance, such methods often do not guarantee that a continuous motion cannot be explained by numerical inaccuracies in solving the underlying polynomial system and they do not guarantee that approximated motions do not jump between connected components.

\paragraph{Contribution} In this article, we combine the established strategies of Riemannian optimization and homotopy continuation to construct a method that tracks certified solutions along continuous curves. Riemannian geometry is concerned with investigating metric structures on smooth manifolds $\mathcal{M}$. This lets us compute angles, distances and in particular, closest connections between two points on a smooth constraint set known as \struc{geodesics}. We approximate such geodesics with retractions, which can be computed as the projection to the closest point: Starting from $p\in\mathcal{M}$ and a tangent vector $v\in T_p\mathcal{M}$, we take a linear tangent step $p+v$ that is orthogonally projected back to $\mathcal{M}$ (cf. \cite{AbsMal2012}). This approach provides a means to move on manifolds.

Finding the closest point on a manifold leads to a nonlinear optimization problem. By tracking the closest point on $\mathcal{M}$ from $p$ to $p+v$ using a homotopy continuation predictor-corrector scheme, we address both of the previous issues with numerical algorithms: First, the solutions of the polynomial constraint system can be certified using Smale's $\alpha$-theory (cf. \cite{smale-approximate-zero}), so if we find a solution we are sure that it is an actual solution and does not simply come from numerical inaccuracies. Second, the existing literature comes with heuristics and adaptive step size controls implemented to detect and subsequently avoid path-jumping (cf. \cite{robust_numerical_path_tracking, mixedprecisionpathtracking}).

The realization space of a geometric constraint system consists of all points that satisfy the geometric constraints. It is cut out by polynomials, implying that it is smooth almost everywhere. Consequently, we can locally interpret the realization space as a smooth manifold. Based on this observation, we implement the Julia package \juliainline{DeformationPaths.jl} \cite{deformationpaths} for approximating continuous motions by constructing the metric projection using homotopy continuation.

\begin{center}
	\vspace*{2.5mm}
		\begin{minipage}{0.05\linewidth}
			~
			\end{minipage}
	\begin{minipage}{0.21\linewidth}
		\includegraphics[width=1\linewidth]{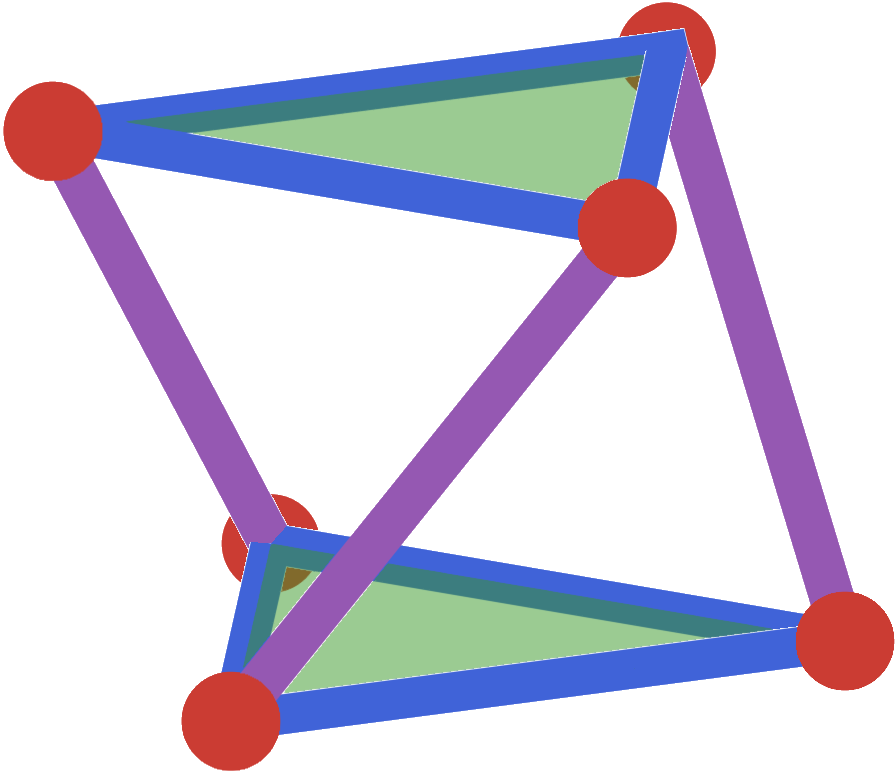}
	\end{minipage}\hspace*{2mm}
		\begin{minipage}{0.4\linewidth}
	 \LARGE\href{https://matthiashimmelmann.github.io/DeformationPaths.jl/}{\color{JuliaGray}\textsc{DeformationPaths.jl}}
	\end{minipage}
		\vspace*{2.5mm}
\end{center}

 The package name is derived from the term \struc{deformation path}, which -- particularly in applied contexts -- is used synonymously with continuous motions, although it is less common in theoretical work. In \cite{HeatonHimmelmann}, it has already been shown that this approach works well over smooth manifolds; however, over general algebraic constraint sets, it comes with several issues. In particular, homotopy continuation struggles with singular start points and over- or under-constrained systems. Therefore, in this article we develop additional techniques for handling these problems. 

Moreover, it is a priori not clear how to compare tangent vectors between different tangent spaces, so we introduce a vector transport in this setting and find unit-speed parametrizations via introducing variable step sizes to the Euclidean distance retraction.
The result is a general-purpose method, which works on arbitrary constraint sets defined by quadratic polynomials. We also show that this class of geometric constraint systems is relatively general and contains many popular and well-studied examples.

\paragraph{Outline} We begin by introducing the notion of a geometric constraint system in Section \ref{section:geometric-constraint-systems} and establish conditions for their rigidity and flexibility. Moreover, we introduce the notion of second-order rigidity, which is used as a sufficient condition for a system's rigidity. In the Appendix Section \ref{appendix:infrig}, we additionally prove several statements about these systems' rigidity properties by generalizing classical results.

In Section \ref{section:riemannian-geometry-numerical-AG}, we construct the Euclidean distance retraction on a general Riemannian manifold, leading to the implicit closest point problem in $\mathbb{R}^d$. Starting in a known solution on the manifold, we compute the Euclidean distance retraction using homotopy continuation, which is a predictor-corrector scheme for solving systems of polynomial equations. 

Section \ref{section:novel-approaches-approximating-deformation-paths} introduces several heuristics and techniques for tackling the issues associated with this approach, such as singularities and over- or under-constrained systems. In this section, we also develop the algorithmic details and briefly present the Julia package \juliainline{DeformationPaths.jl} which implements the Euclidean distance retraction alongside these additional techniques. In the Appendix Section \ref{section:examples-of-gcs} we provide details on this software package and how to use it on several classes of examples.  Finally, Section \ref{section:outlook} gives an outlook and highlights future resarch directions.

\section{Geometric Constraint Systems}
\label{section:geometric-constraint-systems}

Geometric constraint system are used to describe configurations of points, lines, polygons, hyperplanes or other geometric objects that must satisfy a certain set of prescribed relations. Such constraints typically encode distances, angles, incidences, tangencies, or planarity. In other words, geometric constraint system are defined on geometric primitives such as points, lines or rigid bodies and are specified by geometric relationships, such as distances, angles or incidences (cf. \cite{handbookgeometricconstraintsystems}). In this work, we only focus on the class of geometric constraint systems, which are defined on a set of points. This restriction makes it possible to represent geometric constraint systems as a simple and effective data type, while still allowing us to model a wide range of more general geometric constraint systems. As an example, polyhedral rigid bodies can be encoded by a framework on a complate graph on its vertices. Geometric constraint systems thus entail a wide range of objects and are studied in the mathematical areas of computer-aided design (CAD), rigidity theory, computational geometry and kinematics. Some of the key questions in this area are deciding whether a given geometric object is realizable in real space and, if so, whether it is rigid, or admits continuous motions. 

These objects find application in structural engineering and architecture, molecular biology and chemistry, materials modeling, formation control and robotics, sensor networks, and computer graphics. Even though geometric constraint systems are widely used, their definition is slightly vague. In this article, we introduce the following (specialized) definition of geometric constraint systems which will let us prove several theoretical results. 
\newpage
\begin{definition}
	\label{def:constraint-system-in-Rd}
	Given a list of vertices $V=[n]$, a map $p:V\rightarrow \mathbb{R}^d$ and a polynomial constraint set $g:\mathbb{R}^{dn}\rightarrow \mathbb{R}^m$ consisting of quadratic polynomials which are invariant under Euclidean isometries of $\mathbb{R}^d$, a \struc{$d$-dimensional geometric constraint system} $\mathcal{F}$ is a triple $(V,g,p)$. We call $p$ a \struc{realization in $\mathbb{R}^d$} if it satisfies the polynomial equations: $g\big(p(1),\, \dots ,\, p(n)\big)=0$. 
\end{definition}

Established geometric objects that fit this definition are bar-joint frameworks, sphere packings and polytopes. In Section \ref{section:examples-of-gcs}, many more examples are discussed and how they can be instantiated in the Julia package \juliainline{DeformationPaths.jl}, which has been developed for and alongside this article.

Note that this definition of geometric constraint systems slightly differs from the one provided by Sitharam, St. John and Sidman \cite{handbookgeometricconstraintsystems}. In our context, the specialization to quadratic polynomials and the generalization to polynomials that are invariant under Euclidean isometries will prove useful.
Accordingly, from our Definition \ref{def:constraint-system-in-Rd}, it stands out that $g$ is required to be invariant under Euclidean isometries. In other words, for any Euclidean transformation (translations, rotation, reflections) given by an orthogonal matrix $Q\in \mathbb{R}^{d\times d}$ and a vector $\tau\in \mathbb{R}^d$, it holds that
\begin{eqnarray*}
	g\left(Q\cdot p(1)+\tau,\, \dots ,\, Q\cdot p(n)+\tau\right)~=~g( p(1),\, \dots ,\, p(n)).
\end{eqnarray*}

If all possible deformations of a $d$-dimensional geometric constraint system $\mathcal{F}$ infinitesimally arise from Euclidean transformations, we call $\mathcal{F}$ \struc{rigid}.  
Definition \ref{def:constraint-system-in-Rd} has the immediate implication that investigating a geometric constraint system's rigidity is the same as checking whether the realization $p: V\rightarrow \mathbb{R}^d$ is a real-isolated solution of the polynomial system given by $g$ modulo the Euclidean transformations. In other words, there is no continuous curve through $p$ in $\mathbb{R}^{dn}$ that does not correspond to an isometry of $\mathbb{R}^d$. This expression slightly abuses notation by replacing the fixed vector of realizations $\left(p(1),\,\dots ,\, p(n)\right)\in\mathbb{R}^{dn}$ with ``$p$''. In the following, we distinguish the placement $p(i)$ of a vertex $i$ from a vector of coordinates $\Vector{p}_i=(p_{i1},\,\dots,\, p_{id})$, which we treat as the geometric constraint system's variables. 

It is generally not clear whether for a fixed system of (geometric) polynomial constraints $g$ there exist any realizations. Consider, for instance, a triangle with edge lengths $1,2,4$. The triangle inequality requires that $1+2\geq 4$, which is incorrect, so there cannot exist a realization with these edge lengths.

This example relates rigidity to the feasibility problem from real algebraic geometry. Our setting is considerably simpler, since in the definition of a geometric constraint system we assume that we are given a realization. Alternatively, we could also drop that assumption and compute a realization from the incomplete data $(V,g)$, which would make the problem we are facing significantly harder, since the feasibility problem is PSPACE-complete \cite{Canny:CSD-88-439}. 

On the one hand, Definition \ref{def:constraint-system-in-Rd} describes a class of geometric constraint systems and lets us characterize their rigidity. On the other hand, proving that a solution is real-isolated is complicated in general. For this reason, textbooks usually take a different approach to rigidity theory by considering stronger conditions (cf. \cite{handbookgeometricconstraintsystems,handbookdiscreteandcomputationalgeometryschulzewhiteley, combinatorialrigidity}). Consequently, the notion of congruent realizations is introduced in the following. 
\newpage
\begin{definition}
	\label{def:congruence-of-frameworks} 
	Consider two realizations $p,q:V\rightarrow \mathbb{R}^d$ of geometric constraint systems with the same vertex set $V$ and polynomial map $g$. They are called \struc{equivalent} if the polynomial equations are satisfied for both:
	\[g\big(p\big)~=~ g\big(q\big).\]
	The realizations $p$ and $q$ are called \struc{congruent}, if additionally all pairwise distances agree:
	\[\lvert\lvert p(i)-p(j)\lvert\lvert \,=\, \lvert\lvert q(i)-q(j)\lvert\lvert ~~~~\text{for all } \{i,j\}\in {[n]\choose 2}.\]
\end{definition}

Note that this definition is non-standard. If we assume that all polynomials in $g$ arise in the same way, then the typical approach is to investigate the corresponding $g$-stabiliser to determine congruent realizations (cf. \cite{g-rigidity}). However, our setting is more general. In Section \ref{section:examples-of-gcs}, we discuss how to adjust more general geometric constraint systems to our setting, whose space of congruent realizations is potentially larger than Definition \ref{def:congruence-of-frameworks} suggests. The definition of congruence that we give here is inspired by bar-joint frameworks on the complete graph and enables us to generalize several classical results with relative ease.

In $\mathbb{R}^d$, there are infinitely many equivalent realizations and the collection of all these realizations is called the \struc{realization space} and is given by $g^{-1}(0)$. Many are arise from Euclidean isometries composed of rotations and translations; they are characterized by congruent realizations and are called \struc{trivial}. Factoring out the Euclidean motions from the framework's realization space, one obtains its \struc{configuration space}. This notion paves the way for the definition of infinitesimal rigidity appearing in Schulze and Whiteley \cite{handbookdiscreteandcomputationalgeometryschulzewhiteley}, a property that is especially relevant in structural engineering.

\begin{definition}
	\label{def:infinitessimal-rigidity}
	Given a geometric constraint system $\mathcal{F}=(V,g,p)$ in $\mathbb{R}^d$ on a finite list of vertices, consider the polynomial system $g\big(p_{11},\,\dots,\, p_{1d},\, p_{21},\,\dots ,\, p_{nd}\big)=0$
	with variables \mbox{$\Vector{p} = \big(p_{jk}\,:\, j\in \{1,\dots, n\},\, k\in [d]\big)$} representing the realization $p:V\rightarrow \mathbb{R}^d$. We write $\Vector{p}_i$ for the variable coordinates of the vertex $i\in V$. The corresponding \struc{rigidity matrix} with $m$ rows and $dn$ columns can be described in terms of the system's Jacobian:
	\[R_\mathcal{F}(p) = \left(\frac{\partial g_i}{\partial p_{jk}} (p)\right)_{i\in \{1,\dots,m\},\,(j,k) \in [n]\times [d]} \in \mathbb{R}^{m\times d n }.\]
	Then, an \struc{infinitesimal flex} $\Vector{\dot{p}}$ is an element of $\ker{R_\mathcal{F}(p)}$ and an \struc{equilibrium stress} is an element of the cokernel of $R_\mathcal{F}(p)$. 
	We call an infinitesimal flex \struc{trivial} if it extends to an ambient isometry. In the case of $\mathbb{R}^d$, this means that for all vertices $i\in V$ we have $\Vector{\dot{p}}_i=S\cdot \Vector{p}_i+\tau$ for a skew-symmetric matrix $S\in \mathbb{R}^{d\times d}$ (a rotation) and a vector $\tau\in \mathbb{R}^d$ (a translation). If each infinitesimal flex is trivial, we call the framework \struc{infinitesimally rigid}; otherwise, it is \struc{infinitesimally flexible}.
\end{definition}

In particular, the infinitesimal Euclidean transformations lie in the kernel of the rigidity matrix by construction (see Lemma \ref{lem:trivial-flexes-lie-in-kernel}). Since we are not only interested in the infinitesimal flexibility of geometric constraint systems, but actually want to find deformations of the geometric object we are working with, we introduce the notion of a continuous motion.

\begin{definition}
	\label{def:continuous-motion}
	Let $\mathcal{F}=(V,g,p)$ be a $d$-dimensional geometric constraint system. A \struc{continuous motion} is a continuous map $\alpha: [0,1]\rightarrow \mathbb{R}^{dn}$ such that
	\begin{itemize}
		\item $\alpha(0)=p$;
		\item $\alpha(t)$ is a realization for all $t\in [0,1]$;
		\item $\alpha$ is right differentiable in $0$;
		\item $(V,g,p)$ and $(V,g,\alpha(t))$ are equivalent for every $t\in [0,1]$.
	\end{itemize}
	A motion is called \struc{trivial} if $p$ and $\alpha(t)$ are congruent for every $t\in [0,1]$. It is called \struc{nontrivial} otherwise.
	If there exists a neighborhood around $p$ in $\mathbb{R}^{dn}$ in which all equivalent realizations are congruent to $p$, then $\mathcal{F}$ is called \struc{rigid}. Otherwise, we call the geometric constraint system \struc{flexible}.
\end{definition}

The assumption that a continuous motion $\alpha$ is right-differentiable is, in fact, superfluous: In a sufficiently small neighborhood of $p$, all non-congruent points $q\in g^{-1}(0)$ can be joined to $p$ by analytic arcs according to Wallace \cite[Lem.\ 18.3]{algebraicapproximationcurves}. We use this fact in Lemma \ref{lem:existence-continuous-motion} to show that every flexible geometric constraint system possesses a nontrivial continuous motion, which is also analytic. Nevertheless, we retain the assumption in Definition \ref{def:continuous-motion} for the sake of convenience.

Let us now study the relationship between the rigidity and infinitesimal rigidity of geometric constraint systems. It is a classical result from rigidity theory that infinitesimally rigid objects are also rigid. This fact has already been proven for various geometric constraint systems such as bar-joint frameworks (see \cite{Asimow1978TheRO}) or point-hyperplane frameworks (see \cite{EJNSTW}). It turns out that the implicitation is also true in the more general setting of geometric constraint systems from Definition \ref{def:constraint-system-in-Rd}, as we demonstrate in the following theorem.

\begin{theorem}
	\label{thm:inf-implies-rig}
	Infinitesimal rigidity implies rigidity.
	\begin{proof}
		Let $\mathcal{F}=(V,g,p)$ denote a $d$-dimensional geometric constraint system with realization $p:V\rightarrow \mathbb{R}^d$ and assume that $\mathcal{F}$ is infinitesimally rigid. Given any continuous motion \mbox{$\alpha:[0,1]\rightarrow \mathbb{R}^{dn}$}, it is right differentiable in $0$ by definition. Applying the multivariate chain rule, we find that
		\[0 = g(\alpha(t))~~~\Rightarrow~~~ 0 = \frac{\partial}{\partial t^+} g(\alpha(t)) \big\lvert_{t=0} =  \left(R_\mathcal{F}(\alpha(t))\cdot \frac{\partial}{\partial t^+}\alpha(t)\right)\big\lvert_{t=0} = R_\mathcal{F}(p)\cdot \frac{\partial}{\partial t^+}\alpha(0),\]
		so $\frac{\partial}{\partial t^+}\alpha(0)$ lies in the kernel of $R_\mathcal{F}(p)$. By Lemma \ref{lem:rank-rigiditiy-matrix}, there exists an open neighborhood $U(p)$ of $p$ such that $U(p) \cap g^{-1}(0)$ defines a smooth $(\ell_p+1)(2d-\ell_p)$-dimensional manifold for $\ell_p=\dim(\mathrm{aff}(p(1),\dots,p(n)))$. The Euclidean motions on the point set $\{p(1),\dots,p(n)\}$ also define a smooth manifold of dimension $(\ell_p+1)(2d-\ell_p)/2$ and there exists a neighborhood of this manifold which is contained in $U(p)\cap g^{-1}(0)$ (cf. Lemma \ref{lem:rank-rigiditiy-matrix}).
		This means all continuous motions $\alpha$ ending sufficiently close to $p$ are necessarily trivial, implying that $\mathcal{F}$ is rigid.
	\end{proof}
\end{theorem}

\begin{example}
	\label{ex:infrig_rig}
	The converse of Theorem \ref{thm:inf-implies-rig} is not necessarily true. Consider the complete graph $K_3=\left(\{1,2,3\},\,\{12,23,13\}\right)$ with the collinear realization $p:V\rightarrow \mathbb{R}^2$ so that
	\[1\mapsto(0,0),~2\mapsto (1,0) \text{ and }3\mapsto(2,0).\]
	The corresponding bar-joint framework $\mathcal{K}_3=(K_3,p)$ has the rigidity matrix\setcounter{MaxMatrixCols}{20}
	\begin{eqnarray*}
		R_{\mathcal{K}_3}(p)=
		\begin{pmatrix}
			-1 & 0 & 1 & 0 & 0 & 0\\
			-2 & 0 & 0 & 0 & 2 & 0\\
			0 & 0 & -1 & 0 & 1 & 0
		\end{pmatrix},
	\end{eqnarray*}
	which has rank $2$, so $\mathcal{K}_3$ is not infinitesimally rigid by Lemma \ref{lem:rank-rigiditiy-matrix}. Nevertheless, we can factor out the trivial translations by pinning the first vertex to the origin and the trivial rotations by fixing the direction of the edge $\{1,2\}$ to the $x$-axis. Doing so completely pins the edge in place and thus fixes the position of the vertices $1$ and $2$. If we assume that vertex $3$ has the variable position $(x,y)$, then this leaves us with the two edge-length equations
	\begin{eqnarray*}
		x^2+y^2=4\\
		(x-1)^2+y^2=1
	\end{eqnarray*}
	When solving this polynomial system over $\mathbb{R}^2$, we find only one solution $(x,y)=(2,0)$, which demonstrates that $\mathcal{K}_3$ is rigid, despite not being infinitesimally rigid.
\end{example}

Together, Theorem \ref{thm:inf-implies-rig} and Example \ref{ex:infrig_rig} show that examining the kernel of the rigidity matrix provides a simple sufficient condition for the rigidity of a geometric constraint system; however, this condition is not necessary.

\begin{remark}
	\label{remark:quadraticpolynomials}
	Up until this point, the assumption that the constraint system only consists of quadratic polynomials from Definition \ref{def:constraint-system-in-Rd} has not been used. In fact, Theorem \ref{thm:inf-implies-rig} and the theoretical results from Appendix Section \ref{appendix:infrig} hold without it. Yet, the assumption ensures that the rigidity matrix only consists of linear forms, which facilitates the computations in Section \ref{section:novel-approaches-approximating-deformation-paths} and makes the usage of cotangent vectors in the next section (here called ``equilibrium stresses'') significantly more meaningful. It is unclear how to extend the associated results for higher-order rigidity to arbitrary polynomials.
\end{remark}

\subsection{Higher-Order Rigidity}
\label{section:higher-order-rigidity}
Even though the main goal of this article is to compute continuous motions of geometric constraint systems, it is worthwhile to introduce the notion of higher-order rigidity. Doing so will allow us to more accurately describe the local geometry of the configuration space than the simple first-order analysis of the infinitesimal flexes.

The first-order analysis of geometric constraint systems is necessarily incomplete. Even for simple bar-joint frameworks given by embedded graphs equipped with Euclidean edge-length constraints, the converse of Theorem \ref{thm:inf-implies-rig} is not true in general, as can be seen in Example \ref{ex:infrig_rig} and Figure \ref{fig:inf-rigid-prestress-stable-flexible}.

\begin{figure}[h!]
	\centering
	\raisebox{3.2em}{
		\includegraphics[width=0.315\textwidth]{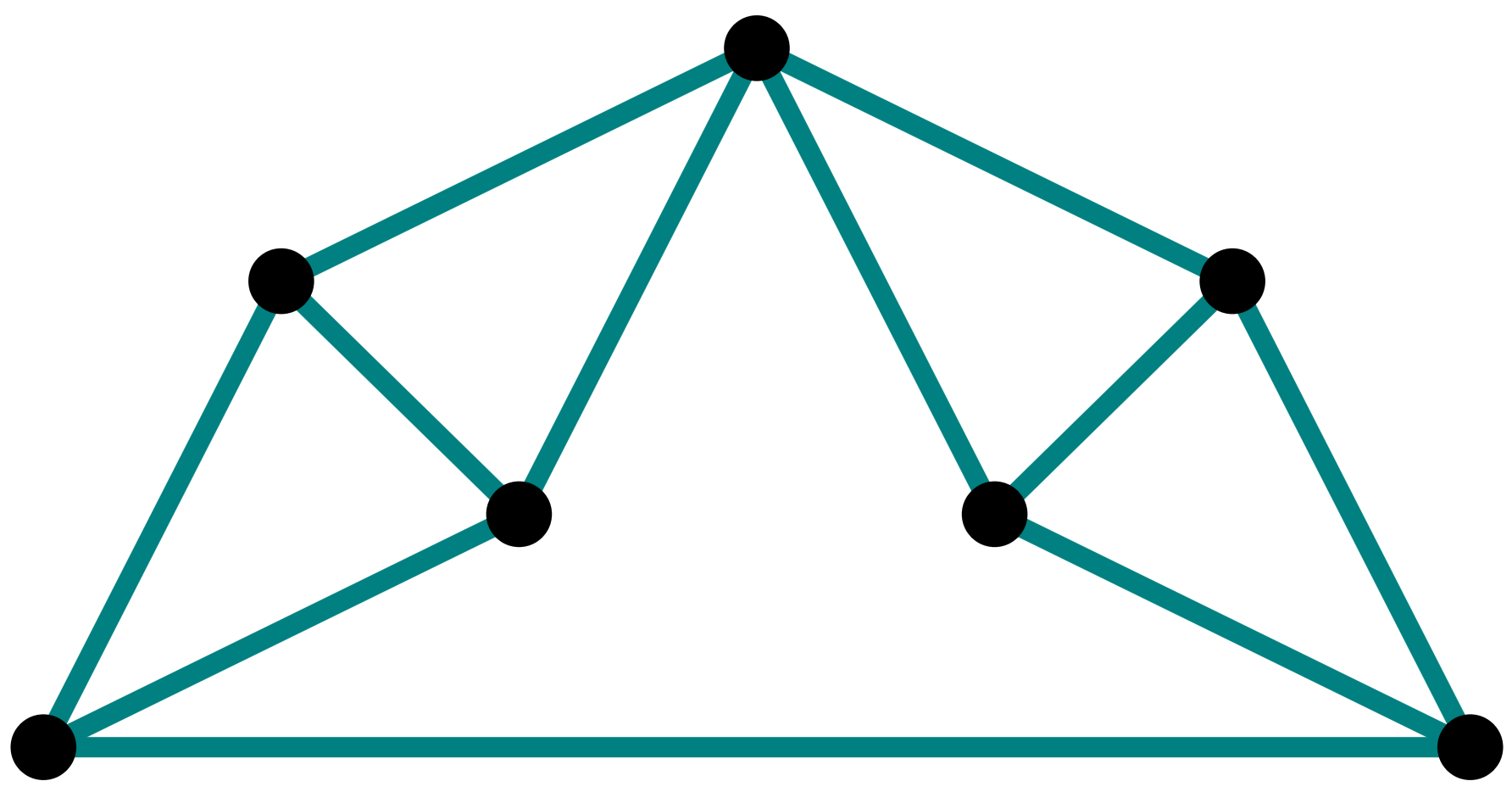}
	}
	\includegraphics[width=0.285\textwidth]{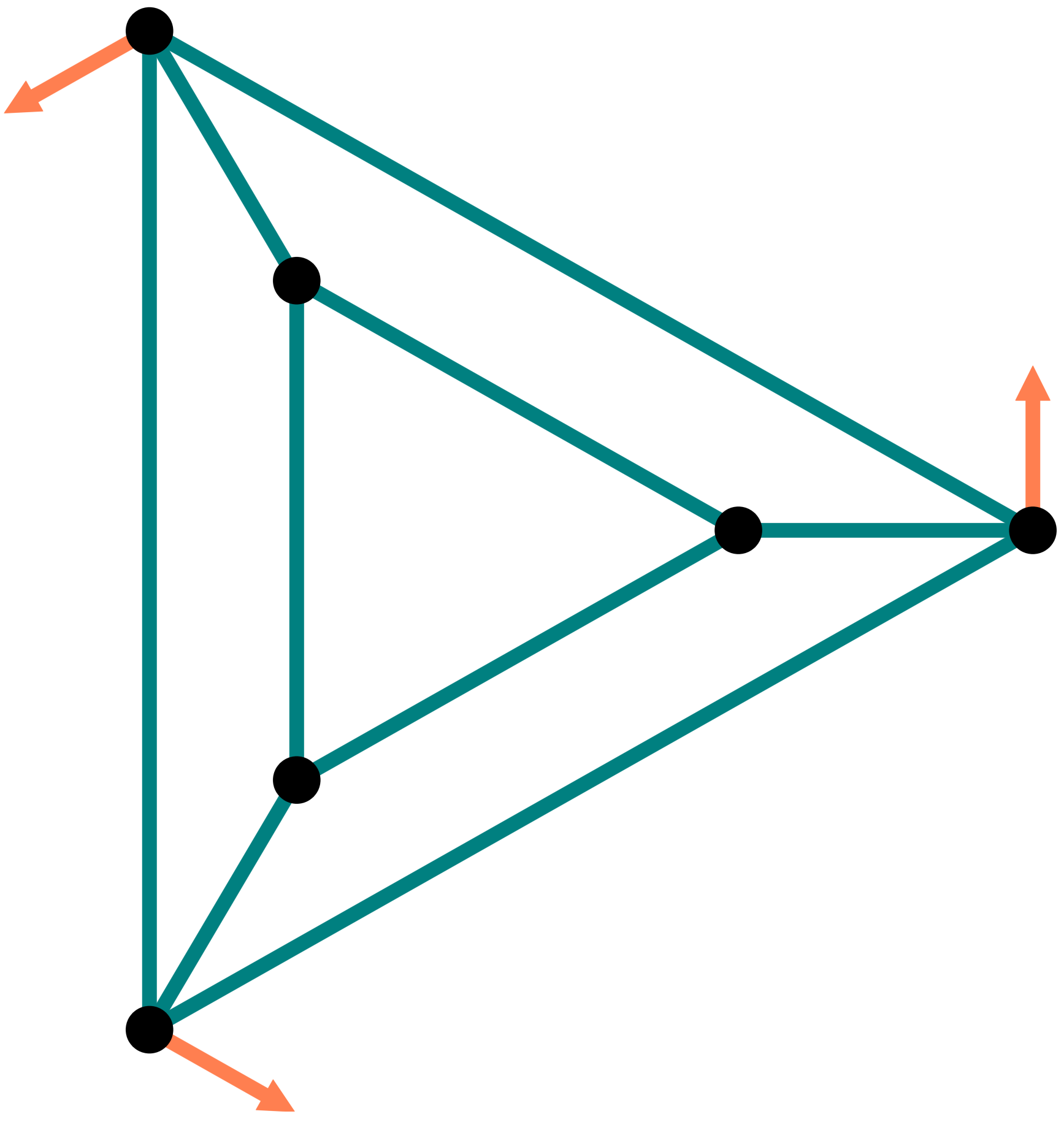}
	\hspace*{4mm}
	\raisebox{3em}{
		\includegraphics[width=0.315\textwidth]{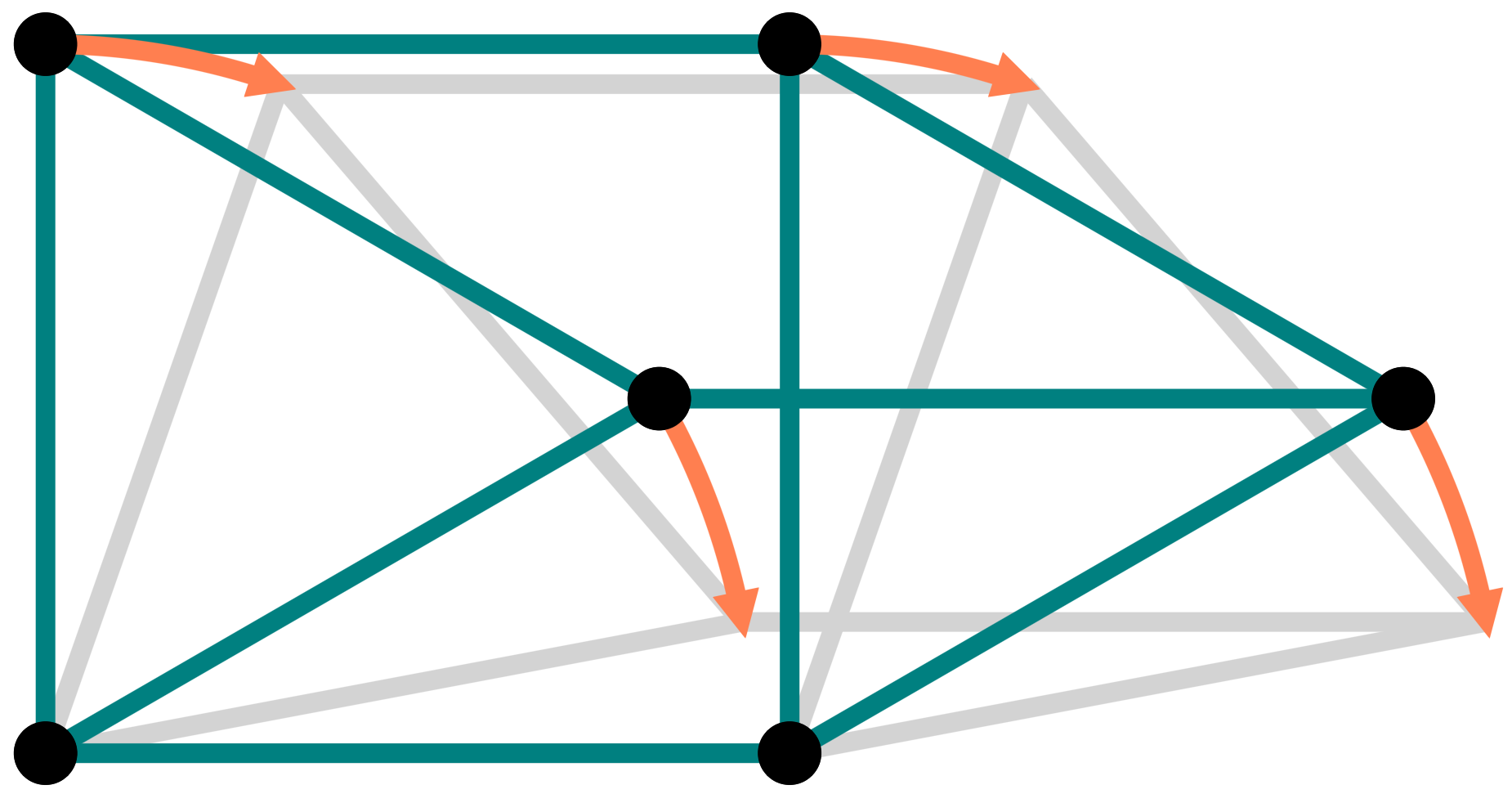}
	}
	\caption{\textbf{(left)} An infinitesimally rigid framework, \textbf{(center)} a locally rigid framework that is not infinitesimally rigid and \textbf{(right)} a flexible framework. The bars of these frameworks are depicted in teal. The coral arrows indicate an infinitesimal flex and the gray shadow depicts a non-congruent realization.}
	\label{fig:inf-rigid-prestress-stable-flexible}
\end{figure}

Robert Connelly was able to extend the linear notion of infinitesimal rigidity to the weaker concept of higher-order rigidity in 1980 by using equilibrium stresses \cite{secondorderrigiditycabled,higherorderrigidity}. Still, this concept only gained traction recently, mainly because relevant examples for which the first-order analysis is insufficient were lacking. Such geometric constraint systems have since been added in the form of classical bar-joint frameworks \cite{secondorderrigiditycabled} polytopes with edge length and coplanarity constraints \cite{himmelmannschulzewinter2025rigiditypolytopesedgelength} and coned 1-skeleta of convex polytopes \cite{energiesonpolytopes}. This leads us to the definition of second-order rigidity (cf. \cite{prestressstabilitypolyhedra}).

\begin{definition}
	\label{def:stress-and-second-order-rigidity}
	Given a geometric constraint system $\mathcal{F}=(V,g,p)$, we call the pair $(\Vector{\dot{p}},\,\Vector{\ddot{p}})$ a \struc{second-order flex} if 
	\begin{align*}
		R_\mathcal{F}(p)\cdot \Vector{\dot{p}}=0 &~~~~~\text{and}\\
		R_\mathcal{F}(\Vector{\dot{p}})\cdot \Vector{\dot{p}} +R_\mathcal{F}(p)\cdot \Vector{\ddot{p}} =0&~
	\end{align*}
	In other words, $\Vector{\dot{p}}$ is an infinitesimal flex of $\mathcal{F}$. A second-order flex is \struc{nontrivial} if $\Vector{\dot{p}}$ is a nontrivial infinitesimal flex. If there is no nontrivial second-order flex, we call $\mathcal{F}$ \struc{second-order rigid}.
\end{definition}

If there exists a nontrivial second-order flex $(\dot{p},\ddot{p})$, then for any equilibrium stress $\omega$ it automatically holds that

\[
	0 ~=~ \omega \cdot \left( R_\mathcal{F}(p)\cdot\Vector{\ddot{p}} + R_\mathcal{F}(\Vector{\dot{p}})\cdot\Vector{\dot{p}}\right) ~=~ \omega \cdot R_\mathcal{F}(p)\cdot\Vector{\ddot{p}} + \omega \cdot R_\mathcal{F}(\Vector{\dot{p}})\cdot\Vector{\dot{p}}
	~=~ \omega \cdot R_\mathcal{F}(\Vector{\dot{p}})\cdot\Vector{\dot{p}}
\]
since $\omega \in \text{coker}(R_\mathcal{F}(p))$ by definition. Conversely, if for a given infinitesimal flex $\Vector{\dot{p}}$ there exists an equilibrium stress $\omega$ such that $\omega^\top \cdot R_\mathcal{F}(\Vector{\dot{p}})\cdot \Vector{\dot{p}}\neq 0$, then there cannot exist a $\Vector{\ddot{p}}$ such that $(\Vector{\dot{p}},\,\Vector{\ddot{p}})$ is a second-order flex. We call such an $\omega$ a \struc{blocking stress} for $\Vector{\dot{p}}$ and we call $\Vector{\dot{p}}$ a \struc{blocked flex}. This approach will be used in Section \ref{section:novel-approaches-approximating-deformation-paths} to rule out certain nontrivial infinitesimal flexes.

\begin{remark}
	Notably, the proof that second-order rigidity implies rigidity is omitted here for the sake of brevity. This implication has been shown to hold for various geometric constraint systems such as bar-joint frameworks \cite{secondorderrigiditycabled}, disk packings \cite{stickydiskrigidity} and polytopes \cite{secondorderpolytoperigid}. The proof typically requires that the underlying constraint system only consists of quadratic polynomials, which rationalizes the presence of this assumption in Definition \ref{def:constraint-system-in-Rd}. It is unclear, whether it is possible to extend this result to arbitrary geometric constraint systems. 
	
	A different interpretation of the condition for second-order flexes is given by the second-order Taylor expansion of the expression $g(\alpha(t))$ in $t=0$ in terms of the variable $t$ for a nontrivial motion $\alpha(t)$. The condition for infinitesimal flexes (cf. Definition \ref{def:infinitessimal-rigidity}) can analogously be derived as a first-order Taylor expansion. For quadratic polynomials, the derivative of $R_\mathcal{\mathcal{F}}(\alpha(t))$ takes a particularly convenient form, as the rigidity matrix only consists of linear forms. Nevertheless, the hope that the second-order condition can be extended to more general classes of geometric constraint system as well is not unreasonable, provided that we can control the behavior of the rigidity matrix. However, the theoretical derivation is left for future research and in this article we only use Definition \ref{def:stress-and-second-order-rigidity} as a heuristic instead of a hard theoretical fact.
\end{remark}

\section{Riemannian Geometry and Numerical Algebraic Geometry}
\label{section:riemannian-geometry-numerical-AG}
In this section, we introduce the general ideas from Riemannian optimization that lead to the construction of the Euclidean distance retraction, which lets us approximate continuous motions. In this particular section, we closely follow \cite{HeatonHimmelmann}, where the Euclidean distance retraction is constructed using homotopy continuation. Contrary to the approach taken in that article, we intend to apply these techniques to approximate continuous motions instead of solving nonlinear optimization problems. As we will later see, there are several distinguishing factors between these two settings, that provide advantages as well as pitfalls. For instance, particular types of singularities that arise in geometric constraint systems need to be treated with caution (cf. Section \ref{section:singularities}), it becomes necessary to move along the constraint set with fixed speed (cf. Section \ref{section:vector-transport}) and blocking stresses can help in computing feasible tangent vectors (cf Section \ref{section:blocked-flexes}). All of these concepts are highly relevant in this article, yet they have not been discussed in \cite{HeatonHimmelmann}. In addition, we briefly describe the functionality as used in our Julia package \juliainline{DeformationPaths.jl}.

\subsection{Euclidean Distance Retraction}
\label{section:Euclidean-distance-retraction}
 As per Definition \ref{def:constraint-system-in-Rd}, geometric constraint systems are given by polynomial constraints which cut out a real algebraic variety. Hence, they may contain singularities. A variety's collection of regular points forms a manifold and the set of singularities again defines an algebraic variety \cite[p.\ 26f.]{nonlinearalgebraandapplications}. According to Hartshorne \cite[Thm.\ 5.3]{hartshorne}, the singular locus is a proper closed subset. Thus, for irreducible varieties $V$ it holds that $\dim \text{Sing} (V)<\dim V$.

Hence we can assume that -- at least locally -- the configuration space of a geometric constraint system is a smooth manifold. To compute angles and distances and to define the concept of a geodesic, we give the realization space a metric structure. A \struc{Riemannian manifold} is a pair of a smooth manifold $\mathcal{M}$ and a \struc{Riemannian metric} $g_p: T_p\mathcal{M}\times T_p\mathcal{M}\rightarrow \mathbb{R}$ that varies smoothly and assigns to each $p\in \mathcal{M}$ a positive-definite symmetric bilinear form. Since the configuration space of geometric constraint systems are naturally embedded in $\mathbb{R}^{dn}$, they inherit the Euclidean \struc{pullback metric} as a submanifold \cite[Lem.\ 2.11]{lee2018}. 

Ultimately, our goal is to compute (mostly) smooth trajectories in the configuration space of geometric constraint systems.
To be physically meaningful, these paths should follow closest connections between two configurations, i.e., \struc{geodesics}. Otherwise, additional energy would be required, which is a priori not available in our system. This relates the problem of computing continuous motions to finding geodesics of an embedded submanifold.

Even for submanifolds of Euclidean space that are defined by a single sparse polynomial, exactly computing geodesics is a nontrivial task.
In most cases, the corresponding second-order differential equation cannot be solved analytically. Approximating geodesics numerically is also undesirable, since this differential equation is often stiff and even sophisticated numerical approximation schemes may diverge. Consequently, we relax our goal to constructing a second-order approximation for geodesics.

\begin{definition}\label{def:retraction}
Let $\mathcal{M} \subset \mathbb{R}^{dn}$ be a manifold and $p \in \mathcal{M}$ a point. A \struc{(first-order) retraction} at $p$ is a smooth map $R_p:T_p \mathcal{M} \rightarrow \mathcal{M}$ from the tangent space $T_p\mathcal{M}$ to the manifold $\mathcal{M}$ satisfying
\[R_p(0) = p~\text{ and }~d(R_p)(0):T_p \mathcal{M} \to T_p \mathcal{M} \text{ is the identity map}.\]
A \struc{(local) retraction at p} is a retraction defined in some neighborhood of $0 \in T_p \mathcal{M}$.
A retraction is called \struc{second-order} if, in addition, 
\begin{equation*}
    \frac{d^2}{dt^2} R_p(tv) \big |_{t=0} \in N_p \mathcal{M}
\end{equation*}
for all $\Vector{v} \in T_p \mathcal{M}$ and the Euclidean normal space $N_p\mathcal{M}=\{\Vector{n}\in \mathbb{R}^{dn}\,:\,\langle \Vector{n},\, \Vector{v}\rangle=0~\forall \Vector{v}\in T_p\mathcal{M}\}$.
\end{definition}

According to Absil and Malick \cite{AbsMal2012}, finding the \struc{closest point} with respect to Euclidean distance defines a local, second-order retraction on any submanifold of Euclidean space, which we call the \struc{Euclidean distance retraction}.

\begin{theorem}[cf. \cite{AbsMal2012}]\label{thm:MAIN:euclidean-distance-is-a-retraction}
	Let $\mathcal{M} \subset \R^{dn}$ be a smooth manifold or real algebraic variety. For any smooth point $p \in \mathcal{M}$, define the relation $R_p \subset T_p \mathcal{M} \times \mathcal{M}$ by
	\begin{equation*}
		R_p = \{ (v, u) \in \R^{dn} \times \R^{dn} \,\, : \,\, u \in \text{arg}\min_{y \in \mathcal{M}} ||p + v - y|| \}.
	\end{equation*}
	There exists a neighborhood $U$ of $0$ in $T_p \mathcal{M}$ such that $R_p$ defines a local, second-order retraction. 
\end{theorem}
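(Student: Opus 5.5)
The plan is to reduce to the smooth case locally. Near a smooth point $p$ of a real algebraic variety $\mathcal{M}$, there is an open ball $B$ around $p$ with $B\cap\mathcal{M}$ an embedded $C^\infty$ (indeed analytic) submanifold of dimension $k$. Tubular neighborhood theory then gives $\varepsilon>0$ such that every $x$ with $\|x-p\|<\varepsilon$ has a unique nearest point $\pi(x)$ in $\mathcal{M}$. This point lies in $B$, because $\|\pi(x)-p\|\le 2\|x-p\|$, and the map $\pi$ is smooth there. We then set $U=\{v\in T_p\mathcal{M}:\|v\|<\varepsilon\}$ and $R_p(v)=\pi(p+v)$. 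On $U$ the relation is the graph of a single-valued smooth map, because points of $\mathcal{M}$ outside $B$ are far away. Concretely, we shrink $\varepsilon$ so that $2\varepsilon<\operatorname{dist}(p,\mathcal{M}\setminus B)$, which is possible since $\mathcal{M}$ is closed.

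The smoothness of $\pi$ comes from the implicit function theorem. Write $\mathcal{M}\cap B=\{h=0\}$ with $h:B\to\mathbb{R}^{dn-k}$ a submersion. The closest point $y=\pi(x)$ is characterized by $h(y)=0$ and $x-y\in N_y\mathcal{M}=\operatorname{im}Dh(y)^\top$. Equivalently, there is a unique $\lambda$ with $x-y-Dh(y)^\top\lambda=0$ and $h(y)=0$. We linearize this system in $(y,\lambda)$ at $(x,y,\lambda)=(p,p,0)$. The resulting matrix is $\begin{pmatrix} I & Dh(p)^\top\\ Dh(p) & 0\end{pmatrix}$, which is invertible because $Dh(p)$ has full row rank. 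This gives smoothness of $\pi$ and local uniqueness of critical points. Combining this with the global separation above shows that the minimizer is exactly this critical point.

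The retraction properties follow directly. We have $R_p(0)=\pi(p)=p$. Differentiating $\pi$ at $p$ gives the orthogonal projection onto $T_p\mathcal{M}$, since the linearized system kills normal components. Hence $dR_p(0)v=v$ for tangent $v$. For the second-order property, let $c(t)=R_p(tv)$. By construction $p+tv-c(t)\in N_{c(t)}\mathcal{M}$, so $\langle p+tv-c(t),w(t)\rangle=0$ for every smooth tangent field $w(t)\in T_{c(t)}\mathcal{M}$ with $w(0)=w$. Differentiating twice at $t=0$ and using $c(0)=p$ and $c'(0)=v$ gives the following. The first derivative of $p+tv-c(t)$ vanishes at $0$, and so does its value. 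The only surviving term is $-\langle c''(0),w\rangle=0$. Hence $c''(0)\in N_p\mathcal{M}$.

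The main obstacle is the variety case. For general $y$, the $\arg\min$ over all of $\mathcal{M}$ could be attained at singular points or at far-away branches. Our plan is to handle this purely by the separation estimate $\|\pi(x)-p\|\le 2\|x-p\|$ together with shrinking $\varepsilon$ below the distance from $p$ to $\mathcal{M}\setminus B$. This avoids any analysis of singular points, since $p$ is assumed smooth.
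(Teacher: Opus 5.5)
Your argument is correct. It is essentially the standard argument behind the citation: the paper gives no proof of its own and defers to \cite{AbsMal2012}, whose reasoning you reproduce. That reasoning is the implicit function theorem on the Lagrange system, giving a smooth nearest-point map whose derivative at $p$ is the orthogonal projection onto $T_p\mathcal{M}$, followed by differentiating the normality of $p+tv-c(t)$ twice; your localization via $\|\pi(x)-p\|\le 2\|x-p\|$ handles the variety case. One small correction: for a non-closed embedded manifold you need local closedness of $\mathcal{M}$ near $p$, not closedness, to guarantee that a minimizer exists. The bound $\mathrm{dist}(p,\mathcal{M}\setminus B)>0$ holds automatically anyway, since $B$ is a ball centered at $p$.
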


The Euclidean distance retraction $R_p$ finds application in Riemannian optimization schemes over general, implicitly defined manifolds (cf. \cite{AbsMal2012, boumal2020intromanifolds,HeatonHimmelmann}). Two exemplary Euclidean distance retractions on a torus of revolution are depicted in Figure \ref{fig:retractionontorus}.

\begin{figure}[h!]
	\centering
	\includegraphics[width=0.51\linewidth]{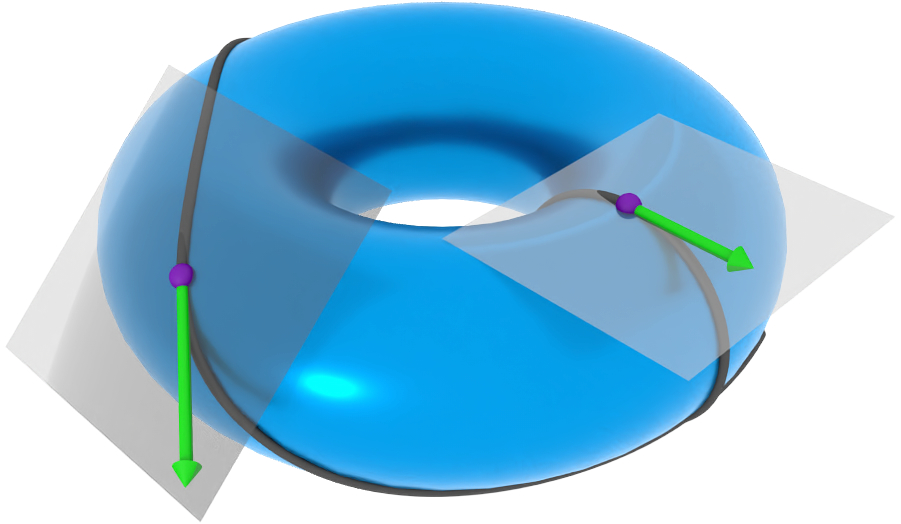}
	\caption{Two Euclidean distance retractions on the torus.}
	\label{fig:retractionontorus}
\end{figure}

The closest point problem of a parametric $u\in \mathbb{R}^{dn}$ to the manifold $\mathcal{M}=g^{-1}(0)\subseteq \mathbb{R}^{dn}$ with respect to Euclidean distance gives rise to a constrained optimization problem. Using the variables $(x,\,\lambda)\in \R^{dn}\times \R^m$ for the number of equations $m$, we can formulate it as the Lagrange multiplier problem

\[\mathcal{L}(x,\lambda; u) = \frac{1}{2}\sum_{i=1}^{dn} (x_i-u_i)^2 + \lambda^T\cdot g(x).\]
The zeros of $\mathcal{L}$'s gradient describe the critical points of the underlying optimization problem \cite[Thm. 12.1]{Nocedal2006}, which is encoded by the polynomial system $0=\nabla_{x,\lambda}\mathcal{L}(x,\lambda;u)$ generated by differentiating $\mathcal{L}$ with respect to $x$ and $\lambda$.

Apart from special cases such as matrix manifolds \cite{AbsMal2012}, the Euclidean distance retraction does not admit an explicit formula. Thus, it is desirable to develop \mbox{techniques} for numerically approximating the Euclidean distance retraction on arbitrary, implicitly defined manifolds, which typically arise in the context of geometric constraint systems. Contrary to geodesics, where numerical approximation techniques may fail due to the stiffness of the underlying second-order differential equations, the specific setting of our problem gives us access to a technique called \struc{homotopy continuation}, which allows us to track parametrized homotopies across $\mathcal{M}$. The connection between these two areas has already been established in \cite{HeatonHimmelmann}.

\subsection{Homotopy Continuation}
\label{section:Homotopy-Continuation}

In this section, we introduce a path-tracking method called \struc{homotopy continuation}. By following the solutions $x(t)$ of a parametrized polynomial system $G(x(t);u(t))=0$ with $t\in [0,1]$ from the parameter $u(1)$, where the solutions $x(1)$ are known, to $u(0)$. This procedure yields the previously unknown solution $x(0)$. Tracking the homotopy $G(x,u(t)):\mathbb{R}^{dn}\times [0,1]\rightarrow \mathbb{R}^{dn}$ is reliant on a combination of a predictor step (e.g.\ Euler's method) and a corrector step (e.g.\ Newton's method). In Figure \ref{fig:predictor-corrector}, a schematic representation of the homotopy continuation predictor-corrector algorithm is depicted.

\begin{figure}[h!]
	\centering
	\includegraphics[width=0.6\linewidth]{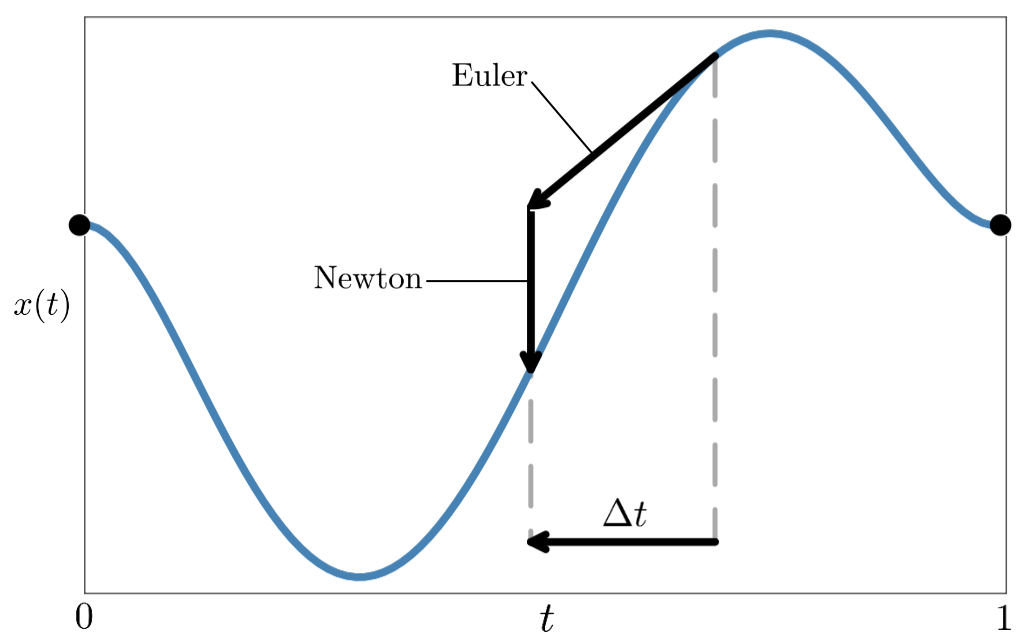}
	\caption{A pictogram of the predictor-corrector scheme at the heart of homotopy continuation.}
	\label{fig:predictor-corrector}
\end{figure}

Primarily, this technique is applied to square polynomial systems that possess 
$s$ isolated complex solutions. Tracking all solutions simultaneously over complex projective space comes with extensive theoretical guarantees, such as smooth solution paths \cite[Thm.\ 7.1.1]{sommesewampler} and that all isolated nonsingular solutions are found if $G(x;u(1))=0$ has sufficiently many solutions \cite[Thm.\ 8.3.1]{sommesewampler}.

In order to solve the implicit closest point problem given by the Lagrange multiplier system \mbox{$\nabla _{x,\lambda}\mathcal{L}(x,\lambda; u)=0$} through the means of homotopy continuation, we make the crucial observation that taking $u(1) = p$ produces a system with known solution $x = p$, where $\lambda$ can be chosen arbitrarily from the cokernel of the Jacobian $\nabla_x g(p)$. Consequently, we can start at the solution $x(t)=p$, discretize the interval $[0,1]$ using an adaptive step size control (e.g.\ \cite{mixedprecisionpathtracking}) and then apply a predictor-corrector scheme to track the solution to the parameter $u(0)=p+v$, yielding a system of equations whose solutions include $(R_p(v),\lambda^*)$ for some $\lambda^*$ that we discard and the Euclidean distance retraction $R_p$ (see Section \ref{section:Euclidean-distance-retraction}).

The expression $\nabla _{x,\lambda}\mathcal{L}(x(t),\lambda(t);u(t))=0$ necessarily remains satisfied throughout the homotopy. For this reason, we can predict the next point using Euler's method and subsequently correct it back the homotopy via Newton's method (cf. \cite{davidenkoode}). This approach is reliable on the invertability of the Hessian $\nabla^2 _{x,\lambda}\mathcal{L}(x(t),\lambda(t);u(t))$ corresponding to $\mathcal{L}$.

While this assertion does not need to hold for arbitrary parameter paths $u(t)$,
it is possible to show that the predictor-corrector path-tracking algorithm applied to the closest point system $\nabla_{x,\lambda}\mathcal{L}=0$ converges to the Euclidean distance retraction, provided that the parameter path $u(t)$ stays sufficiently close to the manifold $\mathcal{M}$:

\begin{theorem}[cf.\ \cite{HeatonHimmelmann}]\label{thm:convergence-result}
	Let $g:\R^{dn} \to \R^m$ be three times continuously differentiable. Let $\mathcal{M} = g^{-1}(0) \subset \R^{dn}$ be a smooth manifold such that the Jacobian $dg_x$ has full rank for every $x \in \mathcal{M}$, and let $u(t)$ be a smooth path in $\R^{dn}$ staying within the tubular neighborhood of $\mathcal{M}$ for which there is a unique closest point. For each $t$, let $x(t)$ be the closest point to $u(t)$. If the path $u(t)$ satisfies
	\begin{equation*}\label{eqn:hypersurface-inequality}
		\mathrm{dist}(u(t), \, x(t)) < \frac{1}{|S(t)|}\sum_{i\in S(t)} \frac{|\nabla g_i(x(t))|}{|d^2g_i(x(t))|}
	\end{equation*}
	for all $t\in [0,1]$ where $S(t) = \{ i : |d^2g_i(x(t))| \neq 0 \}$,
	then there exists a partition $0=t_0<t_1<\cdots<t_N < t_{N+1}=1$ such that each of the $N+1$ applications of the predictor-corrector path-tracking algorithm consisting of Newton's and Euler's method converges and outputs the correct point $R_p(v)$.
\end{theorem}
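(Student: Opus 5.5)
The plan is to reduce the statement to the standard local convergence theory of Newton's method, namely Kantorovich or Smale's $\alpha$-theory, applied to the square Lagrange system
\[F(x,\lambda;u)\;=\;\nabla_{x,\lambda}\mathcal{L}(x,\lambda;u)\;=\;\begin{pmatrix} x-u+dg_x^\top\lambda\\ g(x)\end{pmatrix}.\]
The proof then has two parts. First, the solution curve $t\mapsto (x(t),\lambda(t))$ is well defined and stays uniformly away from singularities of $F$. Second, compactness of $[0,1]$ yields a finite partition on which the Euler predictor lands inside the Newton basin of the next point.

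\textbf{Nonsingularity along the path.} Since $dg_x$ has full rank on $\mathcal{M}$, the multiplier $\lambda(t)$ is uniquely determined by $u(t)-x(t)=dg_{x(t)}^\top\lambda(t)$. The Jacobian of $F$ in $(x,\lambda)$ is the saddle-point matrix
\[J(t)=\begin{pmatrix} I+\sum_i\lambda_i(t)\,d^2g_i(x(t)) & dg_{x(t)}^\top\\ dg_{x(t)} & 0\end{pmatrix}.\]
Because $dg$ has full row rank, $J(t)$ is invertible as soon as the top-left block is positive definite on $\ker dg_{x(t)}=T_{x(t)}\mathcal{M}$. That block has eigenvalues at least $1-\sum_i|\lambda_i|\,|d^2g_i|$. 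The task is to bound $|\lambda_i|$ by $\mathrm{dist}(u,x)/|\nabla g_i(x)|$, using that the normal vector $u-x$ decomposes along the gradients $\nabla g_i$. The hypothesis
\[\mathrm{dist}(u(t),x(t))<\frac{1}{|S(t)|}\sum_{i\in S(t)}\frac{|\nabla g_i(x(t))|}{|d^2g_i(x(t))|}\]
is exactly a curvature bound of this averaged form. It should force the weighted sum below $1$. Only indices in $S(t)$ contribute, since the others have vanishing Hessian.

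I expect this to be the main obstacle. The averaged inequality is weaker than the termwise bound one would naively need, so I would first treat the hypersurface case $m=1$, where it is the clean statement $|\lambda|\,|d^2g|<1$. For general $m$, I would first try to use the tubular-neighbourhood assumption, which says $x(t)$ is the \emph{unique} closest point. Uniqueness gives second-order optimality, so the reduced Hessian is positive semidefinite. The inequality is then used only to upgrade semidefiniteness to definiteness.

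\textbf{Smoothness and uniform constants.} With $J(t)$ invertible for every $t$, the implicit function theorem shows that $(x(t),\lambda(t))$ is a $C^1$ curve, here using $g\in C^3$. It coincides with the closest-point map and starts at $(p,\lambda(1))$. On the compact interval $[0,1]$, continuity gives uniform bounds $\|J(t)^{-1}\|\le\beta$ and a uniform Lipschitz constant $L$ for $DF$ on a fixed neighbourhood of the curve. Kantorovich's theorem (or Smale's $\gamma$-theorem) then gives a radius $r>0$, independent of $t$. Within distance $r$ of $(x(t),\lambda(t))$, Newton's method for $F(\cdot;u(t))$ converges quadratically to $(x(t),\lambda(t))$, and that solution is the unique one in the ball. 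Uniqueness in the ball is what rules out path-jumping.

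\textbf{Partition.} The Euler predictor from $t_k$ to $t_{k+1}$ has error $O(|t_{k+1}-t_k|^2)$, with constant governed by the $C^2$ norm of the curve and hence uniform. The curve is also uniformly continuous. Choosing the mesh so fine that the predicted point lies within $r$ of $(x(t_{k+1}),\lambda(t_{k+1}))$ makes each corrector step converge to the correct branch. Induction over the $N+1$ steps, ending at $u=p+v$, shows the output is $x$ at the endpoint, i.e.\ the closest point to $p+v$. By Theorem \ref{thm:MAIN:euclidean-distance-is-a-retraction} this point is $R_p(v)$.
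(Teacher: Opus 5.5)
\textbf{The gap is in your first stage.} The paper gives no proof of this statement; it imports it from Heaton--Himmelmann. Your overall architecture is the standard one: invertibility of the Lagrange Jacobian $J(t)$ along the solution curve, then compactness, a uniform Kantorovich radius, and a partition fine enough that the Euler predictor lands inside that radius. The second and third stages are fine as written. (With $g\in C^3$ the implicit function theorem gives a $C^2$ curve, which your $O(h^2)$ predictor estimate needs; you wrote $C^1$.) The first stage you leave open, and the route you sketch cannot close it.

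The bound $|\lambda_i|\le \mathrm{dist}(u,x)/|\nabla g_i(x)|$ holds only for orthogonal gradients. In general $\lambda=(dg_x\,dg_x^\top)^{-1}dg_x(u-x)$, so $|\lambda|$ is governed by $\mathrm{dist}(u,x)/\sigma_{\min}(dg_x)$, which the hypothesis never sees. More fundamentally, invertibility of $J(t)$ is intrinsic to $(\mathcal{M},u)$. Replacing $g$ by $Ag$ with $A$ invertible changes neither $\mathcal{M}$, nor $x(t)$, nor $\sum_i\lambda_i d^2g_i$. The averaged right-hand side, however, can be inflated at will by a nearly flat constraint.

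Concretely, in $\mathbb{R}^3$ with coordinates $y$, take $g_1=y_3-\varepsilon y_1^2$ and $g_2=y_2-y_1^2$ with $0<\varepsilon<1$, and $u(t)=(0,t/2,0)$. Then
\[
\|u(t)-(s,s^2,\varepsilon s^2)\|^2=(1-t)\,s^2+(1+\varepsilon^2)\,s^4+t^2/4,
\]
so $x(t)=0$ is the unique closest point for every $t\in[0,1]$. Moreover $\mathrm{dist}(u(t),x(t))\le\frac12<\frac12\bigl(\frac{1}{2\varepsilon}+\frac12\bigr)$, so every hypothesis holds. With the equivalent equations $y_3-\varepsilon y_2,\ g_2$, the same path violates the hypothesis at $t=1$. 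Yet at $t=1$ one gets $\lambda=(0,\frac12)$ and $I+\lambda_2\,d^2g_2(0)=\mathrm{diag}(0,1,1)$, so $(e_1,0)\in\ker J(1)$ with $e_1\in T_0\mathcal{M}$.

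Uniqueness of the closest point does give you semidefiniteness of the reduced Hessian, but the averaged inequality cannot upgrade it to definiteness. Invertibility of $J(t)$ on all of $[0,1]$, and with it the uniform bound $\|J(t)^{-1}\|\le\beta$, is simply false under the hypotheses as stated.

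\textbf{How to repair it.} You have to change what feeds into the first stage. One option is a hypothesis that actually controls the normal curvature, for instance
\[
\mathrm{dist}(u(t),x(t))\,\Bigl(\sum_i\|d^2g_i(x(t))\|^2\Bigr)^{1/2}<\sigma_{\min}\bigl(dg_{x(t)}\bigr)\quad\text{for all }t.
\]
Via $|\lambda|\le\mathrm{dist}(u,x)/\sigma_{\min}(dg_x)$ and Cauchy--Schwarz this gives $\|\sum_i\lambda_i d^2g_i\|<1$, and it reduces to $|\nabla g|/\|d^2g\|$ when $m=1$.

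The other option is to read ``tubular neighbourhood'' as the diffeomorphic image of a neighbourhood of the zero section under $(x,n)\mapsto x+n$. Then invertibility is automatic. For $v\in T_x\mathcal{M}$ one has $|v|^2+\sum_i\lambda_i\,d^2g_i(v,v)=|v|^2-\langle \mathrm{II}(v,v),u-x\rangle$, with $\mathrm{II}$ the second fundamental form. This form degenerates exactly where the differential of $(x,n)\mapsto x+n$ does, i.e.\ at focal points, so the curvature inequality becomes superfluous.

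Once $J(t)$ is invertible on all of $[0,1]$, continuity and compactness give the uniform constants, and the rest of your argument goes through unchanged.
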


In \cite{HeatonHimmelmann}, three different types of homotopy continuation approaches are tested for performance and robustness: A single application of Newton's method to correct $\nabla \mathcal{L}_{x,\lambda}(p,0;p+v)$ to $\mathcal{M}$, multiple iterations of Newton's method on a discretization of $[0,1]$ and a combination of Euler's method and Newton's method on a discretization of $[0,1]$. While in the cases where Newton's method converges, the direct application of Newton's method is undoubtably faster than the other two approaches, the predictor-corrector scheme adds an additional reliability that is worth the decreased computational efficiency in our setting. In particular, that article demonstrates that it is not difficult to construct non-degenerate examples for which Newton's method diverges or converges to the wrong point when applied directly to the closest point homotopy without discretizing $[0,1]$ or using a predictor scheme. 

There are various implementations of this homotopy continuation algorithm available (cf. \cite{Bertini, Hom4PSArticle, PHCpack}). In this paper, we rely on \juliainline{HomotopyContinuation.jl} \cite{HC.jl}, which has been written in the high-performance dynamic programming language \juliainline{Julia}.

In particular, that package has implemented heuristics reliant on an adaptive step size control and local geometric information to prevent path-jumping  \cite{robust_numerical_path_tracking, mixedprecisionpathtracking} and comes with certification techniques to check whether the constraints are indeed satisfied using Smale's $\alpha$-theory \cite{smale-approximate-zero}. This is particularly useful in our setting, as we need to ensure that a given numerical solution corresponds to an actual solution and because discontinuous jumping between connected components constitutes non-physical behavior that we want to avoid. In addition, there is a novel approach for certified curve-tracking (cf. \cite{burr2025certifiedalgebraiccurveprojections}) that extends Smale's $\alpha$-theory to one-dimensional solution sets, making it highly relevant for our setting.

\section{Novel Approaches for Approximating Continuous Motions}
\label{section:novel-approaches-approximating-deformation-paths}

We now apply the construction for smooth manifolds from Section~\ref{section:riemannian-geometry-numerical-AG} to compute continuous motions of geometric constraint systems (see Definitions~\ref{def:constraint-system-in-Rd} and~\ref{def:continuous-motion}). Since the realization space is rarely a smooth manifold, this construction must be extended to handle singularities. This section addresses this and related issues by developing and applying several heuristics and theoretical tools.

Assume that we begin in a flexible realization $\Vector{p}\in g^{-1}(0)$ with a nontrivial infinitesimal flex $\Vector{\dot{p}}\in T_{\Vector{p}}g^{-1}(0)$ that extends to a continuous motion. In that case, the Euclidean distance retraction $R$ from Section \ref{section:Euclidean-distance-retraction} can be used to approximate and discretize the curve \mbox{$t\mapsto R_{\Vector{p}}(t\cdot \Vector{\dot{p}})$} for $t\in [0,\alpha]$ with some $\alpha>0$ that completely lies in the configuration space $g^{-1}(0)$ via homotopy continuation (cf.\ Section \ref{section:Homotopy-Continuation}). From the new point $R_{\Vector{p}}(\alpha\cdot\Vector{\dot{p}})$, we iteratively apply this procedure again. 
This approach has already proven itself as a reliable strategy for approximating continuous motions (e.g.\ in \cite{pyrigi,HeatonHimmelmann,robustgeometricmodellingcylinderpackings,secondorderpolytoperigid}). However, there are several inherent challenges that come from this strategy:
\begin{itemize}
	\item Newton's method typically assumes that $g$ is a square systems of equations whose Jacobian has full rank,
	\item if we want the continuous motion to look smooth, the Euclidean distance retraction should roughly have the same length in each step and should match the previous Euclidean distance retraction step free of cracks and kinks,
	\item algebraic sets may contain singularities, so we need to have a means to escape them and
	\item we need to ensure that the flex $\Vector{\dot{p}}$ extends to a continuous motion.
\end{itemize}

In this section, we systematically investigate these challenges and formulate corresponding mathematical and algorithmic approaches.

In the existing literature, there are several alternative approaches for numerically approximating continuous motions (e.g.\ \cite{almostrigidity,XU2024112939}) that also rely on predictor corrector schemes based on Newton's method. While these strategies have means to address the divergence of Newton's method, they have the same issues when dealing with singularities or when attempting to iteratively smoothly extend the continuous motion. This makes the broad investigation from this section worthwhile.

To help navigate this section, we first formulate Algorithm \ref{alg:approx-deformation-paths} for approximating continuous motions based on the Euclidean distance retraction in pseudocode. This algorithm is implemented in the Julia package \juliainline{DeformationPaths.jl}. Concrete implementation details and functionalities are discussed in Section \ref{section:DeformationPaths.jl} and Appendix Section \ref{section:examples-of-gcs}. 

\renewcommand{\thealgocf}{ACM}
\begin{algorithm}[h]
	\SetAlgoLined
	\SetKwInOut{Input}{Inputs}
	\SetKwInOut{Output}{Output}
	\DontPrintSemicolon
	\Input{Constraints $g:\mathbb{R}^{dn}\rightarrow \mathbb{R}^m$ and realization space $\mathcal{M}=g^{-1}(0)\subseteq \mathbb{R}^{dn}$,\\flexible realization $\Vector{p}\in \mathcal{M}$,\\initial step size $\alpha>0$,\\number of steps $N\in \mathbb{N}$,\\implicit description of the Euclidean distance retraction $R:T\mathcal{M}\rightarrow \mathcal{M}$.}
	\Output{Discretization of a nontrivial continuous motion $\Vector{x}:[0,1]\rightarrow g^{-1}(0)$.}
	\;
	$\Vector{x}_0\,\leftarrow\, \Vector{p}$\;
	$\Vector{v}_0\, \leftarrow\,$ \juliainline{compute_initial_nontrivial_flex}$(g,\Vector{p})$\quad\quad\quad\quad\quad\quad\quad\quad\quad\quad\quad~\juliainline{//} Section \ref{section:blocked-flexes}\;
	\juliainline{curve_lengths} $\,\leftarrow\, $\juliainline{[]}\;
	\For{$k=1,\,\dots,\,N$}{
		\uIf{\normalfont\juliainline{is_singularity}$(g,\Vector{x}_{k-1})$}{
			$\Vector{x}_{k}\,\leftarrow\,$\juliainline{resolve_singularity}$(g,\Vector{x}_{k-1},\Vector{v}_{k-1})$\quad\quad\quad\quad\quad\quad\quad\quad\quad\quad~~\juliainline{//} Section \ref{section:singularities}\;
		} \Else {
			$\Vector{y} \,\leftarrow\, R_{\Vector{x}_{k-1}}(\alpha \cdot\Vector{v}_{k-1})$\quad\quad\quad\quad\quad\quad\quad\quad\quad\quad\quad\quad\quad\quad\quad\quad\quad~ \juliainline{//} Sections \ref{section:riemannian-geometry-numerical-AG} and \ref{section:gauss-newton-method}\;
			\juliainline{push!}\juliainline{(}\juliainline{curve_lengths, compute_length_of_curve}$(\Vector{x}_{k-1},\Vector{y},R)$\juliainline{)}~\quad \juliainline{//} Section \ref{section:vector-transport}\;
			\uIf {\normalfont\juliainline{L[-1]}$=0$}{
				$\Vector{x}_{k}\,\leftarrow\,$\juliainline{resolve_singularity}$(g,\Vector{x}_{k-1},\Vector{v}_{k-1})$\quad\quad\quad\quad\quad\quad\quad\quad\quad\,\juliainline{//} Section \ref{section:singularities}\;
			} \Else {
				$\alpha' \,\leftarrow \, \alpha \,\cdot$ \juliainline{L[1]/L[-1]}\quad\quad\quad\quad\quad\quad\quad\quad\quad\quad\quad\quad\quad\quad\quad\quad\quad\quad~~~\juliainline{//} Section \ref{section:vector-transport}\;
				$\Vector{x}_{k} \, \leftarrow \, R_{\Vector{x}_{k-1}}(\alpha' \cdot\Vector{v}_{k-1})$\quad\quad\quad\quad\quad\quad\quad\quad\quad\quad\quad\quad\quad\quad\quad~\juliainline{//} Sections \ref{section:riemannian-geometry-numerical-AG} and \ref{section:gauss-newton-method}\;
				$\Vector{v}_k \,\leftarrow \, $\juliainline{compute_next_flex}$(g,\Vector{x}_k,\Vector{x}_{k-1},\Vector{v}_{k-1})$\quad\quad\quad\quad~~~\juliainline{//} Sections \ref{section:vector-transport} and \ref{section:blocked-flexes}\;
			}
		}
	}
	\Return{\Vector{x}}\;
	\caption{Approximation of Continuous Motion}
	\label{alg:approx-deformation-paths}
\end{algorithm}

\subsection{Gauss-Newton Method and Randomization}
\label{section:gauss-newton-method}

Newton's method has been designed for square system of equations whose Jacobian has full rank. Hence, outside of regular points and in overdetermined geometric constraint sytems Newton's method may encounter issues or diverge altogether. In the following, we consider two strategies to address these problems.

\paragraph{Damped Gauss-Newton Method}
Classically, Newton's method is derived using a Taylor approximation of first order. For that, let us consider a polynomial system $g:\mathbb{R}^{dn}\rightarrow \mathbb{R}^m$. Starting from a point $\Vector{x}\in \mathbb{R}^{dn}$, we try to find an \struc{approximate zero} of $g$, i.e., a point $\Vector{z}\in \mathbb{R}^{dn}$ such that $g(\Vector{z})=0$. To find it, we compute the first-order Taylor approximation around $\Vector{x}$ as
\[g(\Vector{x}+\Vector{\Delta x})\,=\, g(\Vector{x}) + \Vector{\nabla} g( \Vector{x})^\top \cdot \Vector{\Delta x}+o(\Vector{\Delta x})\]
for the \struc{Jacobian} 
\[\Vector{\nabla} g( \Vector{x})^\top = \left(
	\nabla g_1( \Vector{x}) \hdots \nabla g_m( \Vector{x})\right)^\top\]
of $g$ at $\Vector{x}$ and $\Vector{\Delta x}\in \mathbb{R}^{dn}$ with sufficiently small norm. We now assume that $\Vector{x}+\Vector{\Delta x}$ is already an approximate zero of $g$, so we set the left term to $0$. The crucial observation is now that contrary to Newton's method $\Vector{\nabla} g( \Vector{x})^\top$ is not necessarily invertible. The \struc{Gauss-Newton method} \cite{gauss-newton-method}  addresses this problem by instead computing
\[\left(\Vector{\nabla} g(\Vector{x})\cdot \Vector{\nabla} g(\Vector{x})^\top \right)\cdot \Vector{\Delta x} \,=\, -\Vector{\nabla} g(\Vector{x})\cdot g(\Vector{x}),\]
which is also known as the linear system's \struc{normal form} \cite[p.\ 126]{numericallinearalgebra}.
It provides an implicit expression for $\Vector{\Delta x}$. If $\Vector{\nabla} g(\Vector{x})$ has at least rank $m$, then $\Vector{\nabla} g(\Vector{x})\cdot \Vector{\nabla} g(\Vector{x})^\top$ is invertible. In this case, we can explicitly write 
\[\Vector{\Delta x} \,=\,-\left(\Vector{\nabla} g(\Vector{x})\cdot \Vector{\nabla} g(\Vector{x})^\top\right)^{-1}\cdot \Vector{\nabla} g(\Vector{x})\cdot g(\Vector{x})\,=\,-\left(\Vector{\nabla} g(\Vector{x})^\top\right)^\dagger\cdot g(\Vector{x})\]
for the Moore-Penrose pseudoinverse $\left(\Vector{\nabla} g(\Vector{x})^\top\right)^\dagger$ \cite{pseudoinverse}. It provides a minimum norm least squares solution of the linear system \cite[Prop.\ 7.2.1]{numericallinearalgebra}. If a linear system $\Vector{\nabla} g(\Vector{x})^\top\cdot\Vector{\Delta x}=-g(\Vector{x})$ has a solution, this approach finds it: The least squares solution $\Vector{\Delta x}$ with 
\[\lvert\lvert \Vector{\nabla} g(\Vector{x})^\top\cdot\Vector{\Delta x}+g(\Vector{x})\lvert\lvert^2\,=0\] 
is equivalent to $\Vector{\nabla} g(\Vector{x})^\top\cdot\Vector{\Delta x}=-g(\Vector{x})$. Note that the pseudoinverse is also defined in the case where $\Vector{\nabla} g(\Vector{x})$ does not have full rank, so we can use this technique in either case. The Gauss-Newton method comes with similar convergence guarantees as the classical Newton's method \cite{gaussnewtonconvergenceanalysis}. 

However, the residuals do not necessarily decrease in every iteration. For that reason we damp the Gauss-Newton step by adding a $dn\times dn$ diagonal matrix $\lambda\cdot I_{dn}$ with $\lambda >0$ in each iteration that is determined via a backtracking line search using Armijo conditions (cf.\ \cite{armijolinesearch}). In summary, when setting $\Vector{x}_0=\Vector{x}$ the damped Gauss-Newton update of the $k$-th point $\Vector{x}_k$ is given by
\[\Vector{x}_{k+1}\,=\, \Vector{x}_{k}-\alpha_k\cdot \left(\Vector{\nabla} g(\Vector{x}_k)^\top\right)^\dagger\cdot g(\Vector{x}_k)\]
for an $\alpha_k$ that satisfies the \struc{Armijo condition} 
\[||g(\Vector{x}_{k+1})||\leq ||g(\Vector{x}_{k})||+\frac{1}{2}\cdot\alpha_k\cdot \big\langle \Vector{\nabla} g(\Vector{x}_k) \cdot\frac{g(\Vector{x}_k)}{||g(\Vector{x}_{k})||} ,~\Vector{\Delta x}_k\big\rangle \]
for the Euclidean norm $\lvert\lvert \bullet \lvert\lvert$. In practice, $\alpha_k$ is obtained by starting from $\alpha_0=1$ and then halving the value of the damping factor until the condition is satisfied. Doing so guarantees that the Gauss-Newton step sufficiently decreases the norm of the system of polynomial equations evaluated at the new point. The damping stabilizes the method when the Jacobian is ill-conditioned and ensures that the method does not diverge. This makes the Gauss-Newton method highly versatile and suitable for the approximation of continuous motions. 

\paragraph{Randomization}
The existence of stresses in geometric constraint system often makes it difficult to accurately approximate continuous motions. Since equilibrium stresses represent dependencies between the polynomial constraints, they hinder the convergence to solutions with arbitrarily low norm as two dependent constraints may be conflicting.

As an example for why stresses may pose issues, let us consider a (second-order) rigid and two flexible bar-joint frameworks. These frameworks have in common that they all have one nontrivial infinitesimal flex and one equilibrium stress; despite that, their behavior is quite different. The 3-prism in Figure \ref{fig:prestress-stable-or-no}(l.) is generically rigid, yet flexible. In Figure \ref{fig:prestress-stable-or-no}(c.), a vertex has been added to an edge of the otherwise infinitesimally rigid complete graph $K_4$. The resulting framework is second-order rigid. Alternatively, we can add a vertex and an edge to $K_4$, which renders it flexible (cf. Figure \ref{fig:prestress-stable-or-no}(r.)). Still, it has an overbraced component, producing a stress.

\begin{figure}[h!]
	\centering
	\raisebox{1.75em}{
		\includegraphics[width=0.315\textwidth]{Images/quadrilateral_flexible.png}
	}
	\hspace*{3mm}
	\includegraphics[width=0.245\textwidth]{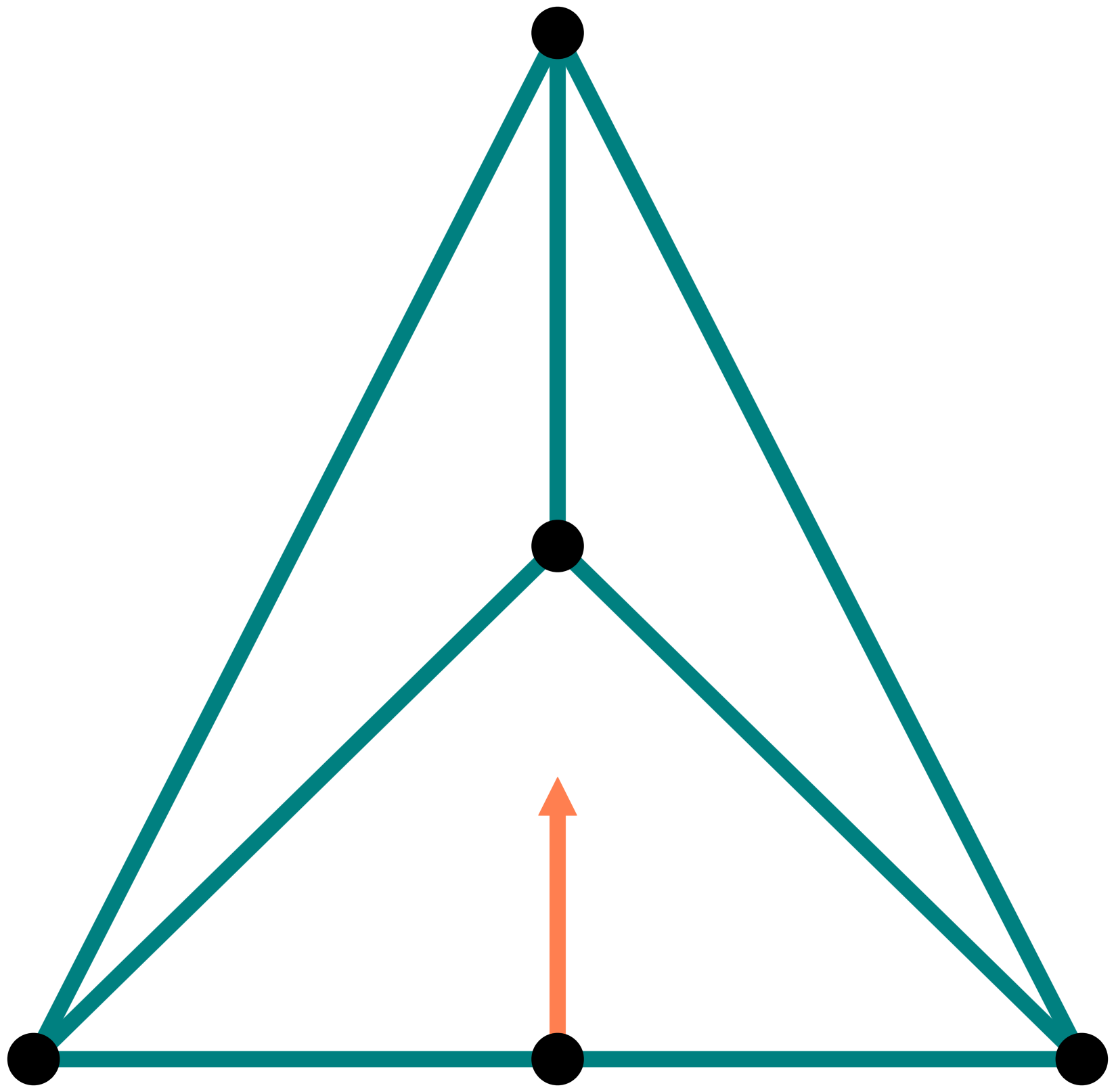}
	\hspace*{7mm}
	\raisebox{1.6em}{
		\includegraphics[width=0.305\textwidth]{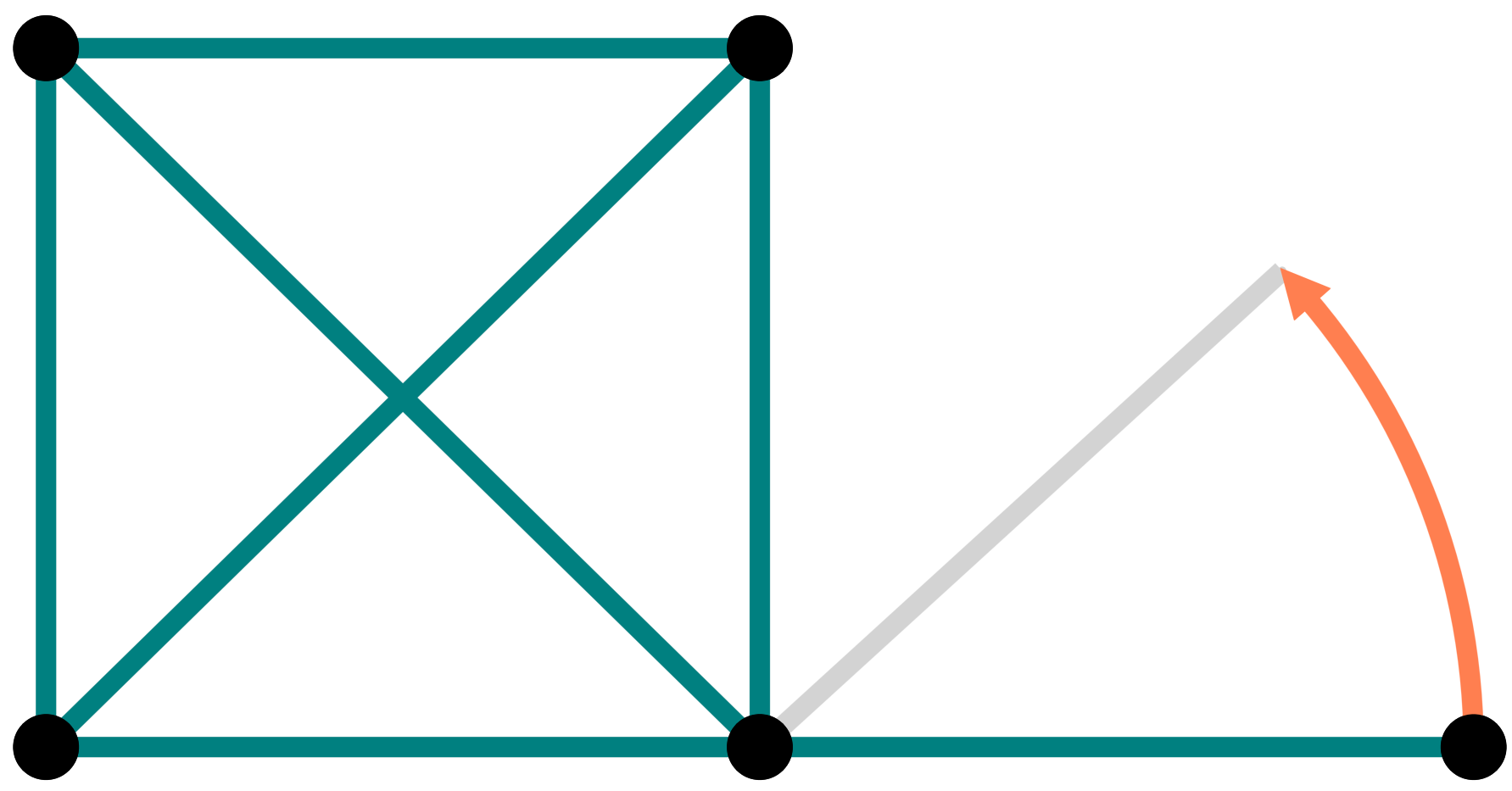}
	}
	\caption{Three bar-joint frameworks with the same number of nontrivial infinitesimal flexes and equilibrium stresses. Two of them are flexible (left and right) and a corresponding motion has been hinted at in gray.}
	\label{fig:prestress-stable-or-no}
\end{figure}

While the local algebraic information from all of these examples is identical, they require careful and individual treatment to produce continuous motions: whereas any small perturbation of the flexible 3-prism in Figure \ref{fig:prestress-stable-or-no}(l.) and the rigid framework in Figure \ref{fig:prestress-stable-or-no}(c.) produces infinitesimally rigid realizations, the flexible framework portrayed in Figure \ref{fig:prestress-stable-or-no}(r.) is flexible, even under arbitrary perturbations. The central question in this context is: How can we remove the redundancies given by the equilibrium stresses so that Newton's method reliably converges again, while preserving the local geometry?

In numerical algebraic geometry, such algebraic dependencies are addressed using a technique called \struc{randomization} (cf. \cite[§13.5]{sommesewampler}), which consists of multiplying the corresponding polynomial system $g(\Vector{x})=\Vector{0}$ with a matrix $\Lambda\in \mathbb{C}^{(m-k)\times m}$ filled with random entries for the number of equilibrium stresses $k$, which is necessarily smaller than the number of equations $m$. Subsequently, we study the solutions of $\Lambda\cdot g(\Vector{x})=\Vector{0}$. The properties of this new system of equations are captured by the following theorem:

\begin{theorem}[cf.\ {\cite[Thm.\ 13.5.1]{sommesewampler}}]
	\label{thm:poly-sys-randomization}
	Let $g(\Vector{x})=\Vector{0}$ be a system of $m$ polynomials on $\mathbb{C}^N$. Assume that $\mathcal{X}\subseteq \mathbb{C}^N$ is an irreducible affine algebraic set. Then, there exists a Zariski open set $U$ of $(m-k)\times m$ matrices $\Lambda \in \mathbb{C}^{(m-k)\times m}$ for $0\leq k<m$ such that it holds for $\Lambda \in U$:
	\begin{enumerate}
		\item if $\dim \mathcal{X} > N-(m-k)$, then $\mathcal{X}$ is an irreducible component of the algebraic set $\mathcal{V}(g)$ if and only if it is an irreducible component of the algebraic set $\mathcal{V}(\Lambda \cdot g)$;
		\item if $\dim \mathcal{X} = N-(m-k)$, then $\mathcal{X}$ is an irreducible component of the algebraic set $\mathcal{V}(g)$ implies that $\mathcal{X}$ is also an irreducible component of the algebraic set $\mathcal{V}(\Lambda \cdot g)$;
		\item if $\mathcal{X}$ is an irreducible component of the algebraic set $\mathcal{V}(g)$, its multiplicity as a solution component of $\Lambda \cdot g(\Vector{x})=0$ is greater than or equal to its multiplicity as a solution component of $g(\Vector{x})=0$. Equality holds if either multiplicity is $1$.
	\end{enumerate}
\end{theorem}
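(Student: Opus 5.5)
Since $\mathcal{V}(g)\subseteq \mathcal{V}(\Lambda\cdot g)$ for every $\Lambda$, the proof comes down to controlling the extra solutions that randomization can introduce, and I would organise everything around a Bertini-type argument. Write $g=(g_1,\dots,g_m)$ and consider the linear system $\mathcal{L}=\mathrm{span}_\mathbb{C}(g_1,\dots,g_m)$. Each row of $\Lambda$ picks an element of $\mathcal{L}$, so $\Lambda\cdot g$ consists of $m-k$ generic members of this linear system. Its base locus is exactly $\mathcal{V}(g)$. Off the base locus, the map $\phi=[g_1:\dots:g_m]:\mathbb{C}^N\setminus\mathcal{V}(g)\to\mathbb{P}^{m-1}$ is regular, and
\[\mathcal{V}(\Lambda\cdot g)\setminus\mathcal{V}(g)=\phi^{-1}(L_\Lambda),\]
where $L_\Lambda=\{y:\Lambda y=0\}$ is a generic linear subspace of codimension $m-k$. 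The main tool is then the standard fact that, for $\Lambda$ in a Zariski open set, every irreducible component of $\phi^{-1}(L_\Lambda)$ has dimension exactly $N-(m-k)$ (or the preimage is empty). I would prove this by a dimension count on the incidence variety $\{(x,\Lambda):\Lambda g(x)=0,\ g(x)\neq0\}$: it is a vector bundle over $\mathbb{C}^N\setminus\mathcal{V}(g)$, then apply generic fiber dimension over the $\Lambda$-space. Separately, Krull's principal ideal theorem gives that every component of $\mathcal{V}(\Lambda\cdot g)$ has dimension at least $N-(m-k)$.

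For (1), take $\dim\mathcal{X}>N-(m-k)$. If $\mathcal{X}$ is a component of $\mathcal{V}(g)$ and lies in a component $Z$ of $\mathcal{V}(\Lambda\cdot g)$, then either $Z\subseteq\mathcal{V}(g)$, which forces $Z=\mathcal{X}$, or $Z$ meets the complement of $\mathcal{V}(g)$. In the latter case $Z\setminus\mathcal{V}(g)$ is dense in $Z$ and lies in $\phi^{-1}(L_\Lambda)$, so $\dim Z=N-(m-k)<\dim\mathcal{X}$, which is a contradiction. Conversely, if $\mathcal{X}$ is a component of $\mathcal{V}(\Lambda\cdot g)$, then it cannot be the closure of a component of $\phi^{-1}(L_\Lambda)$ by the same dimension mismatch. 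Hence $\mathcal{X}\subseteq\mathcal{V}(g)\subseteq\mathcal{V}(\Lambda\cdot g)$, and maximality transfers, so $\mathcal{X}$ is a component of $\mathcal{V}(g)$.

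For (2), the same first argument applies. A component $Z\supsetneq\mathcal{X}$ would have dimension greater than $N-(m-k)$, so it could not come from $\phi^{-1}(L_\Lambda)$; thus $Z\subseteq\mathcal{V}(g)$, contradicting maximality of $\mathcal{X}$. The converse fails here because extra equidimensional components of dimension exactly $N-(m-k)$ may appear, which is why only one direction is claimed.

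For (3), I would localise at the generic point of $\mathcal{X}$ and compare the local rings $\mathcal{O}/(g)$ and $\mathcal{O}/(\Lambda g)$. Since $(\Lambda g)\subseteq(g)$, there is a surjection from the second onto the first, so length is nonincreasing, which gives the inequality. For the equality, multiplicity one for $g$ means that at a generic point of $\mathcal{X}$ the Jacobian of $g$ has rank $N-\dim\mathcal{X}$. A generic $\Lambda$ preserves this rank, since $\Lambda$ composed with a fixed rank-$r$ matrix has rank $r$ generically, provided $r\le m-k$; this holds because $\mathcal{X}$ is a component of $\mathcal{V}(\Lambda g)$ by (1) or (2). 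So $\Lambda g$ is reduced there as well. If instead the multiplicity for $\Lambda g$ is $1$, the surjection between local rings of length one forces the other length to be one. The genericity conditions in (1)–(3) are finitely many nonempty Zariski open conditions, and I take $U$ to be their intersection.

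I expect the main obstacle to be the dimension statement for $\phi^{-1}(L_\Lambda)$, namely making the Bertini/fiber-dimension argument rigorous, including emptiness and irreducibility subtleties, while keeping a single open set $U$ that works for all components simultaneously.
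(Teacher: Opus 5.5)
Your proof is correct. The incidence-variety/fiber-dimension argument shows that $\mathcal{V}(\Lambda\cdot g)\setminus\mathcal{V}(g)$ is empty or of pure dimension $N-(m-k)$ for generic $\Lambda$; combined with the dimension bookkeeping for (1)--(2) and the surjection $R/(\Lambda g)\to R/(g)$ plus the Jacobian criterion for (3), this is the standard Bertini (base-locus) argument for randomization. The paper gives no proof of its own and only quotes the result from Sommese--Wampler, so there is no separate argument to compare against. The one point worth making explicit is that (3) only makes sense when $\dim\mathcal{X}\ge N-(m-k)$: otherwise, by Krull, $\mathcal{X}$ is never a component of $\mathcal{V}(\Lambda\cdot g)$ and the equality clause fails. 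This is exactly the hypothesis your rank condition $r\le m-k$ relies on.
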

 
In other words, when multiplying the system of equations $g$ with a matrix filled with random complex entries, then the irreducible components of the algebraic set $\mathcal{V}(g)$ do not significantly change. Assuming that $\dim \mathcal{X}<N-(m-k)$, then $\mathcal{X}$ is a proper subvariety of an irreducible component of $\mathcal{V}(g)$, since every (real) irreducible component of $\mathcal{V}(g)$ has at least dimension $N-(m-k)$.

Nevertheless, we are only interested in real solutions, so one may wonder whether this result holds for polynomial system with real coefficients defined over $\mathbb{R}^N$, as well. In fact, $U$ is Zariski open, implying that it is the complement $U=\mathbb{C}^N\setminus V$ of a proper algebraic set $V\subseteq \mathbb{C}^N$. As a proper algebraic set, $V$'s codimension is at least 1. Restricting $V$ to $\mathbb{R}^N$ still produces a (real) proper algebraic subset of codimension at least 1, since proper algebraic hypersurfaces have (at most) dimension $N-1$ (cf. \cite[Prop.\ 1.13]{hartshorne}). Therefore, we can replace the complex numbers $\mathbb{C}$ in Theorem \ref{thm:poly-sys-randomization} by the real numbers $\mathbb{R}$ to obtain a result that is relevant in our setting. In particular, the multiplication by a suitable random matrix removes the geometric constraint system's equilibrium stresses:

\begin{proposition}
	\label{prop:stress-free-randomization}
	Assume that the geometric constraints system $(V,g,p)$ with $m$ polynomial constraints has a $k$-dimensional space of equilibrium stresses at the realization $p$. Then there exists a Zariski open subset of $(m-k)\times m$ matrices $\Lambda$ so that the polynomial system $\Lambda\cdot g(\Vector{x})=0$ has the same dimension as $g(\Vector{x})=0$ and no equilibrium stresses at $p$.
	\begin{proof}
		Theorem \ref{thm:poly-sys-randomization} guarantees that any irreducible component of $\mathcal{V}(g)$ is also an irreducible component of $\mathcal{V}(\Lambda \cdot g)$. First, we realize that $p$ is also a realization of the modified geometric constraint system $\mathcal{F}^\sharp=(V,\Lambda\cdot g,p)$. The rigidity matrix $R_{\mathcal{F}^\sharp}(p)$ is related to the rigidity matrix $R_{\mathcal{F}}(p)$ via
		\[R_{\mathcal{F}^\sharp}(p) = \Lambda \cdot R_{\mathcal{F}}(p).\]
		Therefore, the infinitesimal flexes of $\mathcal{F}$ are a subset of the infinitesimal flexes of $\mathcal{F}^\sharp$.
		
		Take any equilibrium stress $\omega^\sharp$ of $\mathcal{F}^\sharp$ at $p$. In other words, \[R_{\mathcal{F}^\sharp}(p)^\top \cdot \omega^\sharp \,=\, R_{\mathcal{F}}(p)^\top \cdot \Lambda^\top \cdot \omega^\sharp \,=\, 0.\]
		Consequently, $\Lambda^\top \cdot \omega^\sharp$ is an equilibrium stress of $\mathcal{F}$ at $p$. Since $\Lambda$ can be chosen from a Zariski open set of matrices, we can vary it, implying that $\Lambda^\top \omega^\sharp$ is an equilibrium stress of $\mathcal{F}$ for any choice of $\Lambda$. We now show that $\omega^\sharp$ must be trivial. 
		
		Since the image of a linear map is a linear subspace of its codomain, we can assume that $\dim \im \Lambda^\top = m-k$. Otherwise, $\dim \im \Lambda^\top < m-k$, which according to the Rank Nullity Theorem only happens when $\Lambda^\top$ does not have full rank. However, the set of $(m-k)\times m$ matrices with rank at most $(m-k-1)$ is a proper algebraic subset of $\mathbb{R}^{(m-k) \times m}$ called the \struc{determinantal variety}. Therefore, we can choose $\Lambda$ from the Zariski open complement of this set so that it has full rank, so that the restriction of $R_{\mathcal{F}}(p)^\top$ to $\im \Lambda^\top$ is injective onto its image and hence
		\[\mathrm{rank}(R_{\mathcal{F}}(p)^\top\cdot \Lambda^\top)\,=\,\mathrm{rank}(R_{\mathcal{F}}(p)^\top)\,=\, m-k\]
		according to the Rank Nullity Theorem when interpreting $R_{\mathcal{F}}(p)^\top$ as a linear map from $\mathbb{R}^m$ to $\mathbb{R}^N$.
		Another application of the Rank Nullity Theorem shows that 
		\begin{eqnarray*}
			m-k &=& \dim \ker \left( R_{\mathcal{F}}(p)^\top\cdot\Lambda^\top\right) + \underbrace{\mathrm{rank}\left(R_{\mathcal{F}}(p)^\top\cdot\Lambda^\top\right)}_{=m-k}
					\end{eqnarray*}
		when interpreting $R_{\mathcal{F}}(p)^\top\cdot\Lambda^\top$ as a linear map from $\mathbb{R}^{m-k}$ to $\mathbb{R}^N$. In other words, the dimension of the solution space for the linear system $R_{\mathcal{F}}(p)^\top \cdot \Lambda^\top\cdot \omega^\sharp=0$ is $0$. As a homogeneous linear system, the solution space is a vector space, implying that $\omega^\sharp=0$.
	\end{proof}
\end{proposition}

To momentarily remove the stresses from our geometric constraint system in order to ensure the convergence of Newton's method on the polynomial system $g(\Vector{x})=0$ with $m$ equations, this observation suggests the following approach. We first compute the dimension $k$ of the linear space of equilibrium stresses. Subsequently, we generate a $(m-k)\times m$ matrix $\Lambda$ with random floating point entries, e.g. by drawing them from the Gaussian  $\mathcal{N}(0,1)$. This ensures that we lie outside of the corresponding discriminant according to Theorem \ref{thm:poly-sys-randomization}. We then randomize $g$ by multiplying it with $\Lambda$ to obtain a stress-free system $\Lambda \cdot g(\Vector{x})=0$ with high probability (cf. Proposition \ref{prop:stress-free-randomization}). On this system, we compute the Newton step.

\subsection{Unit-Speed Parametrization and Vector Transport}
\label{section:vector-transport}

Retractions have their origins in Riemannian optimization, where they are used to approximate geodesics. A significant disadvantage of retractions with respect to geodesics is that geodesics are locally length-minimizing and have no acceleration, implying that their speed is constant. This makes them particularly suitable for physical contexts, in which energy-minimization plays a role. We mimic this behavior by adaptively choosing the step size so that the length of subsequent curve segments stays approximately constant. 

Another disadvantage of iteratively computing retractions is that subsequent curve segments do not necessarily glue differentiably. This is major problem in geometric constraint systems, where non-differentiable behavior would produce non-physical continuous motions by the law of inertia. We solve this by employing a vector transport along the retraction curve.

\paragraph{Unit-Speed Parametrization} Geodesics have the property that they can be parametrized with constant speed. The homotopy continuation methods that we employ to find the Euclidean distance retraction work on fine discretizations of the Euclidean distance retraction curve between two points. Hence, we can approximate the curve's length by summing over the lengths of the individual line segments. In order for the continuous motion to be smooth and not to change speed rapidly, all predictor-corrector steps should have roughly the same length. We accomplish this by using an adaptive step size control. Instead of fixing a global step size $\alpha>0$, we first record the length of the initial Euclidean distance retraction $L_0>0$ starting from a point $\Vector{x}_0 = \Vector{p}\in \mathbb{R}^N$ with step size $\alpha_0=\alpha>0$ and tangent direction $\Vector{v}_0=\Vector{v}\in T_{\Vector{x}_0}g^{-1}(0)$. In subsequent steps $k>0$, we first look ahead by computing the Euclidean distance retraction curve $c(t)=R_{\Vector{x}_k}(t\cdot\alpha_{k-1}\cdot \Vector{v}_k)$ for $t\in [0,1]$ with the previous step size $\alpha_{k-1}$ using homotopy continuation. Subsequently, we approximate the length of the curve $c$ using the previously described approach and denote it by $L_k$. We correct the step size so that the length of the curve is roughly equal to $L_0$ by setting \[\alpha_k=\frac{L_0}{L_k}\cdot \alpha_{k-1}\]
and then recomputing the Euclidean distance retraction as $R_{\Vector{x}_k}(\alpha_{k}\cdot \Vector{v}_k)$.

In singularities, this approach does not necessarily work. For that reason, we do not rescale the length of the curve when detecting a singularity. 

\paragraph{Vector Transport}
Assume that our iteration with $k>0$ is currently at the point $\Vector{x}_k$ with previous tangent vector $\Vector{v}_{k-1}\in T_{\Vector{x}_{k-1}}g^{-1}(0)$. Our goal is to choose the subsequent tangent vector $\Vector{v}_k\in T_{\Vector{x}_k}g^{-1}(0)$ so that the subsequent step $\Vector{v}_k$ lies in the linear span of the tangent $c_{k-1}'(1)$ in $t=1$ of the Euclidean distance retraction curve $c_{k-1}(t)=R_{\Vector{x}_{k-1}}(\alpha_{k-1}\cdot \Vector{v}_{k-1})$ with $t\in [0,1]$. However, we do not typically have access to the derivative of $c_{k-1}$ and choosing a sufficiently fine discretization of $c_{k-1}$ with homotopy continuation would be costly. Therefore, we rely on a parallel transport along $c_{k-1}$ \cite[§10.3]{boumal2020intromanifolds}. Along retractions, the parallel transport takes a particularly convenient form in terms of the retractions differential (cf. \cite[Prop.\ 10.62]{boumal2020intromanifolds}).

To obtain a derivative-free version of vector transport, we consider \cite[Prop.\ 10.64]{boumal2020intromanifolds} instead. This result states that the orthogonal projection of $\Vector{v}_{k-1}$ onto the tangent space $T_{\Vector{x}_{k-1}}g^{-1}(0)$ is a \struc{transporter}, which is an approximation of the parallel transport sufficiently close to $\Vector{x}_{k-1}$. Consider the matrices $T_{k-1}$ and $T_{k}$ whose columns consist of orthonormal bases of $T_{\Vector{x}_{k-1}}g^{-1}(0)$ and $T_{\Vector{x}_{k}}g^{-1}(0)$, respectively. First, we express $\Vector{v}_{k-1}$ in terms of the local coordinates by computing $T_{k-1}^\top\cdot \Vector{v}_{k-1}$. Subsequently, we solve the associated \struc{Procrustes problem} (see \cite{procrustes}) to find an orthogonal matrix $\mathcal{O}_{(k-1)\rightarrow k}$ that minimizes the least squares problem $||\mathcal{O}_{(k-1)\rightarrow k}\cdot T_{k-1}^\top-T_k^\top||$. In other words, $\mathcal{O}_{(k-1)\rightarrow k}$ is the orthogonal transformation that best aligns the local frame $T_{k-1}$ with the local frame given by $T_{k}$. We apply this orthogonal transformation to $T_{k-1}^\top\cdot \Vector{v}_{k-1}$ in order to transform the vector $\Vector{v}_{k-1}$ to the local frame given by $T_k$. Finally, we multiply the resulting intrinsic representation to $T_{x_{k}}g^{-1}(0)$ by multiplying it with $T_{k}$. In summary, we obtain
\[\Vector{v}_k\,=\,T_k\cdot \mathcal{O}_{(k-1)\rightarrow k}\cdot T_{k-1}^\top\cdot \Vector{v}_{k-1}.\]
We can use the singular value decomposition to compute the orthogonal transformation $\mathcal{O}_{(k-1)\rightarrow k}$. This construction explicitly describes a vector transport of $\Vector{v}_{k-1}$ from $T_{x_{k-1}}g^{-1}(0)$ to $T_{x_{k}}g^{-1}(0)$. If the step size is sufficiently short, then $\Vector{v}_k$ is a suitable approximation for the derivative of the parametrized Euclidean distance retraction curve at $\Vector{x}_k$.

\subsection{Treatment of Singularities}
\label{section:singularities}
Singularities pose a significant challenge for continuous optimization algorithms. Retractions are no exception and in Riemannian optimization, a central assumption is that the underlying constraint set is a smooth manifold. Nevertheless, algebraic sets $\mathcal{X}$ have the property that the singularities form a proper algebraic subset of codimension at least 1 (cf. \cite[Thm. 5.3]{hartshorne}). Therefore, the \struc{singular locus} $\mathrm{Sing}(\mathcal{X})$ is a set of measure $0$ in $\mathcal{X}$ and generic points in $\mathcal{X}$ are non-singular or \struc{regular}. Since the complex numbers $\mathbb{C}$ can be interpreted as a 2-dimensional $\mathbb{R}$ vector space, over $\mathbb{C}$ the singular locus therefore has \emph{real} codimension at least two, meaning that the path-connected components of $\mathcal{X}$ are path-connected in $\mathcal{X}\setminus \mathrm{Sing}(\mathcal{X})$, as well, and we can simply avoid the singularities.

However, over $\mathbb{R}$ it may be unavoidable that the Euclidean distance retraction gets stuck in a singularity. For illustration purposes, we consider the cuspidal and nodal cubic, which are depicted in Figure \ref{fig:cusp-and-node}.

\begin{figure}[h!]
	\centering
	\includegraphics[width=0.65\linewidth]{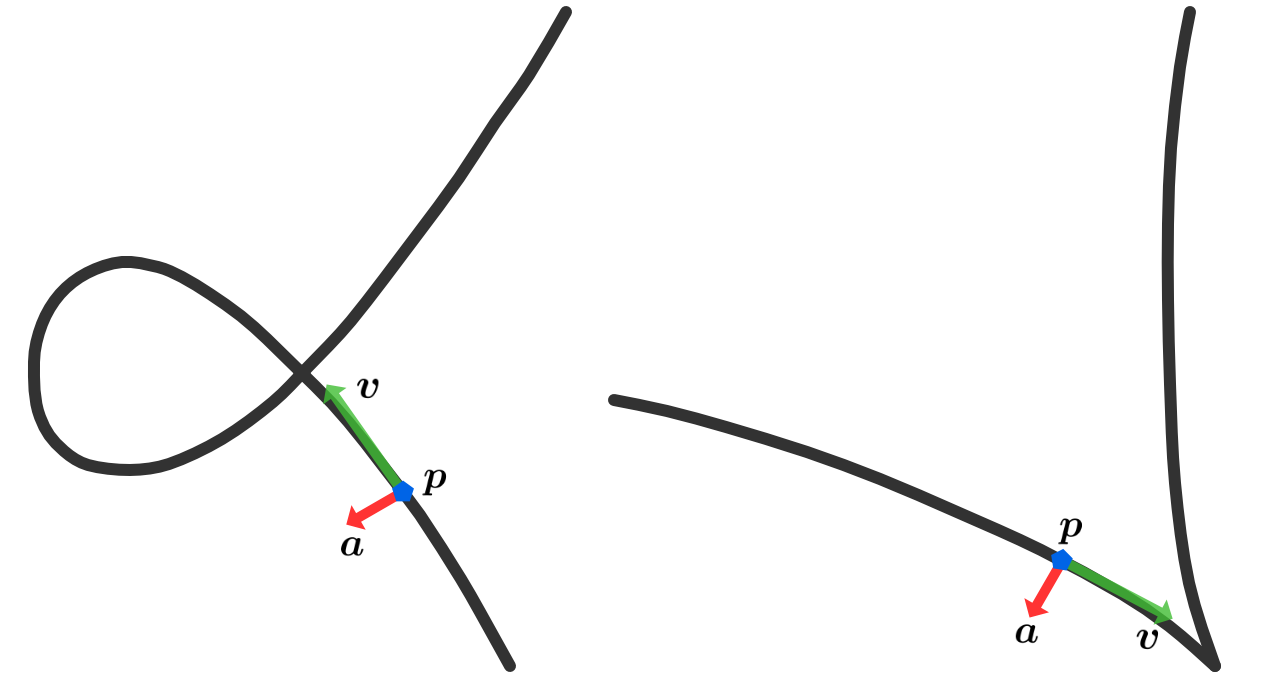}
	\caption{Depictions of the \textbf{(left)} nodal cubic and \textbf{(right)} cuspidal cubic. We marked a tangent vector $\Vector{v}$ (green) and the corresponding acceleration $\Vector{a}$ (red) in a regular point $\Vector{p}$ on the curves.}
	\label{fig:cusp-and-node}
\end{figure}
In order to traverse a singularity, we take inspiration in Gortler, Holmes-Cerfon and Theran \cite{almostrigidity}. In this article, the authors compute the \struc{acceleration} $\Vector{a}$ along a curve $t\mapsto c(t)$ parametrizing a continuous motion by solving a certain linear system in the curve's tangent vector $\dot{c}(0)$.

In the context of geometric constraint systems $\mathcal{F}=(V,g,p)$, these tangent vectors arise from (locally) smooth curves $t\mapsto c(t)\in g^{-1}(0)$ through regular points $c(0)=\Vector{p}$ which satisfy the polynomial constraints $g(c(t))=0$. When differentiating the former constraints with respect to $t$ by applying the chain rule and evaluating the result at $t=0$, we obtain
\[\frac{\partial}{\partial t}g(c(t))\big\lvert_{t=0}\,=\, R_{\mathcal{F}}(c(0))\cdot \dot{c}(0) \,=\, R_{\mathcal{F}}(\Vector{p})\cdot \dot{c}(0) \,=\, 0.\]
Therefore, the possible tangents $\dot{c}(0)\in T_{\Vector{p}}g^{-1}(0)$ are exactly given by infinitesimal flexes $\Vector{v}$ of $\mathcal{F}$ at $\Vector{p}$, providing a different interpretation of infinitesimal flexes.

Before computing the second derivative of $g(c(t))$ in $t=0$, let us recall that $g$ is a system of quadratic polynomials, implying that its Jacobian $R_{\mathcal{F}}(\Vector{p})$ -- which we refer to as the rigidity matrix here -- solely consists of linear forms. Consequently, the second derivative has the form
\[\frac{\partial^2}{\partial t^2}g(c(t))\big\lvert_{t=0}\, = \, R_\mathcal{F}(c(0))\cdot \ddot{c}(0)+R_\mathcal{F}(\dot{c}(0))\cdot \dot{c}(0)\, = \, 0.\]
If we now set $\Vector{v}=\dot{c}(0)$ and $\Vector{a}=\ddot{c}(0)$, then we can rewrite this equation as the linear system
\begin{eqnarray}
	\label{eq:acceleration-lin-sys}
	R_{\mathcal{F}}(\Vector{p})\cdot \Vector{a} \, =\, -R_{\mathcal{F}}(\Vector{v})\cdot \Vector{v}
\end{eqnarray}
for computing an acceleration $\Vector{a}$. Still, there is some level of ambiguity in this linear system, as it is underdetermined: While the right side of the equation is completely determined by the tangent $\Vector{v}$, the rigidity matrix $R_{\mathcal{F}}(\Vector{p})$ does not have full rank, since we assumed that $\mathcal{F}$ is flexible.

If no solution $(\Vector{v},\Vector{a})$ this linear system with a nontrivial flex $\Vector{v}$ exists, then $\mathcal{F}$ is second-order rigid (cf. Definition \ref{def:stress-and-second-order-rigidity}), relating the acceleration to second-order flexes. Assuming that $\mathcal{F}$ is flexible in $\Vector{p}$, our approach of following retraction curves in order to compute has the additional benefit that $\Vector{a}\in N_{\Vector{p}}g^{-1}(0)$ for these types of curves (cf. Definition \ref{def:retraction}). This lets us resolve the ambiguity in Equation (\ref{eq:acceleration-lin-sys}) from before by additionally requiring that the acceleration $\Vector{a}$ is orthogonal to tangent vectors $\Vector{u}$, i.e. $\Vector{a}^\top\cdot \Vector{u}=0$ for all $\Vector{u}\in T_{\Vector{p}}g^{-1}(0)$.

In singularities, the dimension of the tangent space increases. For these cases, the predicted next tangent vector (cf. Section \ref{section:vector-transport}) may not accuractely describe the curves' behavior. 

Singularities can be detected by continually checking whether the rank of the rigidity matrix drops. If that happens, then we drastically reduce the step size and take several more Euclidean distance retraction steps to ensure that we are sufficiently close to the singularity. If we already computed at least one retraction curve ($k>0$), we consider the previous acceleration $\Vector{a}_{k-1}$ alongside the predicted tangent $\Vector{v}_{k}$. The reason why we consider the previous acceleration is that the linear system \eqref{eq:acceleration-lin-sys} does not necessarily have a solution in singularities. 

\begin{example}
	A cusp singularity can be achieved in the configuration space by using only bar-joint frameworks (cf. \cite{higherorderrigidity}). For that reason, we consider the planar cuspidal cubic depicted in Figure \ref{fig:cusp-and-node}(r.) given by the implicit equation $h(x,y)=y^2-x^3=0$. The Jacobian of this system is given by $\nabla h(x,y)^\top=(-3x^2,2y)$. The cusp singularity is located in the origin, where the $\mathrm{rank}(\nabla h(0,0)^\top)=0$. When following any parametrization of a smooth curve $t\mapsto\gamma(t)$ for $t\in[0,1]$ on $h^{-1}(0)$ so that $\gamma(1)=(0,0)$, the  normalized velocity $\dot{\gamma}(1)=\Vector{v}$ will lie in the affine cone $\mathbb{R}_{\geq 0}\cdot (-1,0)$. From the linear system \eqref{eq:acceleration-lin-sys}, we obtain the equation
	\[0\,=\, (0,0)\cdot \Vector{a} \,=\, \nabla h(0,0)^\top\cdot \Vector{a} \overset{\exists t\geq 0}{=} \nabla h(-t,0)^\top\cdot \begin{pmatrix} -t\\ 0 \end{pmatrix} = (-3t^2,0)\cdot \begin{pmatrix} -t\\ 0 \end{pmatrix}\,=\,3t^3\]
	for $\Vector{a}$ and $t\geq 0$, which only has a solution for $t=0$, in which case $\Vector{v}= (0,0)^\top$.
	
	Conversely, in the regular point $\Vector{p}=(1,1)$ with tangent $\Vector{v}=(-2,-3)^\top$ pointing towards the origin, we find that $(-3,2)\cdot\Vector{a}=42$ by inserting these values into \eqref{eq:acceleration-lin-sys}. Solving this linear system yields the acceleration $\Vector{a}^\top=\frac{1}{13}\cdot (-126,\, 84)$.
\end{example}

By computing the acceleration in a previous, regular point $\Vector{p}_{k-1}$, we ensure that the linear system \eqref{eq:acceleration-lin-sys} has a solution. 
 
To escape the singularity $\Vector{p}_k$, we first try to slightly perturb the linear step $\Vector{p}+\alpha\Vector{v}$ using the acceleration to compute the quadratic predictor $\Vector{p}_k+\alpha\Vector{v}_k+\frac{1}{2}\alpha^2 \Vector{a}_{k-1}$ with step size $\alpha>0$ inspired by Newtonian mechanics and computed by approximating a second-order Taylor expansion of $c(t)$ at the singularity $\Vector{p}_k$. This lets us differentiably traverse tacnodes, nodal singularities (cf. Figure \ref{fig:cusp-and-node}(l.)), triple points and $n$-fold points. 

Still, even among planar singularities there are more types of singularities to consider where this approach fails, namely ramphoids and cusps (cf. Figure \ref{fig:cusp-and-node}(r.)). This observation does not even take into account the types of singularities that can occur in higher dimensions. Regardless, by constructing a heuristic for singularities of the cusp type, we hope to be able to tackle a variety of different singularity types in higher dimensions. 

Assume that Newton's method does not converge to a point that is significantly different from $\Vector{p}_k$ when applied to the quadratic step $\Vector{p}_{k+1}\approx\Vector{p}_k+\alpha\Vector{v}_k+\frac{1}{2}\alpha^2 \Vector{a}_{k-1}$. In particular, this is the case when $\Vector{p}_k$ is a cusp, since the basin of attraction for Newton's metod corresponding to the cusp is locally given by a half space. Instead, we compute the point $\Vector{p}_{k+1}$ by taking a step in the directions $\Vector{p}_{k-1}+\alpha \Vector{a}_{k-1}$ and $\Vector{p}_{k+1}\approx\Vector{p}_{k-1}+\alpha \Vector{a}_{k-1}$. Since $\Vector{a}_{k-1}$ lies in $N_{\Vector{p}_{k-1}}g^{-1}(0)$ by construction, applying Newton's method to both of these linear steps for appropriate step sizes $\alpha >0$ should converge to both, $\Vector{p}_{k-1}$ and produce the subsequent point $\Vector{p}_{k+1}$ which lies on a different branch of $g^{-1}(0)$ than $\Vector{p}_{k-1}$. In total, this approach produces the sequence of points $\Vector{p}_{k-1}$, the singularity $\Vector{p}_k$ and $\Vector{p}_{k+1}$.

\subsection{Removing Blocked Flexes}
\label{section:blocked-flexes}

A major difficulty in computing continuous motions is to find initial infinitesimal flexes that extend to nontrivial continuous motions. Otherwise, if we only follow trivial or blocked flexes in the sense of Section \ref{section:higher-order-rigidity}, we may get stuck in the initial position.

We can compute a basis of the space of nontrivial infinitesimal flexes of a geometric constraint system $\mathcal{F}=(V,g,p)$ by first computing its infinitesimal flexes via the kernel of the rigidity matrix $R_\mathcal{F}(p)$. According to Section \ref{section:geometric-constraint-systems}, its trivial infinitesimal flexes are given by the infinitesimal translations and rotations applied to the realization $p$. We factor out the trivial flexes by first extending the basis to a basis of the space of the infinitesimal flexes and then only picking the basis vectors starting from position $r+1$.

To also remove the blocked flexes, we generate a basis $(\Vector{\dot{p}}_1,\,\dots,\,\Vector{\dot{p}}_r)$ of nontrivial infinitesimal flexes and a basis $(\omega_1,\,\dots,\,\omega_s)$ of equilibrium stresses and parametrize the basis using variables $\lambda_1,\,\dots,\,\lambda_r$ and $\mu_1,\,\dots,\,\mu_s$, respectively. Hence, we can express any nontrivial infinitesimal flex as $\Vector{\dot{p}}=\sum_{i=1}^r\lambda_i\cdot \Vector{\dot{p}}_i$ and any equilibrium stress as $\omega=\sum_{i=1}^s\mu_i\cdot \omega_j$. To find an unblocked flex, we consider the equation
\begin{eqnarray*}
	0\,= \,\omega^\top\cdot R_\mathcal{F}(\Vector{\dot{p}})\cdot\Vector{\dot{p}}&=&\sum_{i=1}^s\mu_i\cdot \omega_i^\top \cdot R_\mathcal{F}\left(\sum_{j=1}^r \lambda_j\cdot \Vector{\dot{p}}_j\right) \cdot \sum_{k=1}^r \lambda_k\cdot \Vector{\dot{p}}_k \\
&=&\sum_{i=1}^{s}\mu_i\cdot\underbrace{\sum_{j=1}^{r}\sum_{k=1}^r\lambda_j\cdot \lambda_k\cdot \left(\omega_i^\top\cdot R_\mathcal{F}\left(\Vector{\dot{p}}_j\right)\cdot\Vector{\dot{p}}_k\right)}_{=Q_i(\lambda_1,\dots,\lambda_r)}.
\end{eqnarray*}
We can move the sum outside of the rigidity matrix $R_\mathcal{F}$, as $g$ only consists of quadratic polynomials by assumption, so $R_\mathcal{F}$ only consists of linear forms. Here, $Q_i$ are quadratic polynomials in terms of the variables $\lambda_1,\dots,\lambda_r$. If we differentiate the this expression with respect to the variables $\mu_1,\,\dots,\mu_s$, we obtain the homogeneous, square polynomial system $Q_i(\Vector{\lambda})=0$ for $i=1,\dots,r$. 
 For the solution of a homogeneous polynomial, any scalar multiple is also a solution, so we normalize the set of solutions by adding the polynomial $\sum_{i=1}^r{\lambda_i}^2=1$. Doing so ensures that we find a nonzero solution. In total, we obtain the polynomial system
 \begin{align}
 	\label{eq:square-quadratic-system}
 	Q_1(\Vector{\lambda})\,=\,\dots\,=\,Q_s(\Vector{\lambda})\,=\,1-\sum_{i=1}^r{\lambda_i}^2\,=\,0
 \end{align}
 for which we collect the polynomials $Q_i$ in the vector $\Vector{Q}=(Q_1,\dots,Q_s)$. If the system of equations $(\ref{eq:square-quadratic-system})$ has a real solution $\lambda_1^*,\,\dots,\,\lambda_r^*$, then the corresponding flex $\sum_{i=1}^r\lambda_i^*\cdot \Vector{\dot{p}}_i$ cannot be blocked in the sense of Definition \ref{def:stress-and-second-order-rigidity}, so it has a chance of extending to a continuous motion. Conversely, any solution of this polynomial system can be blocked by an equilibrium stress, when choosing the parameters $\mu_i$ appropriately. Consequently, if the system of polynomial equations \eqref{eq:square-quadratic-system} has no solution, then the geometric constraint system $\mathcal{F}$ is second-order rigid. 
 
 Contrary to that observation, we are mainly interested in non-blocked flexes. For finding one, we need to solve the quadratic polynomial system \eqref{eq:square-quadratic-system}. There are several subtleties that come with this task. First of all, if the system only had isolated (complex) solutions, then we could use an approach based on Gröbner bases or homotopy continuation to find them. However, the dimension of the corresponding algebraic set is a priori unclear, so this does not work well. 
 
 \begin{example}
 	Consider the centrally symmetric bar-joint framework of the 3-prism graph from Figure \ref{fig:inf-rigid-prestress-stable-flexible}(c.) on the set of vertices $V=\{1,2,3,4,5,6\}$ with edges 
 	\[E\,=\,\{(1,2),\,(1,3),\,(1,4),\,(2,3),\,(2,5),\,(3,6),\,(4,5),\,(4,6),\,(5,6)\}\] 
 	given by the realization $p\, :\, V\rightarrow \mathbb{R}^2$ which maps
 	\begin{align*}
 		1\mapsto(1,0),\, 2\mapsto(-1/2,\sqrt{3}/2),\,3\mapsto (1/2,-\sqrt{3}/2),\,4\mapsto (2,0),\, 5\mapsto (-1,\sqrt{3}),\, 6\mapsto (-1,-\sqrt{3}).
 	\end{align*} 
 	We know that this framework is not infinitesimally rigid, as its space of nontrivial infinitesimal flexes contains the vector $\Vector{\dot{q}}=(0,0,0,0,0,0,0,2,-\sqrt{3},-1,\sqrt{3},-1)^\top$. Since it also has the equilibrium stress $\Vector{\omega}=(2,2,6,2,6,6,-1,-1,-1)^\top$, there is a chance that is is second-order rigid. These vectors can be symbolically computed using the Python package \juliainline{PyRigi} \cite{pyrigi}. From this information, we construct the polynomial system \eqref{eq:square-quadratic-system} which has the (monic) form
 	\[\lambda^2=0\quad\text{ and }\quad \lambda^2=1\]
 	for the 3-prism framework, which has no solution. By a previous observation, this implies that the 3-prism framework is second-order rigid.
 \end{example}
 Let us now denote the ideal generated by the polynomials from \eqref{eq:square-quadratic-system} by $\mathcal{I}_{\Vector{Q}}$. In order to find a single real solution, we instead use a trick from real algebraic geometry. For that, we pick a random point $\Vector{p}\in \mathbb{R}^r$ and consider the hypersphere $\sum_{i=1}^r(\lambda_i-p_i)^2=R^2$ centered around $\Vector{p}$ of variable radius $R$. We vary $R$ to find all tangencies of this hypersphere with $\mathcal{V}_{\mathbb{R}}(\mathcal{I}_{\Vector{Q}})$. This geometric construction is depicted in Figure \ref{fig:tikz-hypersphere-tangent}.
 
 \begin{figure}[h!]
\begin{tikzpicture}[scale=1.65]
	
	\def\R{0.666}                  
	\coordinate (P) at (-0.45,0.915); 
	\coordinate (C) at (-0.2,1.53);

	\draw[thick]
	(-1.5,0.3)
	.. controls (-0.5,2) and (0.0,-0.3) ..
	(1,1.15)
	.. controls (1.8,2.2) and (2.3,0.2) ..
	(3,1.4);
	
	\draw[thick] (C) circle (\R);
	
	\draw (C) -- (P) node[midway,right,yshift=-0.6mm,xshift=-0.65mm] {$R$};
	
	\fill (P) circle (1.35pt) node[below,yshift=-0.9mm,xshift=-0.9mm] {$\Vector{x}$};
	\fill (3,1.4) circle (0pt) node[above,xshift=1.35mm] {$\mathcal{V}_{\mathbb{R}}(\mathcal{I}_{\Vector{Q}})$};
	\fill (C) circle (1.35pt) node[above,yshift=0.7mm] {$\Vector{p}$};
\end{tikzpicture}
\vspace*{-10mm}
 \caption{Pictogram for the construction of a point $\Vector{x}$ on the real algebraic set $\mathcal{V}_\mathbb{R}(\mathcal{I}_{\Vector{Q}})$. The circle of radius $R$ centered in $\Vector{p}$ is tangent to $\mathcal{V}_\mathbb{R}(\mathcal{I}_{\Vector{Q}})$ at $\Vector{x}$.}
 \label{fig:tikz-hypersphere-tangent}
\end{figure}
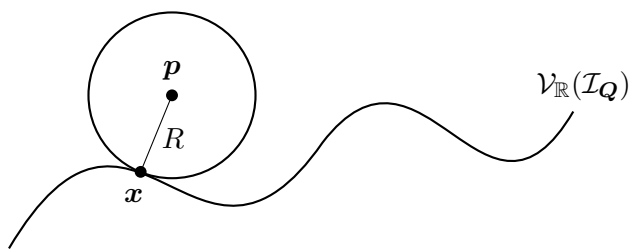

This geometric problem has an unneccessary redundancy: The radius of the hypersphere $R$ does not add information. For this reason, we reformulate the problem of finding a point $\Vector{x}\in \mathcal{V}_\mathbb{R}(\mathcal{I}_{\Vector{Q}})$ in terms of algebraic geometry as the \struc{Euclidean distance degree} problem (cf. \cite{euclideandistancedegree}). For a point $\Vector{p}$, we collect all critical points $\Vector{\lambda}\in \mathcal{V}(\mathcal{I}_{\Vector{Q}})$ of the Euclidean distance $||\Vector{p}-\Vector{\lambda}||$. This approach simplifies the geometric construction from Figure \ref{fig:tikz-hypersphere-tangent} slightly and allows us to exploit existing theoretical results, producing the polynomial ideal 
  \begin{align*}
 	\label{eq:square-quadratic-system-3}
 	\mathcal{I}^{\Vector{p}}_{\Vector{Q}}\,=\,\mathcal{I}_{\Vector{Q}}+\left\langle (c+1)\times (c+1)\text{-minors of }\begin{pmatrix}
 		\Vector{\lambda}-\Vector{p}\\
 		\Vector{\lambda}\\
 		\Vector{\nabla}\Vector{Q}^\top
 	\end{pmatrix}\right\rangle
 \end{align*}
 with $c=\mathrm{codim}(\mathcal{V}(\mathcal{I}_{\Vector{Q}}))$. The dimension of $\mathcal{V}(\mathcal{I}_{\Vector{Q}})$ can be found by evaluating the Jacobian $\Vector{\nabla}\Vector{Q}^\top$ at a generic point in $\mathbb{C}^r$ (cf. \cite[p.\ 239f.]{sommesewampler}). If $c=r$, then the latter ideal simplifies to $\langle 0\rangle$, as in this case the Jacobian $\Vector{\nabla}\Vector{Q}$ has full rank and the algebraic set $\mathcal{V}(\mathcal{I}_{\Vector{Q}})$ only consists of isolated points. 
 
For computing the Euclidean distance degree, we additionally need saturate the ideal $\mathcal{I}_{\Vector{Q}}^{\Vector{p}}$ with the singular points of $\mathcal{V}(\mathcal{I}_{\Vector{Q}})$. We saturate $\mathcal{I}^{\Vector{p}}_{\Vector{Q}}$ with the singular ideal $\mathcal{I}_{\text{sing}}$ to remove the contributions of the singular locus, which may otherwise contribute positive-dimensional solution sets. The singular points of $\mathcal{I}_{\Vector{Q}}$ are described by the ideal
\[
\mathcal{I}_{\text{sing}}\,=\,\mathcal{I}_{\Vector{Q}}+\left\langle c\times c\text{-minors of }
	\begin{pmatrix}
		\Vector{\lambda}\\
		\Vector{\nabla}\Vector{Q}^\top
	\end{pmatrix}
\right\rangle .
\]
This saturation produces $\mathcal{I}^{\Vector{p},\infty}_{\Vector{Q}}=~\mathcal{I}^{\Vector{p}}_{\Vector{Q}}\,:\,{\mathcal{I}_{\text{sing}}}^\infty$. For a generic point $\Vector{p}\in \mathbb{R}^r$, the algebraic set $\mathcal{V}(\mathcal{I}^{\Vector{p}, \infty}_{\Vector{Q}})$ contains only isolated points \cite{euclideandistancedegree}. If $\mathcal{V}_\mathbb{R}(\mathcal{I}_{\Vector{Q}})\neq \emptyset$ and $\Vector{p}\in \mathbb{R}^r$ is generic, then $\mathcal{V}_\mathbb{R}(\mathcal{I}^{\Vector{p}, \infty}_{\Vector{Q}})$ contains at least one point for all real connected components of $\mathcal{V}_\mathbb{R}(\mathcal{I}_{\Vector{Q}})$. Therefore, the algebraic set $\mathcal{V}(\mathcal{I}^{\Vector{p},\infty}_{\Vector{Q}})$ consists solely of complex points if and only if $\mathcal{V}_\mathbb{R}(\mathcal{I}_{\Vector{Q}})$ is empty. 
 
 Consequently, we use homotopy continuation to find \emph{all} isolated complex solutions in $\mathcal{V}(	\mathcal{I}^{\Vector{p}}_{\Vector{Q}})$ (cf. \cite{sommesewampler}). Note that we are again considering the ideal $\mathcal{I}^{\Vector{p}}_{\Vector{Q}}$ instead of the saturated ideal $\mathcal{I}^{\Vector{p},\infty}_{\Vector{Q}}$, since homotopy continuation only finds isolated solutions and the singularities are viable solutions of the system of equations corresponding to $\mathcal{I}_{\Vector{Q}}^{\Vector{p}}$. Using interval arithmetic, we can then distinguish and certify whether a given solution is real or complex (cf. \cite{realeuclideandistancedegree}). By filtering out all non-real isolated solutions from $\mathcal{V}(	\mathcal{I}^{\Vector{p}}_{\Vector{Q}})$, we can produce a real solution of $\mathcal{V}_\mathbb{R}(\mathcal{I}_{\Vector{Q}})$, provided that this algebraic set is non-empty, or alternatively certify the emptiness of $\mathcal{V}_\mathbb{R}(\mathcal{I}_{\Vector{Q}})$.
 
 To recap, we first find the nontrivial infinitesimal flexes of $\mathcal{F}$ by factoring out the infinitesimal translations and rotations from $\mathcal{F}$'s infinitesimal flexes. From these flexes, we construct a non-blocked flex -- if existent -- by computing a point $(\lambda_1^*,\,\dots,\,\lambda_r^*)$ on the zero-dimensional algebraic set $\mathcal{V}_{\mathbb{R}}(	\mathcal{I}^{\Vector{p}}_{\Vector{Q}})$ using the Euclidean distance degree in combination with homotopy continuation, which produces a real, infinitesimal flex $\sum_{i=1}^r\lambda_i^*\cdot \Vector{\dot{p}_i}$.

\subsection{The Software Package DeformationPaths.jl}
\label{section:DeformationPaths.jl}

Together with the heuristics and algorithmic techniques discussed in Sections \ref{section:gauss-newton-method}--\ref{section:blocked-flexes}, Algorithm \ref{alg:approx-deformation-paths} is implemented within the Julia package \juliainline{DeformationPaths.jl}. In applications, the term \struc{deformation paths} is often used synonymous with ``continuous motions''. The package comes with various explicit geometric constraint systems that are implemented as separate classes with intuitive instantiation, ranging from bar-joint frameworks and polytopes to sphere packings and simplicial complexes. The package not only lets the user approximate continuous motions for these geometric constraint systems, but also allows checking for the infinitesimal, second-order and continuous rigidity. It comes with a multiple options for transformations, visualization and animations of continuous motions. In the Appendix Section \ref{section:examples-of-gcs}, the usage of \juliainline{DeformationPaths.jl} is explained in far greater detail. In particular, that section introduces the explicit geometric constraint systems that are implemented in the package and demonstrate how to interact with the software.
As an example of how to interact with the package \juliainline{DeformationPaths.jl}, let us consider the 3-prism instantiated using the code

\begin{julia}
F = Framework(
	[(1,2), (1,3), (2,3), (4,5), (5,6), (4,6), (1,4), (2,5), (3,6)],
	[0 0 sqrt(3)/2 1 1 1+sqrt(3)/2; 0 1 0.5 0 1 0.5]; 
	pinned_vertices=[1,4]
)
plot(F, "3-Prism_1"; edge_color=teal, flex_color=coral, plot_flexes=true)

D = DeformationPath(F, [-1], 30; step_size=0.01)
F2 = Framework(F.bars, D.motion_matrices[end]; pinned_vertices=[1,4])
plot(F2, "3-Prism_2"; edge_color=teal, flex_color=coral, plot_flexes=true)

D2 = DeformationPath(F2, [-1], 30; step_size=0.01)
F3 = Framework(F.bars, D2.motion_matrices[end]; pinned_vertices=[1,4]) 
plot(F3, "3-Prism_3"; edge_color=teal, flex_color=coral, plot_flexes=true)
\end{julia}

\noindent This code plots the bar-joint frameworks \juliainline{F}, \juliainline{F2} and \juliainline{F3} obtained from a continuous motion of \juliainline{F} with teal edges and coral infinitesimal flexes. It saves the resulting frame as a \juliainline{png} file called \juliainline{3-Prism_i.png} for the counter $i\in \{1,2,3\}$. The frameworks come with the optional argument \juliainline{pinned_vertices=[1,4]}, which appends linear constraints to the geometric constraint system that fix the coordinates of the vertices $1$ and $4$. The two continuous motions \juliainline{D} and \juliainline{D2} are computed with the fixed step size \juliainline{0.01} and the total number of steps \juliainline{30}. The vector \juliainline{flex_mult=[-1]} indicates the selection of the infinitesimal flex: first, the nontrivial infinitesimal flexes are computed and stored in \juliainline{flex_matrix}; subsequently, the infinitesimal flex for the \juliainline{plot} method is computed as \juliainline{flex_matrix*flex_mult}. The images resulting from these lines of code are depicted in Figure \ref{fig:3-prism-pictures}.

\begin{figure}[h!]
	\centering
	\includegraphics[width=0.328\textwidth]{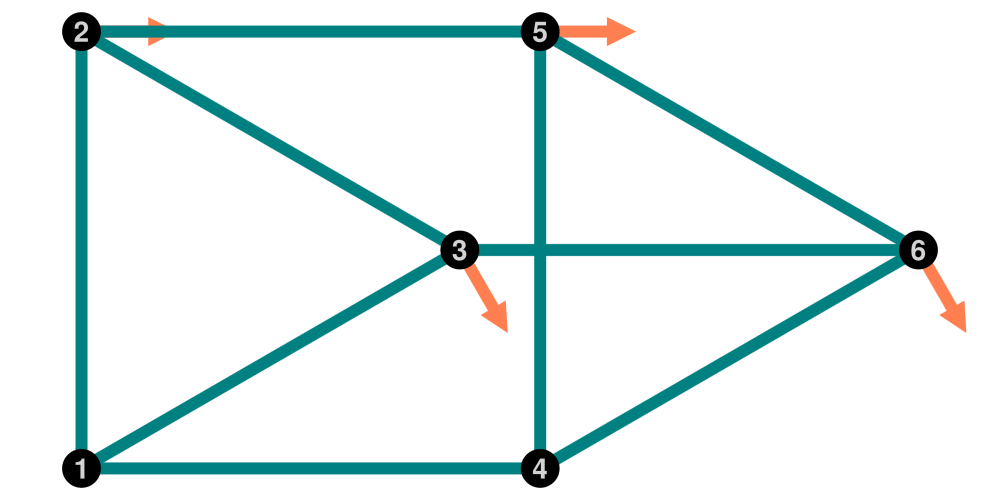}
	\includegraphics[width=0.328\textwidth]{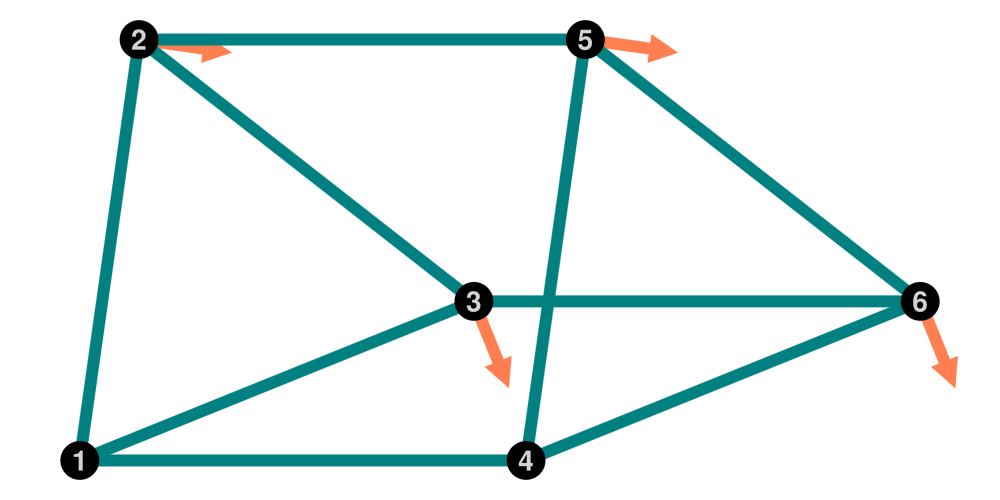}
	\includegraphics[width=0.328\textwidth]{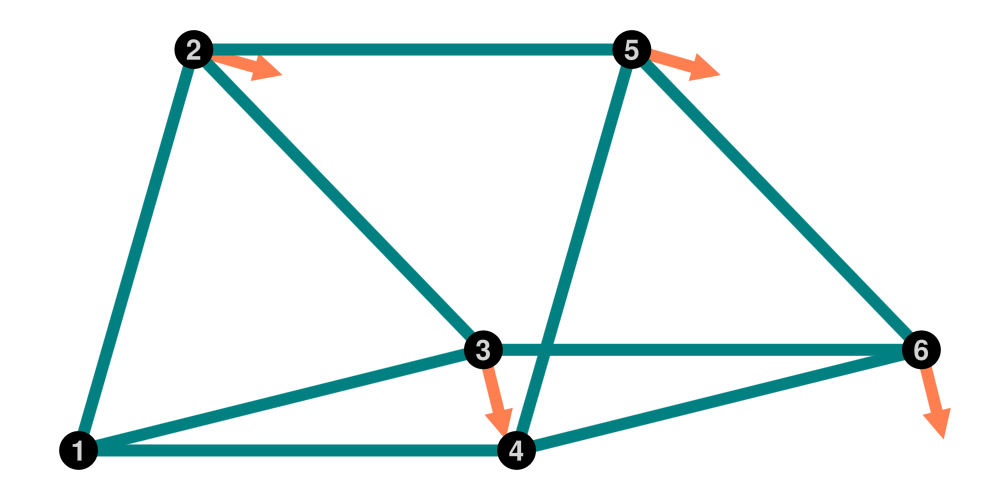}
	\caption{Three frames from a continuous motion of the 3-prism bar-joint framework.}
	\label{fig:3-prism-pictures}
\end{figure}

Beyond still frames, \juliainline{DeformationPaths.jl} has the option to create animations of continuous motions using the generic \juliainline{animate} method, which takes the same keywords as the \juliainline{plot} routine. For more details, you can consult the online documentation\footnote{\url{https://matthiashimmelmann.github.io/DeformationPaths.jl/}} of the package.

\section{Outlook}
\label{section:outlook}
In this article we demonstrate that the combination of Riemannian optimization and numerical algebraic geometry enables us to approximate physically-meaningful continuous motions of a wide range of geometric constraint systems. We show that these techniques even work when the corresponding configuration space contains singularities, in the presence of algebraic dependencies between the polynomial constraints or when not all nontrivial infinitesimal flexes extend to continuous motions. By implementing the corresponding algorithms in the Julia package \juliainline{DeformationPaths.jl}, we illustrate that the corresponding algorithms are effective and can be applied to various geometric constraint systems.

Even though the techniques we develop in this article let us reliably and effectively approximate continuous motions of relatively general geometric constraint systems, the overall goal is quite ambitious. Consequently, several questions remain open and fall outside the scope of this article. In what follows, we collect several open problems that we hope will inspire future research.
\begin{itemize}
	\item Does the second-order rigidity of a geometric constraint system imply its rigidity?
	\item The addition of further and more general geometric constraint systems to the Julia package in the form of classes would increase the package's relevance and the amount of its practical applications. In particular, adding spherical packings with different radii or packings of ellipsoidal particles to the package would make for exciting additions with applications in soft matter physics.
	\item In the same context, more general types of hypergraphs should be implemented. For instance, $k$-dimensional volume-constrained simplicial complexes in $\mathbb{R}^d$ for $k<d$ or $d$-dimensional polyhedral complexes in $\mathbb{R}^d$ are highly interesting and would generalize the possible geometric objects significantly. Still, constructing the geometric constraints is nontrivial in these cases and requires a careful analysis.
	\item The algorithmic environment can be adapted to find (paradoxically) flexible circle packings on the sphere, or, more generally, flexible geometric constraint systems that have more constraints than degrees of freedom. A comprehensive empirical study thus has the potential to reveal novel flesible structures.
\end{itemize}

\section*{Acknowledgments}

I am grateful for the financial support by the National Science Foundation under Grant No.\ DMS-1929284 while I was in residence at the Institute for Computational and Experimental Research in Mathematics in Providence, RI, during the Geometry of Materials, Packings and Rigid Frameworks semester program.

\section*{Data Availability Statement}

The software package \texttt{DeformationPaths.jl} supporting the findings of this article is openly and permanently available in Zenodo \cite{deformationpaths}.

\printbibliography

\appendix
\markboth{APPENDIX}{APPENDIX}
\section{Infinitesimal Rigidity of Geometric Constraint Systems}
\label{appendix:infrig}
	By the assumption that the polynomials in a geometric constraint systems are invariant under Euclidean isometries made in Definition \ref{def:constraint-system-in-Rd}, the kernel of the rigidity matrix $R_\mathcal{F}(p)$ usually contains the trivial infinitesimally flexes via group actions of the Euclidean group applied to the point set $\{p(1),\dots,p(n)\}$. Yet, if there are not sufficiently many vertices, the kernel of the rigidity matrix may be smaller than expected. In these cases, the kernel still contain the Lie algebra of Euclidean transformations given by
	\begin{itemize}
		\item translations $(e_i)_{j\in [n]}$ for $i\in [d]$ and the standard basis $(e_1,\dots, e_d)$ of $\mathbb{R}^d$ and
		\item infinitesimal rotations $(Q\cdot p(j))_{j\in [n]}$ for any $d\times d$ skew-symmetric matrix $Q$.
	\end{itemize}
	
	There are exactly $(\ell_p+1)(2d-\ell_p)/2$ Euclidean transformations for $\ell_p=\dim(\mathrm{aff}(p(1),\dots,p(n)))$ (cf. \cite{Asimow1978TheRO}). These observations are summarized in the following two lemmas. 
	
	\begin{lemma}
		\label{lem:trivial-flexes-lie-in-kernel}
		For any geometric constraint system $\mathcal{F}=(V,g,p)$, the trivial infinitesimal flexes of $\mathcal{F}$ lie in the kernel of $R_\mathcal{F}(p)$.
		\begin{proof}
			By Definition \ref{def:constraint-system-in-Rd}, $g$ is invariant under Euclidean transformations, implying that 
			\[g(\left(F(t)\cdot p(i)+t\cdot \tau\right)_{i\in[n]})=0\] 
			for a smooth curve of rotation matrices $F(t)\in \mathbb{R}^{d\times d}$ with $F(0)=I$, translations $\tau\in \mathbb{R}^d$ and $t\in (-\varepsilon,\varepsilon)$ for some $\varepsilon>0$. Skew-symmetric matrices represent infinitesimal rotations, since the Lie group of orthogonal matrices $\mathcal{O}(d)$ has the skew-symmetric matrices as its Lie algebra \cite[Prop.\ 3.24]{liealgebras}, which represents the tangent space at the identity matrix.
			
			As a consequence, $F'(t)$ is skew-symmetric for all $t$ and for any skew-symmetric matrix there exists a curve $F$ so that $F'(0)$ is that matrix. For the differentiable curve $\alpha(t)=F(t)\cdot p(i)+t\cdot \tau$ we then find that
			\[0~=~\frac{\partial}{\partial t}g(\alpha(t)) \lvert_{t=0} ~=~R_\mathcal{F}(p)\cdot (F'(0)\cdot p(i)+\tau),\]
			implying that $(F'(0)\cdot p(i)+\tau)$ lies in the kernel of the rigidity matrix $R_\mathcal{F}(p)$. This proves the claim. 
		\end{proof}
	\end{lemma}
	
	\begin{example}
		Lemma \ref{lem:trivial-flexes-lie-in-kernel} is not in conflict with the previous observation that the number of trivial infinitesimal flexes depends on the number of vertices $n$. Consider, for instance, a line segment $\overline{p(1)p(2)}$ as a bar-joint framework 
		\[\mathcal{F}=(\{1,2\},\,(||\Vector{p}_1-\Vector{p}_2||-||p(1)-p(2)||),\,i \mapsto p(i))\] 
		in $\mathbb{R}^3$. The group of three translations in $\mathbb{R}^3$ completely lies in the kernel of the corresponding rigidity matrix. As for the rotations, we may without loss of generality fix the position of $\Vector{p}_1$ as the center of rotation. The position of $\Vector{p}_2$ is then constrained to a sphere of radius $||p(2)-p(1)||$. If $p(2)\neq p(1)$, this sphere can be generated by two angles as a 2-dimensional manifold, meaning that the position of $\Vector{p}_2$ has two degrees of freedom. In total, there are 5 elements in the kernel of $R_\mathcal{F}(p)$, which are all the trivial infinitesimal flexes of $\mathcal{F}$.
	\end{example}
	
	The fact that the polynomial map $g$ describing a geometric constraint system's realization space purely consists of polynomials puts this problem in the realm of algebraic geometry, making related techniques applicable. Since the infinitesimal flexes are further given by tangent vectors associated with the constraint set $g^{-1}(0)$, the infinitesimal flexes lie in the tangent space at the geometric constraint system's realization in $\mathbb{R}^{dn}$. Moreover, Definition \ref{def:infinitessimal-rigidity} highlights the necessity for the requirement that $g$ consists only of quadratic polynomials. In general, the cokernel of a polynomial system's Jacobian encodes linear constraints on the image of the polynomial map and can therefore reduce the amount of flexes that can extend to continuous paths. While it is, in priciple, possible to perform a rigidity analysis with any type of constraints, the linearized analysis here heavily relies on the fact that the underlying constraints are quadratic. 
	
	We can make the connection of infinitesimally rigid geometric constraint systems with the rank of their Jacobian even more explicit, as demonstrated in the following generalization of a classical theorem by Asimow-Roth \cite{Asimow1978TheRO}.

	\begin{lemma}
		\label{lem:rank-rigiditiy-matrix}
		The rank of the rigidity matrix associated with a $d$-dimensional geometric constraint system $\mathcal{F}=(V,g,p)$ is bounded above by 
		\[\mathrm{rank}(R_\mathcal{F}(p)) \leq nd-(\ell_p+1)(2d-\ell_p)/2\] 
		for $\ell_p=\dim(\mathrm{aff}(p(1),\dots,p(n)))$. Any infinitesimally rigid realization is a regular point of the constraint set $g^{-1}(0)$.
		Moreover, $\mathcal{F}$ is infinitesimally rigid if and only if the rank of the rigidity matrix is $nd-(\ell_p+1)(2d-\ell_p)/2$.
		\begin{proof}
			By a previous observation, $\mathcal{F}$'s trivial infinitesimal flexes are generated by translations, rotations and reflections. 
			All of these infinitesimal flexes also
			lie in the kernel of the rigidity matrix $R_\mathcal{F}(p)$, since the polynomials in $g$ are invariant under Euclidean isometries by definition. Then, we saturate $g$ with the bar-equations $||p(i)-p(j)||^2-||\Vector{p}_{i}-\Vector{p}_{j}||^2=0$ for all $i,j\in V$. By Asimow-Roth \cite{Asimow1978TheRO}, the kernel of the latter system's Jacobian exactly contains the trivial motions. Since this operation can only increase the rank of the Jacobian and since the Euclidean isometries on the point set $\{p(1),\dots,p(n)\}$ define a smooth manifold of dimension $(\ell_p+1)(2d-\ell_p)/2$, the rank of the rigidity matrix is bounded above by $dn-(\ell_p+1)(2d-\ell_p)/2$ and this number can be attained. 
			
			Now assume that $\mathcal{F}$ is infinitesimally rigid, meaning that the kernel of $R_\mathcal{F}(p)$ exactly consists of trivial infinitesimal flexes. The fact that $g$ purely consists of polynomials implies that there exists an open neighborhood $U(p)$ of $p$ such that $U(p) \,\cap\, g^{-1}(0)$ defines a smooth $\left((\ell_p+1)(2d-\ell_p)/2\right)$-dimensional manifold, since the rank of the Jacobian is maximal and would only drop in a proper Zariski-closed subset of codimension at least 1 (cf. \cite[Thm. 5.3]{hartshorne}).
			
			Since the Euclidean motions on the point set $(p(1),\dots,p(n))$ also define a smooth manifold of dimension $\left((\ell_p+1)(2d-\ell_p)/2\right)$ which is contained in $g^{-1}(0)$ and since $g$ is invariant under Euclidean transformations, this implies that the corank of infinitesimally rigid realizations is necessarily $\left((\ell_p+1)(2d-\ell_p)/2\right)$. 
			
			Conversely, assume that the corank of the rigidity matrix is $\left((\ell_p+1)(2d-\ell_p)/2\right)$. As a consequence, $p$ is a regular point of the constraint set $g^{-1}(0)$ (cf. \cite[§1.5]{hartshorne}). Since the manifold of Euclidean isometries is $\left((\ell_p+1)(2d-\ell_p)/2\right)$-dimensional and is contained in $g^{-1}(0)$, this implies that the kernel of $R_\mathcal{F}(p)$ only consists of trivial infinitesimal flexes, so $\mathcal{F}$ is infinitesimally rigid.
		\end{proof}
	\end{lemma}
	
	This lemma gives us an idea how large the kernel of the rigidity matrix is for infinitesimally rigid systems. Yet, infinitesimal rigidity is only a sufficient condition. For geometric constraint systems, their continuous rigidity (see Definition \ref{def:continuous-motion}) and flexibility are related in the following way:
	\begin{lemma}
		\label{lem:existence-continuous-motion}
		Let $\mathcal{F}=(V,g,p)$ be a geometric constraint system. Then the following are equivalent:
		\begin{enumerate}
			\item $\mathcal{F}$ is not rigid.
			\item There exists a non-trivial continuous motion $\alpha:[0,1]\rightarrow \mathbb{R}^{dn}$ in $g^{-1}(0)$ so that $\alpha(0)=p$, which is also analytic.
		\end{enumerate}
		\begin{proof}
			Assume that $\mathcal{F}$ is flexible. By Definition \ref{def:continuous-motion}, this implies that arbitrarily close to $p$ there are equivalent realizations that are not congruent to $p$. The constraint set $g^{-1}(0)$ defines an algebraic set with $p\in g^{-1}(0)$ that contains the space of trivial continuous motions as a proper subset. The latter set is a algebraic subset of $g^{-1}(0)$, since it can be written as the intersection of $g^{-1}(0)$ with the constraint set corresponding to the bar-joint framework on the complete graph with realization $p$, exactly describing the realizations which are congruent to $p$. Thus, we can apply \cite[Lem.\ 18.3]{algebraicapproximationcurves}, which ensures the existence of an analytic path in $g^{-1}(0)$ through $p$, which clearly also is continuous.
			
			Conversely, assume that there exists a non-trivial continuous motion $\alpha:[0,1]\rightarrow \mathbb{R}^{dn}$ with $\alpha(0)=p$. In other words, all realizations $\alpha(t)$ are equivalent, but not congruent to $p$ for $t>0$. Since $\alpha$ is continuous, this implies that arbitrarily close to $p$ there are equivalent, but not congruent, realizations to $p$, which implies that $\mathcal{F}$ is not rigid.
		\end{proof}
	\end{lemma}
	In other words, for flexible geometric constraint systems there always exist nontrivial continuous motions which can be differentiated arbitrarily often.

\section{Examples of Geometric Constraint Systems}
\label{section:examples-of-gcs}

The theoretical aspects of this article are accompanied by the development of the Julia package \juliainline{DeformationPaths.jl} \cite{deformationpaths}, which allows the user to create a wide range of geometric constraint systems, check their rigidity properties, compute approximate continuous motions and visualize the results.

Notably, the images from this section have been generated using the visualization capabilites of \juliainline{DeformationPaths.jl}. Besides having the capacity to plot configurations for all of the geometric constraint systems that are featured in this section, the package comes with tools to animate continuous motions for them. Some exemplary animations can be found in the online documentation\footnote{\url{https://matthiashimmelmann.github.io/DeformationPaths.jl/}}.

In this section, we highlight several popular classes of geometric constraint systems and how objects from these classes can be instantiated in the Julia package \juliainline{DeformationPaths.jl}. In particular, we are going to discuss, where applicable, how to modify the underlying geometric constraint systems so that their intrinsic trivial infinitesimal flexes are identical to the ones that we describe in Section \ref{section:geometric-constraint-systems}.

\subsection{Framework Materials} 

\,\struc{Framework materials} describe a class of geometric constraint systems whose primary constraints come from embedded graphs with distance constraints on their edges. These structures may come with additional constraints such as restricting them to a surface or prescribing angles between edges.

\paragraph{Bar-Joint Frameworks}
A \struc{bar-joint framework} is a classical geometric constraint system, which is given as a tuple $\mathcal{F}=(G,p)$ consisting of a graph $G=(V,E)$ and a realization $p:V\rightarrow \mathbb{R}^d$. Its polynomial constraint set consists of the edge length equations
\[||\Vector{p}_u-\Vector{p}_v||\,=\,||p(u)-p(v)||\quad\quad\text{ for all }uv\in E.\]
In addition, we can pin vertices by adding linear constraints of the form $\Vector{p}_u = p(u)$. Although this gives to a relatively simple geometric constraint system, the applications of frameworks are diverse. They are used in civil engineering to study the stiffness of structures, in granular materials to understand the jammedness of packings, find applications in quasicrystalline structures and are used in kinematics or in the design of robots for creating prescribed mechanism curves. 

\begin{figure}[h!]
	\begin{minipage}{0.59\linewidth}
		\includegraphics[width=\linewidth]{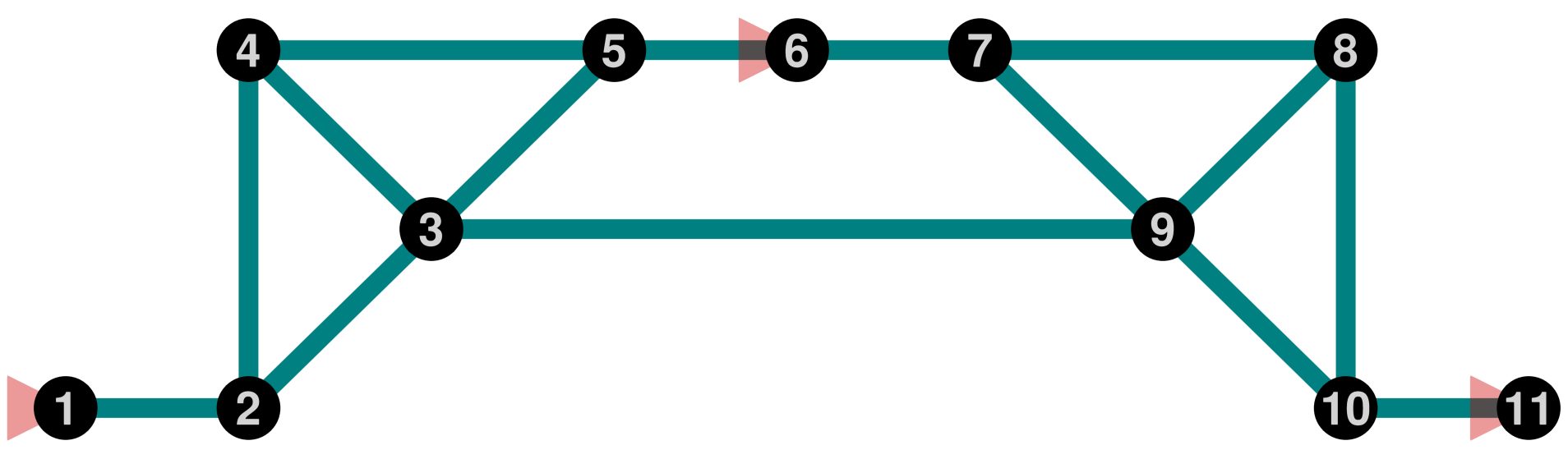}
	\end{minipage}\hfill
	\begin{minipage}{0.355\linewidth}
		\includegraphics[width=\linewidth]{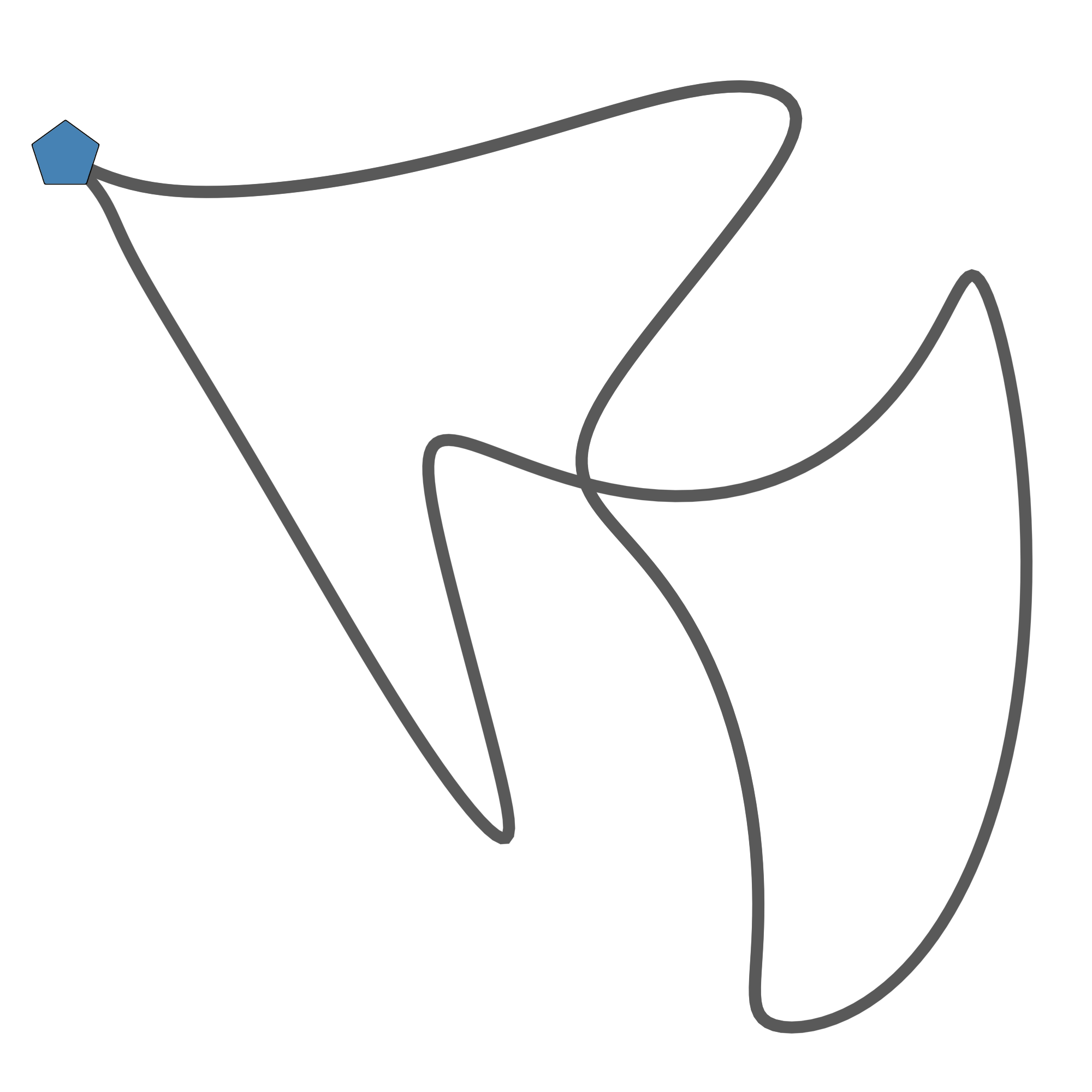}
	\end{minipage}
	
	\caption{\textbf{(left)} A realization of the Double Watt mechanism and \textbf{(right)} a random projection of its deformation space. The deformation space contains a cusp singularity (blue pentagon).}
	\label{fig:double-watt}
\end{figure}

A particularly fascinating framework is given by the Double Watt mechanism, which is depicted in Figure \ref{fig:double-watt}(l.). This framework has the curious property that it is third-order rigid, yet flexible \cite{higherorderrigidity} (a discussion of higher-order rigidity is omitted for the sake of brevity). As a result, the realisation space contains a singularity, which can be seen in Figure \ref{fig:double-watt}(r.). In the package \juliainline{DeformationPaths.jl}, a corresponding object can be instantiated using the following, simplified code:

\begin{julia}
F = Framework([[1,2], [2,3], ..., [9,10], [10,11]], 
	[0 1 2 1 3 4 5 7 6 7 8; 0 0 1 2 2 2 2 2 1 0 0]; pinned_vertices=[1,6,11]
)
D = DeformationPath(F, [], 500; step_size=0.05)
\end{julia}
This creates a continuous motion of \juliainline{F}, which comes with the \juliainline{pinned_vertices=[1,5,6]}, corresponding to the addition of linear constraints to $g$ that fix the position of the vertices 1, 5 and 6. Then, the \juliainline{DeformationPath} function is given an initial step size of \juliainline{0.05} and \juliainline{500} total steps as arguments. This combination of step size and steps is sufficient to obvserve the entire orbit. The images from Figure \ref{fig:double-watt} were created using the \juliainline{plot} and \juliainline{project_deformation_random} routines, respectively.

Notably, we are able to escape and traverse the singularity in \juliainline{F}'s deformation space using the theoretical ideas developed in Section \ref{section:singularities}. By passing an empty list as the second keyword, we also let the algorithm decide and compute a nonblocked initial flex (cf. Section \ref{section:blocked-flexes}).

\paragraph{Frameworks Constrained to Surfaces}

If we additionally constrain the vertices to lie on a smooth (hyper-)surface, the frameworks' behavior may significantly change. Depending on the type of the surface, the number of trivial infinitesimal flexes of a bar-joint framework on a surface in $\mathbb{R}^3$ can take any value from 0 (for generic surfaces) to 3 (for disjoint unions of planes and concentric spheres) \cite{frameworksonsurfaces}. The condition that the vertices lie on an implicitly-defined surface $\mathcal{M}=\{(x,y,z)\,:\,f(x,y,z)=0\}$ can be imposed via
\[f({\Vector{p}}_u)\,=\,0\quad\quad\text{for all }u\in V.\]
However, there are two issues with this approach in the context of Definition \ref{def:constraint-system-in-Rd}: First, $f$ is not necessarily a quadratic polynomial and it is typically not invariant under the ambient Euclidean isometries. 

Still, the rigid motions of the hypersurface can usually be factored out by constraining a subset of the vertices to certain affine hyperplanes. The number and type of hyperplanes that are necessary is determined by the \struc{type} of the surface, i.e.\ the dimension of its group of isometries arising as isometries of $\mathbb{R}^3$ that act tangentially at every point of $g^{-1}(0)$ (cf.\ \cite{frameworksonsurfaces}). For instance, the unit sphere and the plane have type 3, the cylinder and flat torus have type $2$, the double-cone and the torus of revolution have type $1$ and general surfaces have type $0$.

The heuristics from Section \ref{section:novel-approaches-approximating-deformation-paths} dependent on second-order results are unavailable for the class \juliainline{FrameworkOnSurface}, since it seems likely that the results depend on the chosen hypersurface, which we do not want to restrict at this moment. 

Constructing a (flexible) 4-bar linkage on the one-sheeted hyperboloid is possible using the following commands:
\begin{julia}
F = FrameworkOnSurface([[1,2],[2,3],[3,4],[1,4]], 
	[-sqrt(1/2) -1 0 sqrt(1/2); -sqrt(1/2) 0 1 sqrt(1/2); -1 0 0 1],
	x->x[1]^2+x[2]^2-x[3]^2-1
)
D = DeformationPath(F, [1,1], 100; step_size=0.035)
\end{julia}
You can find a frame from the \juliainline{animate} routine applied to \juliainline{D} in Figure \ref{fig:framework-hyperboloid}(l.). Since the one-sheeted hyperboloid has a 2-dimensional space of rigid motions instead of the usual 3-dimensional space of $\mathbb{R}^2$, the 4-bar linkage has a 2-dimensional space of nontrivial infinitesimal flexes instead of the typical one-dimensional space. We select an equally-weighted linear combination of this linear space's basis vectors by passing the list \juliainline{[1,1]} to the algorithm. 

\begin{figure}[h!]
	\begin{minipage}{0.33\linewidth}
		\includegraphics[width=\linewidth]{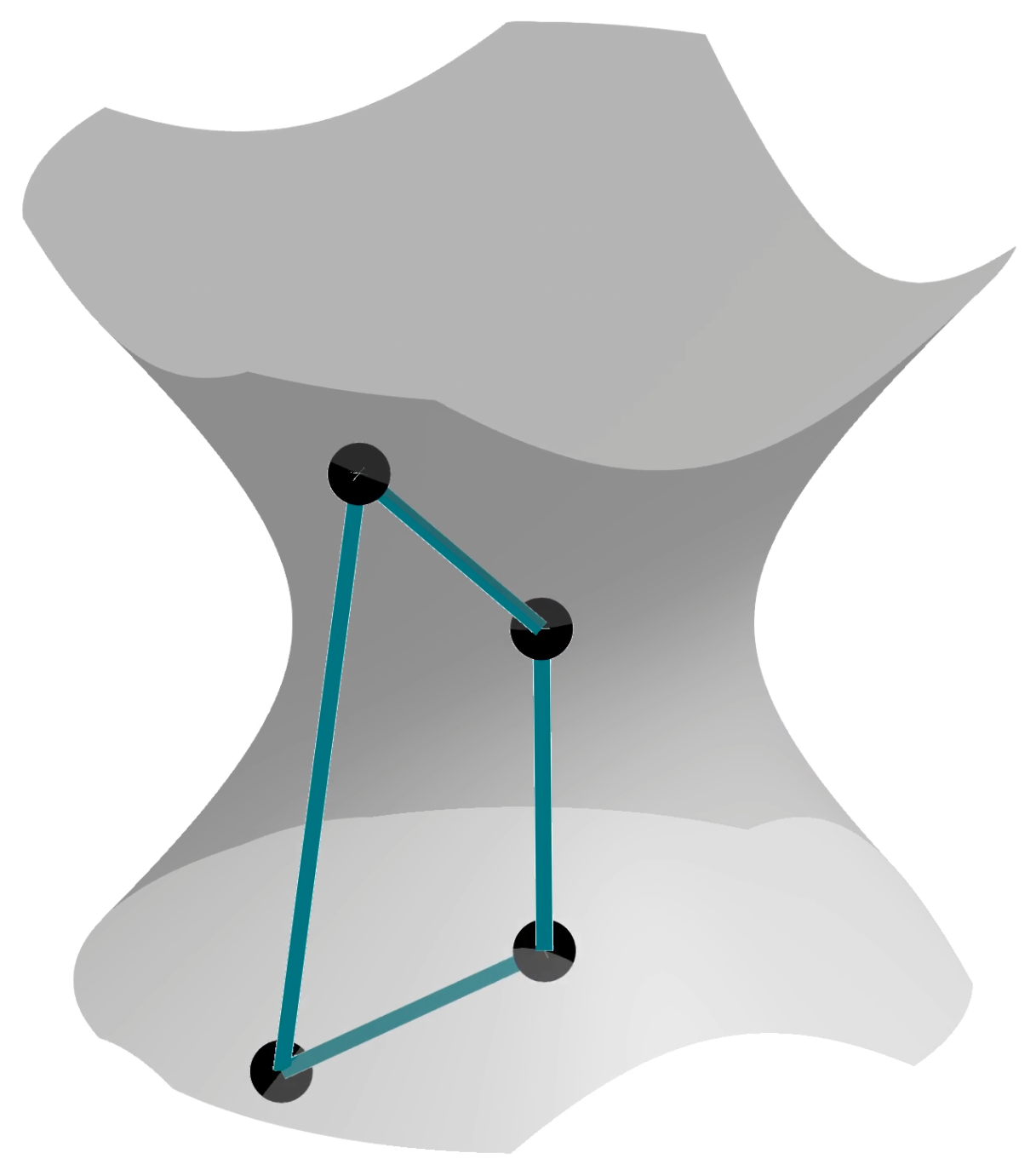}
	\end{minipage}\hspace*{8mm}
	\begin{minipage}{0.55\linewidth}
		\includegraphics[width=\linewidth]{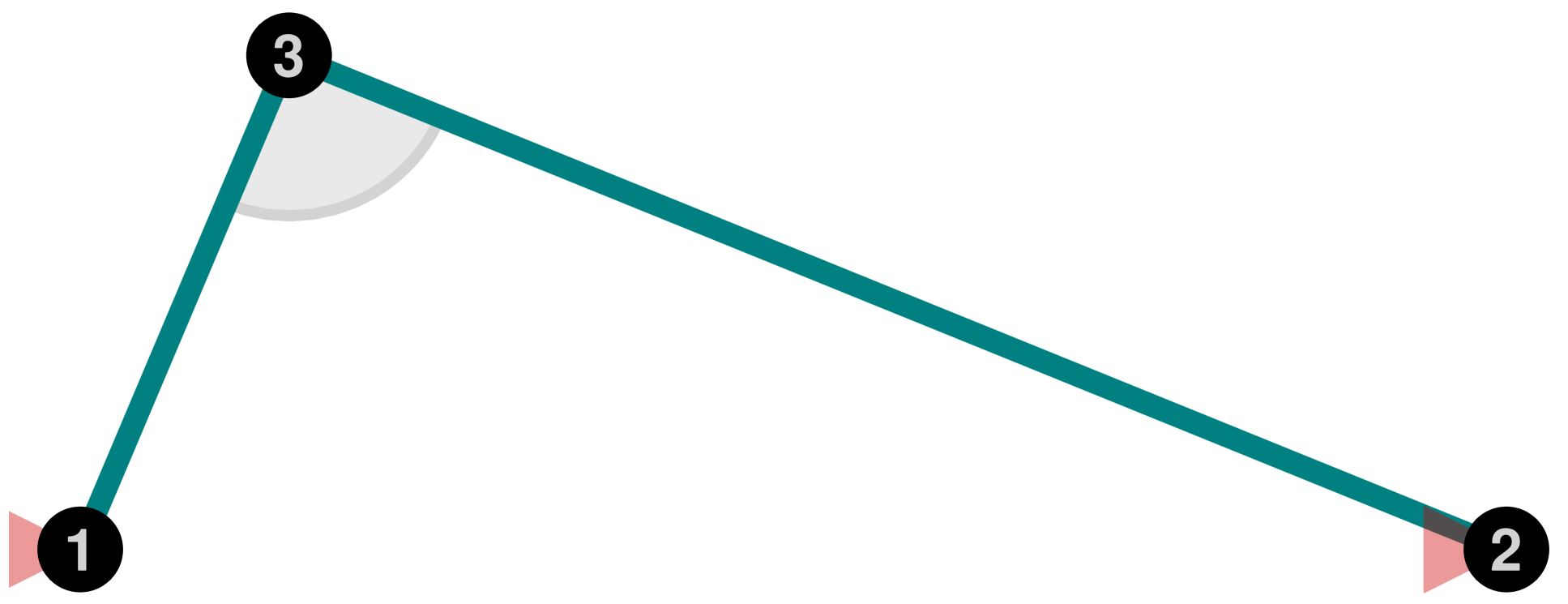}
	\end{minipage}
	
	\caption{\textbf{(left)} A 4-bar linkage constrained to the one-sheeted hyperboloid. \textbf{(right)} A 2-bar linkage with two pinned vertices reproducing Thales' Theorem.}
	\label{fig:framework-hyperboloid}
\end{figure}
\paragraph{Angular Frameworks}

Instead of distances between vertices, we can also consider angles between sets of vertices, leading to \struc{angle-constrained frameworks} (cf. \cite{dewar2024angularconstraintsplanarframeworks}). Such angular frameworks find application in formation control, where they are used to align a multi-agent networks. Similar to bar-joint frameworks, these geometric constraint systems are given by a 3-uniform hypergraphs $G=(V,E)$ with $E\subseteq {\binom{V}{3}}$ and a realization $p:V\rightarrow\mathbb{R}^d$. The underlying polynomial system is given by the angle constraints
\[ \frac{\left\langle(\Vector{p}_u-\Vector{p}_v),~(\Vector{p}_w-\Vector{p}_v)\right\rangle}{\left\lvert\left\lvert \Vector{p}_u-\Vector{p}_v\right\lvert\right\lvert\cdot \left\lvert\left\lvert \Vector{p}_w-\Vector{p}_v\right\lvert\right\lvert}~=~ \frac{\left\langle(p(u)-p(v)),~(p(w)-p(v))\right\rangle}{\left\lvert\left\lvert p(u)-p(v)\right\lvert\right\lvert\cdot \left\lvert\left\lvert p(w)-p(v)\right\lvert\right\lvert}\quad\quad\text{for all hyperedges }uvw\in E.\]
These are algebraic, but not polynomial, equations. \juliainline{HomotopyContinuation.jl} is able to handle such simple non-polynomial structures as well, so that is not an issue. Moreover, the multiplication of the realization by a positive scalar is technically also a trivial motion in this case. Hence, we typically pin two vertices in place to factor out this redundancy.

For instance, Thales' theorem can be reproduced in \juliainline{DeformationPaths.jl} via

\begin{julia}
F = AngularFramework([[1,3,2]], [-1 1 -sqrt(1/2); 0 0 sqrt(1/2)]; [1,2])
D = DeformationPath(F, [1], 250; step_size=0.025)
\end{julia}
which prescribes the angle $\angle (1,3,2)$ and pins the vertices \juliainline{[1,2]}. The resulting angular framework is depicted in Figure \ref{fig:framework-hyperboloid}(r.)

\subsection{Point-Hyperplane Frameworks}
A general \struc{point-hyperplane framework} consists of points and hyperplanes in $\mathbb{R}^d$. The possible corresponding geometric constraints are given by distances between points, between points and hyperplanes and angles between hyperplanes (cf. \cite{EJNSTW}). In the following, we are going to focus on two particular types of geometric constraint systems.

\paragraph{Polytopes with Edge Length Constraints}

\ \struc{Polytopes} provide one of the best-studied geometric objects and which has already fascinated the ancient Greeks, which assigned ethereal properties to the Platonic solids (cf. \cite[p.\ 47]{fuenfplatonischekoerper}). The \struc{combinatorial type} of a 3-dimensional polytope can be described as a tuple $(V,F)$ of vertices $V$ and (abstract) faces $F\subset \mathcal{P}(V)$ so that the following properties are satisfied:
\begin{itemize}
	\item each face $\sigma\in F$ satisfies $|\sigma|\geq 3$,
	\item the intersection $\sigma_1 \cap \sigma_2$ of two facets $\sigma_1,\sigma_2\in F$ may only contain $0$, $1$ or $2$ vertices,
	\item the polytope's edges $E$ are given by 2-element intersections of two facets and
	\item the Euler characteristic count for polytopes $|V|-|E|+|F|=2$ holds.
\end{itemize}
A realization of a 3-dimensional polytope is an embedding $(p,a):V\times F\rightarrow \mathbb{R}^{|V|\cdot 3}\times \mathbb{R}^{|F|\cdot 3}$ so that vertices in the same face $\sigma\in F$ must lie on the same affine hyperplane with facet normal $a$. 

Following Himmelmann, Schulze and Winter \cite{himmelmannschulzewinter2025rigiditypolytopesedgelength}, we additionally fix the edges on the framework $(G_E,\,p)$ on the \struc{edge graph} $G_E=(V,E)$. These properties can be modeled by the following system of equations with variables $(\Vector{p},\Vector{a})$
\begin{eqnarray}
	\label{eq:polynomial-constraints}
\nonumber 	||\Vector{p}_u-\Vector{p}_v||&=&||p(u)-p(v)||\quad~~~\,\,\text{ for all }uv\in E,\\
	0&=&{\Vector{a}_\sigma}^\top\cdot (\Vector{p}_u-\Vector{p}_v)\quad~\,\, \text{ for all }\sigma\in F \text{ and all } u,v\in \sigma \text{ and}\\
	\nonumber 1&=&||{\Vector{a}_\sigma}|| \quad\quad\quad\quad\quad\quad\, \text{ for all }\sigma\in F.
\end{eqnarray}
Note that we implicitly model the facet normals as additional vertices which are constrained to the unit sphere. We characterize the corresponding space of trivial infinitesimal flexes in the following way: Consider a complete bar-joint framework on the set of vertices $V$ with realization $p$ and a complete coned bar-joint framework on the set of facet normals indexed by $F$ with cone point in the origin and realization $a$. Doing so ensures that the realization $p$ may only get rotated and translated, while $a$ may only get deformed in accordance with $p$, but not individually. In addition, we add the facet planarity constraints from the above system for all facets $\sigma \in F$. The only possible degrees of freedom are  Clearly, this is a subvariety of the algebraic variety cut out by the equations in the constraint system (\ref{eq:polynomial-constraints}), which has the same dimension as the group of 3-dimensional Euclidean transformations.

Even the rigidity properties of the Platonic solids under these constraints are not completely understood. Although it has already been known for 200 years that the tetrahedron, octahedron and icosahedron are infinitesimally rigid due to Cauchy's Theorem \cite{cauchytheorem} and despite the cube is known to be flexible as a Zonotope, the regular dodecahedron has only been announced to be (second-order) rigid rather recently (cf.\ \cite{himmelmannschulzewinter2025rigiditypolytopesedgelength}).
In the context of this article, a regular dodecahedron may either be instantiated via the \juliainline{Polytope} class or by using the following code, which accesses the internal database of geometric constraint systems:

\begin{julia}
Dod = Dodecahedron()
\end{julia}
Generic realizations of convex polytopes are rigid (cf.\ \cite{himmelmannschulzewinter2025rigiditypolytopesedgelength}). Nevertheless, there are known instances of flexible polytopes, such as Bricard's octahedron (cf. \cite{raoulbricard}) or even the regular cube. Understanding the possible deformations of polytopes thus provides an intriguing research question.

\begin{figure}[h!]
	\begin{minipage}{0.35\linewidth}
		\includegraphics[width=\linewidth]{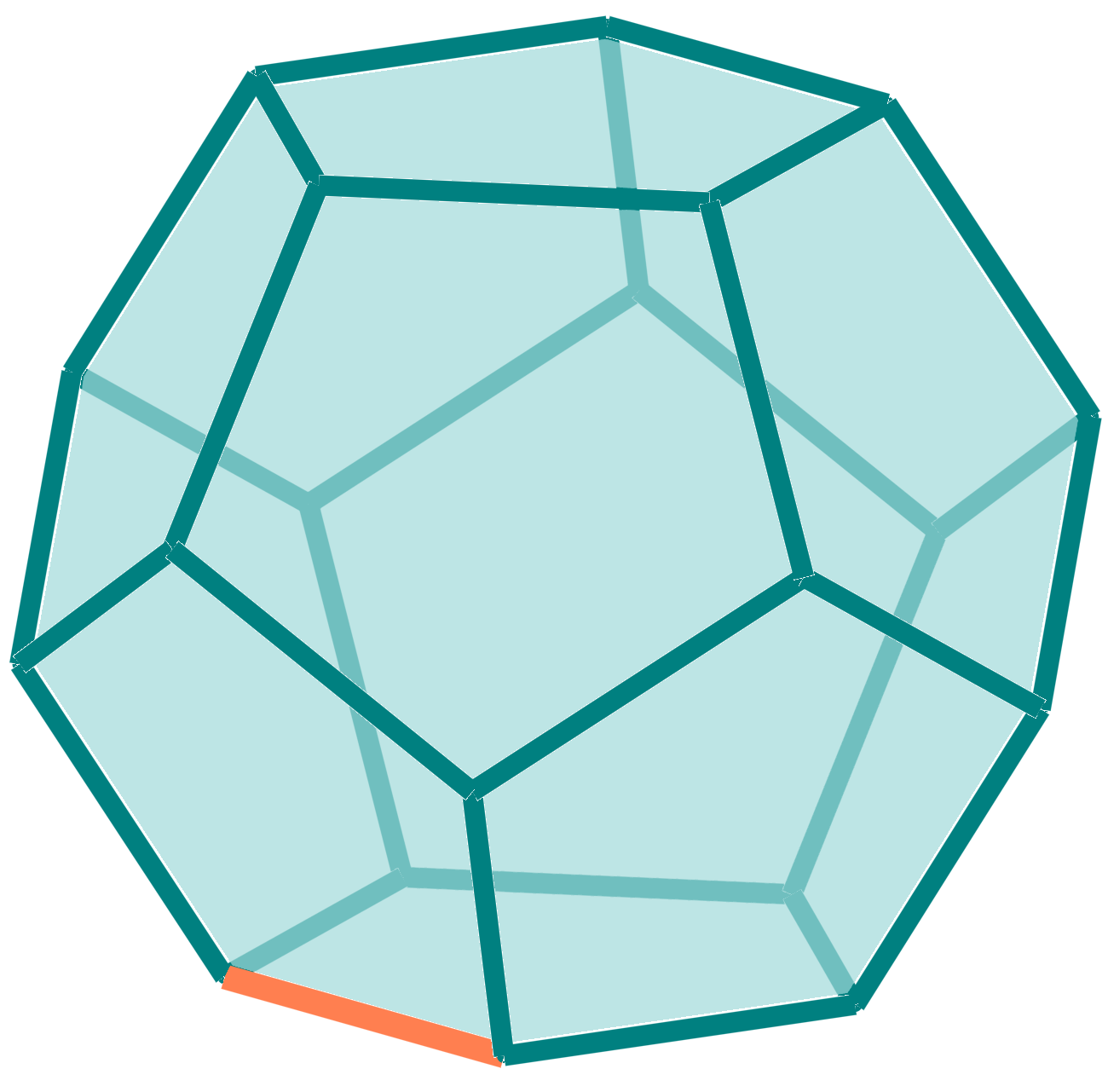}
	\end{minipage}\hspace*{14mm}
	\begin{minipage}{0.4\linewidth}
		\includegraphics[width=\linewidth]{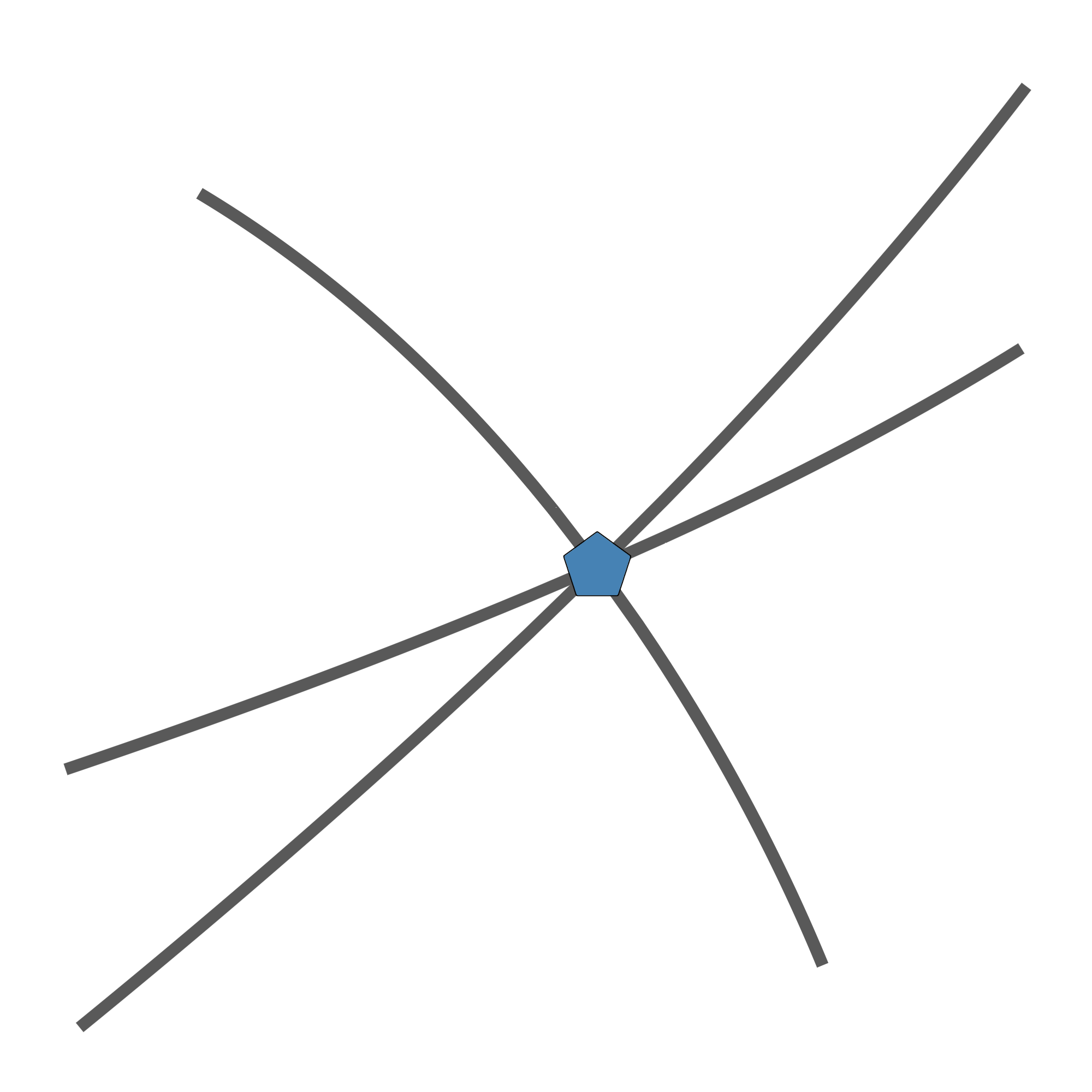}
	\end{minipage}
	\caption{\textbf{(left)} The regular dodecahedron is second-order rigid (cf. \cite{secondorderpolytoperigid}). Nevertheless, we can consider continuous motions induced by contracting edges. The Platonic solids are edge-transitive, so all edges are combinatorially equivalent. Here, we contract the length of the coral edge.  \textbf{(right)} The associated \textcolor{NiceBlue}{\textit{edge length perturbation space}} gives rise to all sufficiently close realizations corresponding to contractions and expansions of edge lengths. We randomly project the curves resulting from contracting the highlighted edge to $\mathbb{R}^2$. This space contains a singularity in the regular dodecahedron; nevertheless, it gives rise to 3 continuous motions that can be smoothly matched (cf. \cite{secondorderpolytoperigid}).}
	\label{fig:regular-dodec}
\end{figure}
Furthermore, even though the regular dodecahedron is rigid, it can be deformed by contracting one of its edges (cf. \cite{secondorderpolytoperigid}) through a parametrization in terms of $t$ of the equation corresponding to the edge $u'v'\in E$ via
\begin{eqnarray*}	||\Vector{p}_{u'}-\Vector{p}_{v'}||~=~t
\end{eqnarray*}
with start value $t_0=||p(u')-p(v')||$. By decreasing the value of $t$, this parametrizes a continuous motion of the otherwise rigid polytope with one edge of variable length.

In the Julia package \juliainline{DeformationPaths.jl}, this can be achieved by calling
\begin{julia}
	D = DeformationPath_EdgeContraction(Dod, [9, 10], 0.75)
\end{julia}
on the previously defined regular dodecahedron \juliainline{Dod}. This reduces the length of the edge \juliainline{[1,2]} of \juliainline{Dod} to $75\%$ of its original length. Naturally, the same code can be used for computing continuous motions induced by extensions of this edge. One frame from the deformation \juliainline{D} is depicted in Figure \ref{fig:tetrahedron-and-bodyhinge}

\paragraph{Body-Hinge Frameworks}
Similar to polytopes, \struc{body-hinge frameworks} in $\mathbb{R}^d$ are given by arrangement of flat $(d-1)$-dimensional facets in space. The main difference is that the facets here are rigid and are connected by \struc{hinges}, which forces two adjacent bodies to share a common $(d-2)$-dimensional facet so that the bodies can only move relative to each other by performing a rotation around this facet. For instance, in $\mathbb{R}^3$ two 2-dimensional facets can rotate around a common edge.

The combinatorial data of a body-hinge framework is given by a hypergraph $G=(V,F)$, where the facets or rigid bodies in $F$ can consist of at least $d+1$ vertices. We model a rigid body given by the convex hull of $n$ points as a complete graph $K_n$ on $n$ vertices. It is known that $K_n$ is universally rigid, meaning that it is rigid in any dimension \cite{universalrigidity}, implying that it is suitable for this purpose. The corresponding geometric constraint system thus consists of the hypergraph $G$, a realization $p:V\rightarrow \mathbb{R}^d$ and polynomial constraints of the form

\begin{eqnarray*}
	||\Vector{p}_u-\Vector{p}_v||\,=\,||p(u)-p(v)||\quad\quad\text{ for all }\sigma\in F \text{ and }uv\in \binom{\sigma}{2}.
\end{eqnarray*}
These constraints should remind the reader of the definition of a bar-and-joint framework, as they represent a special case.

\begin{figure}[h!]
	\begin{minipage}{0.375\linewidth}
		\includegraphics[width=\linewidth]{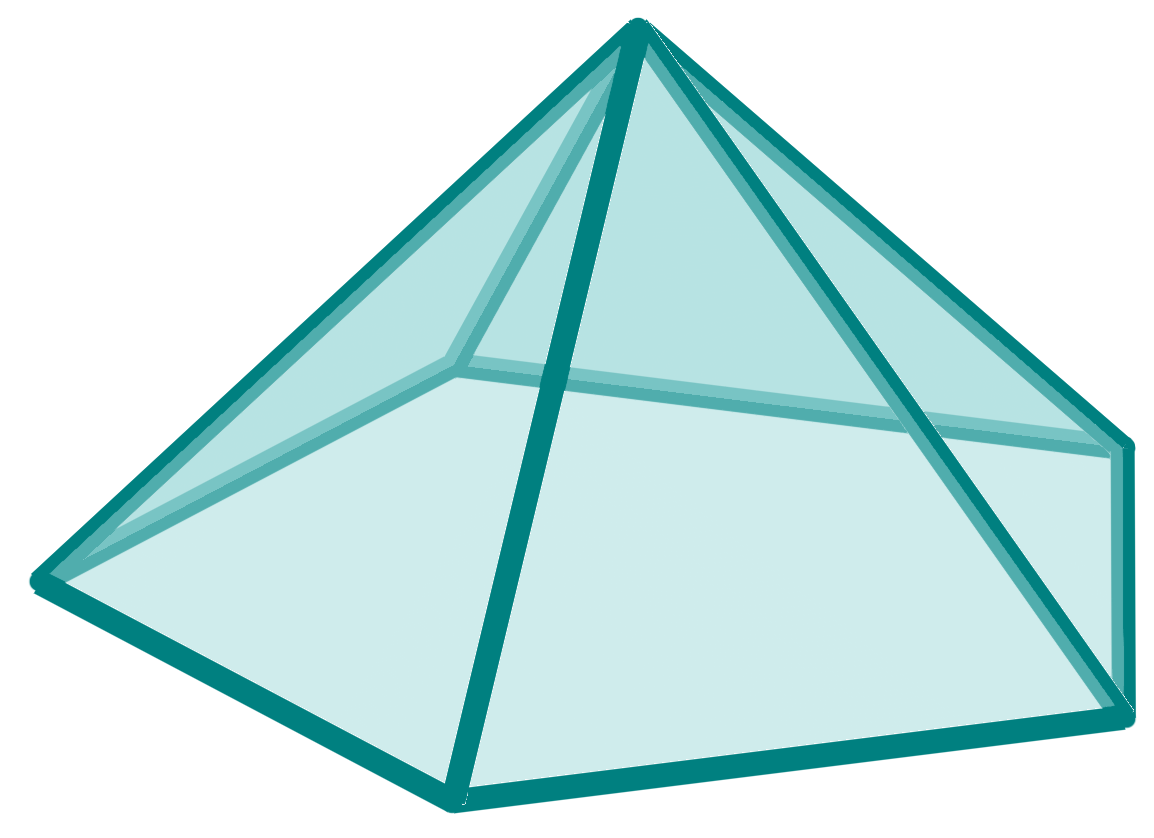}
	\end{minipage}\hspace*{21mm}
	\begin{minipage}{0.325\linewidth}\includegraphics[width=\linewidth]{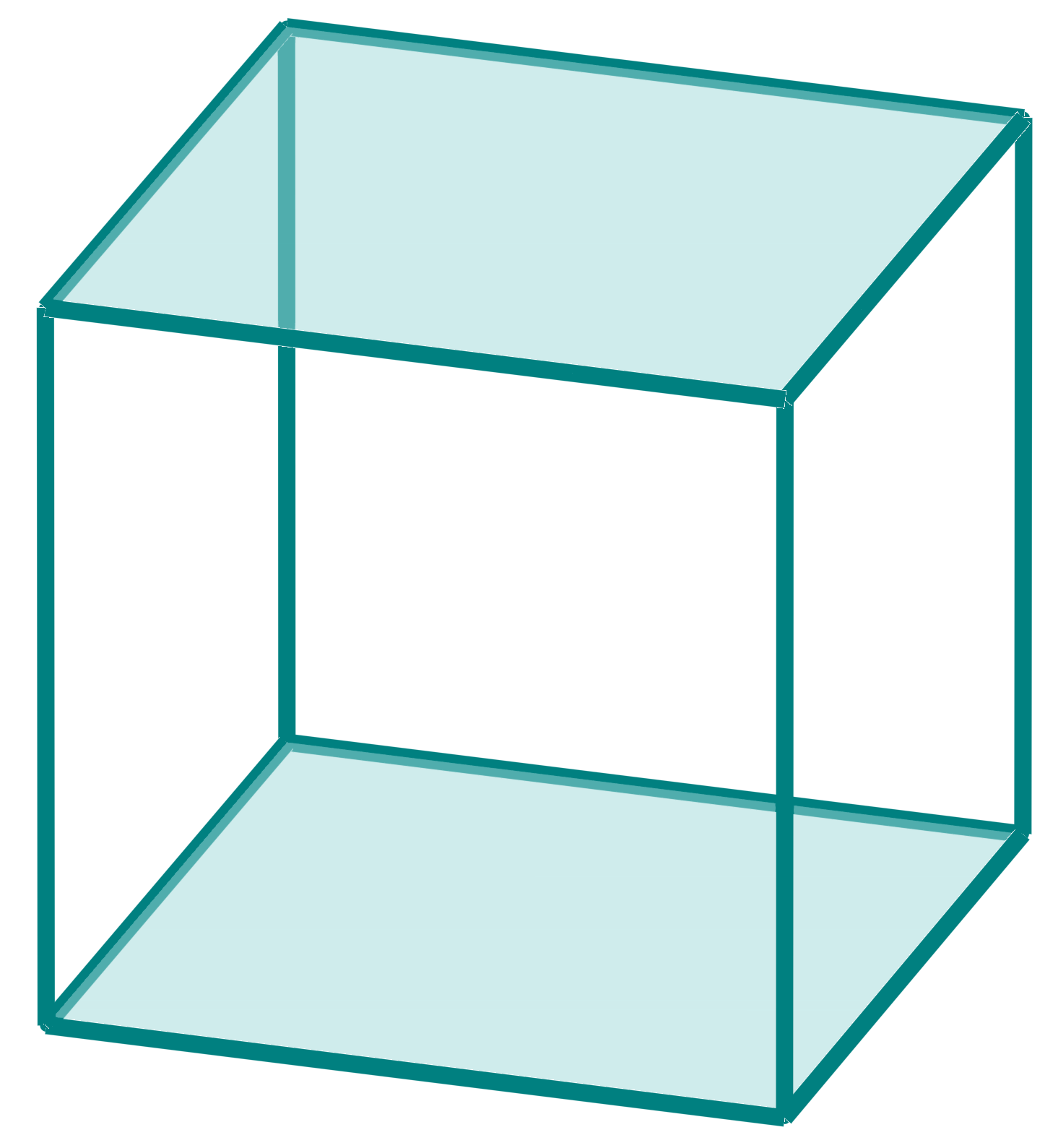}
	\end{minipage}
	\caption{\textbf{(left)} Body-hinge framework consisting of five triangular panels that are arranged around a single vertex in the shape of a pentagonal pyramid. \textbf{(right)} Body-bar framework of two panels that are connected by 4 bars in the shape of a regular cube.}
	\label{fig:tetrahedron-and-bodyhinge}
\end{figure}

In the Julia package, we can construct a body-hinge framework consisting of six triangular panels that are arranged in a pentagonal pyramid around a single vertex (cf. Figure \ref{fig:tetrahedron-and-bodyhinge}(l.)) by calling
\begin{julia}
F = BodyHinge(
	[[1,2,3],[1,3,4],[1,4,5],[1,5,6],[1,6,2]],
	[0 cos(2*pi/5) cos(4*pi/5) cos(6*pi/5) cos(8*pi/5) cos(10*pi/5); 
	0 0 sin(2*pi/5) sin(4*pi/5) sin(6*pi/5) sin(8*pi/5) sin(10*pi/5); 
	1 0 0 0 0 0;]
)
\end{julia}
\paragraph{Body-Bar Frameworks}
\ \struc{Body-bar Frameworks} are slightly more general geometric constraint systems than body-hinge frameworks: they not only allow hinge-like connections between rigid bodies, but also edges \cite{bodybarframeworks}. The combinatorial structure is given by a triple $G=(V,E,F)$ so that the edges satisfy $E\subset\binom{V}{2}$ and the facets again each contain at least 3 vertices. The corresponding geometric constraint system is thus given by the the combinatorial data $G$, a realization $p:V\rightarrow \mathbb{R}^d$ and the polynomial constraints
\begin{eqnarray*}
	||\Vector{p}_u-\Vector{p}_v||\,&=&\,||p(u)-p(v)||\quad\quad\text{ for all }\sigma\in F \text{ and }uv\in \binom{\sigma}{2} ~\text{ and }\\[-1.5mm]	||\Vector{p}_u-\Vector{p}_v||\,&=&\,||p(u)-p(v)||\quad\quad\text{ for all }uv\in E.
\end{eqnarray*}
A body-bar framework consisting of two square panels, whose vertices are connected by 4 bars can be instantiated via
\begin{julia}
F = BodyBar(
	[[1,5],[2,6],[3,7],[4,8]], [[1,2,3,4],[5,6,7,8]],
	[0 1 1 0 0 1 1 0; 0 0 1 1; 0 0 1 1; 0 0 0 0 1 1 1 1;]
)
\end{julia} 
and is depicted in Figure \ref{fig:tetrahedron-and-bodyhinge}(r.).

\subsection{Sticky Sphere Packings}
\label{section:sticky-sphere-packings}

\begin{figure}[t!]
	\begin{minipage}{0.44\linewidth}
		\includegraphics[width=\linewidth]{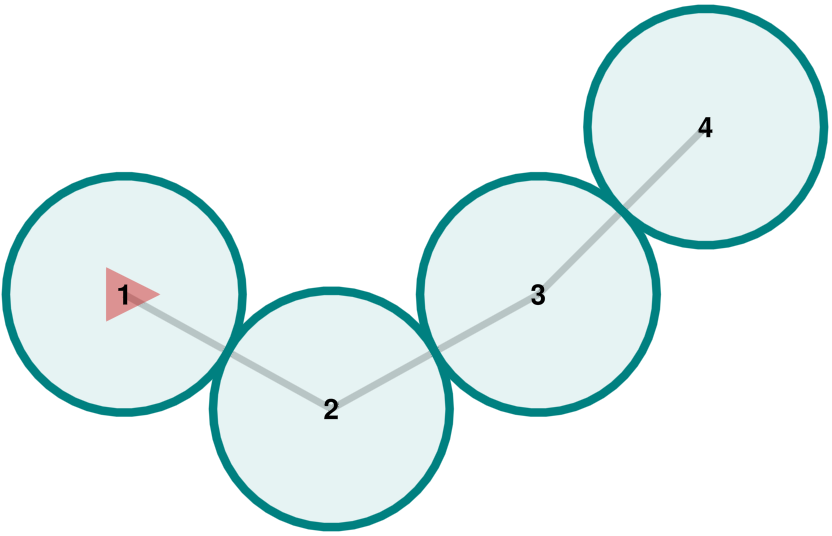}
	\end{minipage}\hspace*{17mm}
	\begin{minipage}{0.335\linewidth}
		\includegraphics[width=\linewidth]{Images/hypostaticspherepacking.png}
	\end{minipage}
	\caption{\textbf{(left)} A sticky circle packing with 4 disks that has the 4-bar linkage as a limit case. \textbf{(right)} Rigid sphere packing with fewer contacts than expected.}
	\label{fig:sphere-packings}
\end{figure}

For our purposes, a collection of identical hyperspheres of radius $r>0$ is called a \struc{hypersphere packing in $\mathbb{R}^d$} if the pairwise distances between the hyperspheres' centers is at least $2r$. Typically, we consider $d=2$, in which case we call this object a \struc{circle packing}, and $d=3$, in which case we refer to it as a \struc{sphere packing}. We call a packing of spherical objects \struc{sticky} if whenever two hyperspheres are touching -- implying that their centers have distance $2r$ -- they stay connected. In the real world, this stickiness models cohesive oder adhesive forces which can occur through electrostatics.

This problem can be modeled as a bar-joint framework. To place the spherical objects in $\mathbb{R}^d$, we consider their centers as the vertex set $V$ and embed them via the realization $p:V\rightarrow \mathbb{R}^d$. We additionally assume that for no two vertices $i\neq j\in V$ it holds that $||p(i)-p(j)||$. In other words, no two spherical objects are overlapping. The edge set is then defined by all connections that have a radius of exactly $2r$ (up to numerical imprecisions): \[E=\left\{uv\in \binom{V}{2}\,:\, ||p(u)-p(v)||=2r\right\}.\]
These edges provide the polynomial constraints for the corresponding geometric constraint system. Contrary to the other geometric constraint systems that we have covered so far, there is a peculiarity here: The constraints may change throughout a continuous motion. If during a continuous motion $\alpha:[0,1]\rightarrow\mathbb{R}^{dn}$ any of the spherical objects' centers become closer than $2r$, we momentarily stop the continuous motion computation. Subsequently, these disks ``stick together'' by creating an additional edge between these spherical objects. With this new geometric constraint system, we continue the computation of the continuous motion. Notably, the resulting continuous motion will also be a continuous motion of the original geometric constraint system, since new edges correspond to adding equations. 
To generate a circle packing with $n=4$ and $r=1$, we can run

\begin{julia}
F = SpherePacking(
	[1,1,1,1], 
	[0 7/4 7/2 7/2+sqrt(2); 0 -sqrt(15/16) 0 sqrt(2)];
	pinned_vertices=[1]
)
\end{julia}

This code pins the first vertex of the geometric constraint system in place and automatically generates the necessary edges. The specific sphere packing \juliainline{F} constructed by this code has a 2-dimensional space of nontrivial infinitesimal flexes, has the 4-bar linkage as a limit case and is depicted in Figure \ref{fig:sphere-packings}(l.).

Beyond 2-dimensional circle packings, we can also use the codebase to visualize 3-dimensional sphere packings. In Figure \ref{fig:sphere-packings}(r.), a \struc{hypostatic} sphere packing is depicted. Its coordinates are taken from Holmes-Cerfon, Theran and Gortler \cite{almostrigidity}. The term ``hypostatic'' describes a rigid sphere packing which has fewer contacts than anticipated. Assuming it was instantiated as an object called \juliainline{H3}, we can check that it is not infinitesimally rigid by calling \juliainline{is_inf_rigid(H3)}. Yet, it is rigid, which we can be verified by calling either \juliainline{is_second_order_rigid(H3)} or \juliainline{is_rigid(H3)}.

\subsection{Volume-Constrained Simplicial Complexes}
It is possible to generalize the idea of constraining the length of bars in bar-joint frameworks to constraining the volume of simplices in simplicial complexes. We start with a simple $(d+1)$-uniform hypergraph $G=(V,E)$, so that $E\subseteq {\binom{V}{d}}$. We label the vertices as $1,\dots,n$ and the hyperedges in increasing order; i.e.\ , $\sigma=i_1\dots i_{d+1}$ for $1\leq i_1\leq\dots\leq i_{d+1}\leq n$ for any  $\sigma\in E$.

$G$ can be interpreted as a pure $d$-dimensional (abstract) simplicial complex if we ignore lower-dimensional faces. Therefore, \struc{volume-constrained simplicial complexes} are given by a simple $(d+1)$-uniform hypergraph $G$, a realization $p:V\rightarrow \mathbb{R}^d$ and polynomial constraints of the form
\begin{align*}
	\det\begin{pmatrix}
		\Vector{p}_{\sigma_1} & \dots & \Vector{p}_{\sigma_{d+1}}\\
		1 & \dots & 1
	\end{pmatrix}~=~\,\det\underbrace{\begin{pmatrix}
			p(\sigma_1) & \dots & p(\sigma_{d+1})\\
			1 & \dots & 1
	\end{pmatrix}}_{\in\, \mathbb{R}^{(d+1)\times(d+1)}}\quad~~\text{ for all }\sigma\in E.
\end{align*}
These polynomials fix the volume of each $d$-simplex according to the given realization $p$ (cf. \cite{BULAVKA2025189}). 

The trivial infinitesimal flexes of a volume-constrained simplicial complex are exactly given by the infinitesimal flexes of the complete volume-constrained simplicial complex on the complete $(d+1)$-uniform hypergraph with $n$ vertices and realization $p:V\rightarrow \mathbb{R}^d$. Contrary to the case of bar-joint frameworks, where there are $d(d+1)/2$ trivial infinitesimal flexes, there are $d^2+d-1$ in this case (cf. \cite{BULAVKA2025189}), which is not lower than in the framework case. Aside from rotations, reflections and translations, the volume-preserving transformations of $\mathbb{R}^d$ additionally contain shears. In addition, the constraints are not given by quadratic polynomials when $d\geq 3$, so this geometric constraint system does not satisfy either assumption of Definition \ref{def:constraint-system-in-Rd}. 

Regardless, volume-constrained hypergraphs are included in the package. By pinning the position of the vertices of a $d$-simplex $\sigma\in E$, we can remove $d(d+1)$ degrees of freedom and consequently factor out the trivial infinitesimal flexes. According to Remark \ref{remark:quadraticpolynomials}, it is unclear whether we can use equilibrium stresses to block certain infinitesimal flexes. Therefore, the functionality derived from the second-order theory of geometric constraint systems (cf. Definition \ref{def:stress-and-second-order-rigidity}) is currently unavailable in this case.

\begin{figure}
	\centering
	\includegraphics[width=0.375\linewidth]{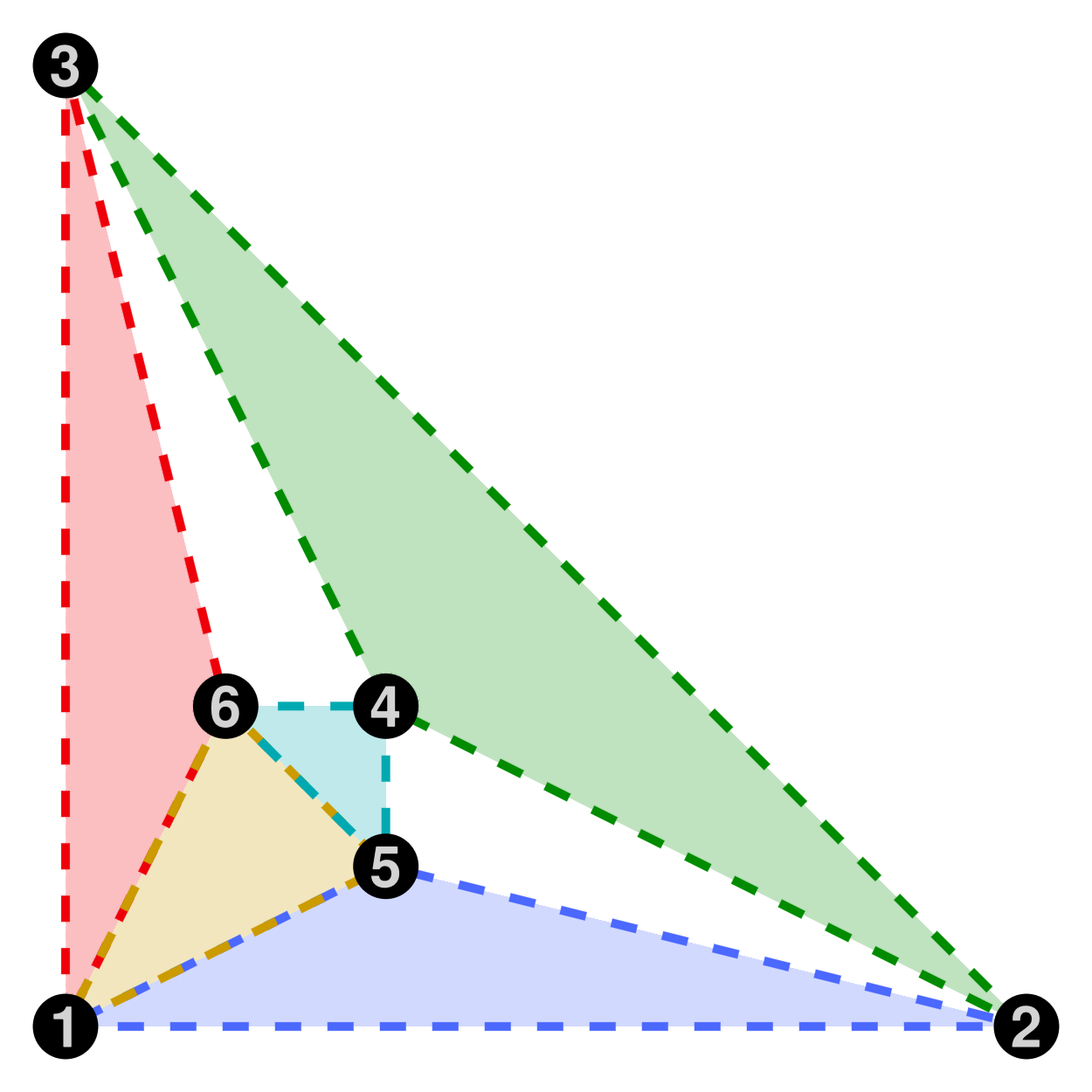}\hspace*{8mm}
	\includegraphics[width=0.415\linewidth]{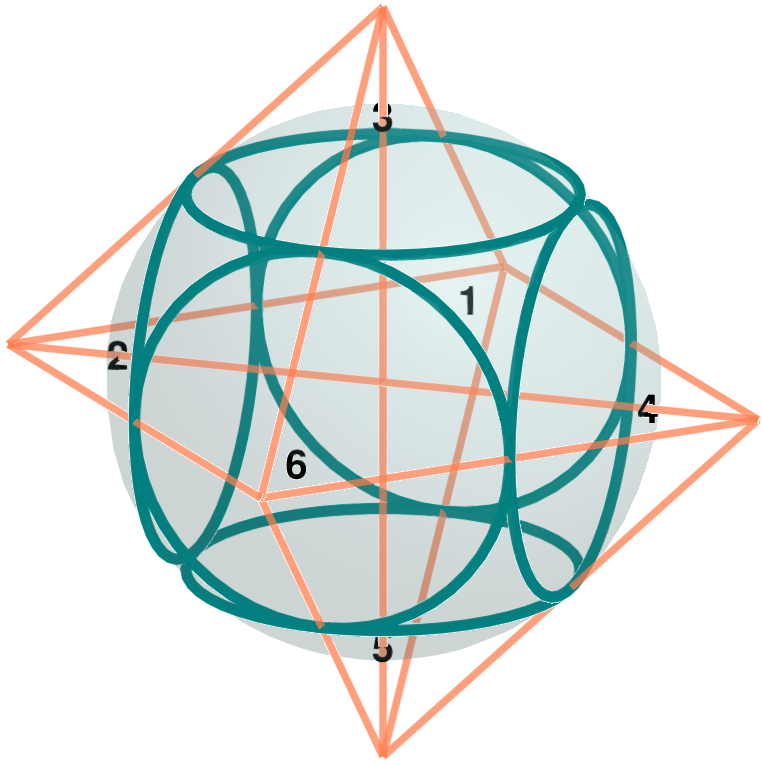}
	\caption{\textbf{(left)} When removing two faces from the octahedral hypergraph, it becomes flexible. \textbf{(right)} Flexible spherical disk packing obtained from Bricard's octahedron. The sphere packing is colored in teal, while the corresponding labeled contact graph is marked in coral.}
	\label{fig:volume-constrained-hypergraph}
\end{figure}

As an instructive example, let us consider an octahedral hypergraph in $\mathbb{R}^2$ with two facets removed. In the package \juliainline{DeformationPaths.jl}, we can construct a realization by calling
\begin{julia}
F = VolumeHypergraph(
	[[1,3,6], [1,2,5], [2,3,4], [1,5,6], [6,4,5]], 
	[0 3 0 1 1 0.5; 0 0 3 1 0.5 1];
	pinned_vertices = [4,5,6]
)
\end{julia}
which pins the interior triangle. This geometric constraint system is flexible and is depicted in Figure \ref{fig:volume-constrained-hypergraph}(l.).

\subsection{Spherical Circle Packings}

Similar to Section \ref{section:sticky-sphere-packings}, assume that we are given a circle packing. However, instead of considering a circle packing in $\mathbb{R}^d$, we constrain it to the unit sphere $\mathbb{S}^2$. Also, we consider the \struc{inversive distance} as the measurement map on $\mathbb{S}^2$ instead of the Euclidean norm. The inversive distance between two spherical caps on $\mathbb{S}^2$ with centers $\Vector{m}_1\in \mathbb{S}^2$ and $\Vector{m}_2\in \mathbb{S}^2$ and geodesic radii $r_1$ and $r_2$ is given by (cf. \cite{inversivedistance})
\[I(\Vector{m}_1,\Vector{m}_2,r_1,r_2)\, =\, \frac{-\langle \Vector{m}_1,\Vector{m}_2\rangle+\cos(r_1)\cos(r_2)}{\sin(r_1)\sin(r_2)}.\]
The above expression for the inversive distance is not algebraic. To turn it into an algebraic expression, we consider the circles to be intersections of $\mathbb{S}^2$ with affine hyperplanes instead of circles in space. Take a single circle with center $\Vector{m}$ and geodesic radius $r>0$ on $\mathbb{S}^2$. We parametrize the corresponding affine hyperplane by projective coordinates \mbox{$[a:b:c]\in \mathbb{R}\mathbb{P}^2$} and $d\in \mathbb{R}$ with the implicit equation $ax+by+cz=d$. Consequently, the circle's center lies on the ray $t\cdot (a,b,c) \in \mathbb{R}^3$ for some $t\in \mathbb{R}$. We can then normalize the parameters $a,b,c,d$ with $\sqrt{a^2+b^2+c^2}\neq 0$ so that the plane's normal has unit length. Therefore, it holds that \mbox{$(a,b,c)=\Vector{m}$}. Using the chord length formula on the unit sphere, we can compute $d^2=1-\sin(\frac{r}{2})^2=\cos(\frac{r}{2})^2$ so that $d\in [-1,1]$. According to Bowers \cite{bowers2020proofkoebeandreevthurstontheoremflow}, the inversive distance of two circles given by $\Vector{D}_1=(a_1,b_1,c_1,d_1)\in \mathbb{S}^2\times [-1,1]$ and $\Vector{D}_2=(a_2,b_2,c_2,d_2)\in \mathbb{S}^2\times [-1,1]$ satisfies 
\[I(\Vector{D}_1,\Vector{D}_2)\,=\, \frac{\langle \Vector{D}_1,\Vector{D}_2\rangle_{3,1}}{\sqrt{\big\lvert \langle \Vector{D}_1,\Vector{D}_1\rangle_{3,1}\cdot \langle \Vector{D}_2,\Vector{D}_2\rangle_{3,1} \big\lvert}}\]
with the \struc{Lorentz inner product} 
\[\langle(a_1,b_1,c_1,d_1), \, (a_2,b_2,c_2,d_2)\rangle_{3,1}\,=\, a_1a_2+b_1b_2+c_1c_2-d_1d_2.\]
Still, the value of $I(\Vector{D}_1,\Vector{D}_2)$ is undefined whenever $\langle \Vector{D}_i,\Vector{D}_i\rangle_{3,1}=0$ for $i\in \{1,2\}$. This exactly occurs when $d_i=\pm 1$, where the radius of the respective spherical cap is $0$. In this case, we set the inversive distance to $\infty$, provided that $\Vector{D}_1\neq \Vector{D}_2$. We set $I(\Vector{D}_i,\Vector{D}_i)=0$, when the circles are equal. In addition, we assume without loss of generality that $d_i\geq 0$. This chooses a positive orientation for the circle and ensures that $\langle \Vector{D}_i,\Vector{D}_i\rangle_{3,1}\geq 0$. In geometric terms, we choose the smaller of the two possible spherical caps in this way. Since we only consider the unoriented circle and not the entire spherical cap, this choice is no restriction.

If the inversive distance between two circles is equal to $\pm 1$, then the circles are tangent.
We prescribe the combinatorial data associated with this geometric constraint system as a labeled contact graph $G=(V,E,I:E\rightarrow \mathbb{R}_{>0})$ which prescribes the (absolute) inversive distances between pairs of circles. The vertices are then given by the implicit description of the affine hyperplane which cuts it out, giving rise to a realization $p:V\rightarrow \mathbb{S}^2\times \mathbb{R}\subseteq \mathbb{R}^4$. This produces the polynomial constraint system
\begin{eqnarray*}
	\frac{{\langle \Vector{p}_u,\Vector{p}_v\rangle_{3,1}}^2}{\langle \Vector{p}_u,\Vector{p}_u\rangle_{3,1} \cdot \langle \Vector{p}_v,\Vector{p}_v\rangle_{3,1}} &=&\frac{{\langle p(u),p(v)\rangle_{3,1}}^2}{\langle p(u),p(u)\rangle_{3,1} \cdot \langle p(v),p(v)\rangle_{3,1}}\quad\quad\text{ for all }uv\in E\text{ and}\\[1mm]
	1&=&{\Vector{p}_{u,1}}^2+{\Vector{p}_{u,2}}^2+{\Vector{p}_{u,3}}^2\quad\quad\quad\quad\quad\quad\text{ for all }u\in V.
\end{eqnarray*}
Analogous to the construction of non-convex simplicial polyhedra by Bricard \cite{raoulbricard}, there exists a construction of flexible sphere packings (e.g.\ \cite{inversivedistance}). We can construct the analogue of Bricard's octahedron (see Figure \ref{fig:framework-introductory-pic}(l.)) as an instance of \juliainline{DeformationPaths.jl} by calling
\begin{julia}
F = SphericalDiskPacking(
	[(1,2), (1,3), ..., (4,6), (5,6)],
	[1 ... -1; 0 ... 0; 0 ... 0; 1/sqrt(2) ... 1/sqrt(2)]; 
	pinned_vertices=[1]
)
\end{julia}
The resulting flexible configuration alongside its labeled contact graph is depicted in Figure \ref{fig:volume-constrained-hypergraph}(r.).

\end{document}